\documentclass{amsart}

\usepackage{amsmath}
\usepackage{amssymb}
\usepackage{amsthm}
\usepackage[english]{babel}
\usepackage{bm}
\usepackage[margin=0.5in]{caption}
\usepackage{enumitem}
\usepackage[T1]{fontenc}
\usepackage{graphicx}
\usepackage[hidelinks]{hyperref} \usepackage[capitalise]{cleveref}
\usepackage{indentfirst}
\usepackage[utf8]{inputenc}
\usepackage{layout}
\usepackage{listings}
\usepackage{lscape}
\usepackage{mathrsfs}
\usepackage{mathtools}
\usepackage{nameref}
\usepackage{setspace}
\usepackage{textcomp}
\usepackage{thmtools}
\usepackage{xcolor}
\usepackage{xparse}

\usepackage{tikz}
\usepackage{tikz-cd}

\title{Spatial cube complexes}
\author{Adrien \textsc{Abgrall}}
\date{\today}

\begin{document}

\pagestyle{plain}
\frenchspacing
\parindent=15pt
\theoremstyle{plain}
\newtheorem{thm}{Theorem}[section]
\newtheorem{lm}[thm]{Lemma}
\newtheorem{pro}[thm]{Proposition}
\newtheorem{cor}[thm]{Corollary}
\newtheorem{conj}[thm]{Conjecture}
\newtheorem*{csvdefi}{Definition}
\newtheorem*{csvlm}{Lemma}
\newtheorem*{csvcor}{Corollary}
\newtheorem*{csvthm}{Theorem}
\newtheorem{innercustomthm}{Theorem}
\newenvironment{customthm}[1]
  {\renewcommand\theinnercustomthm{#1}\innercustomthm}
  {\endinnercustomthm}
\newtheorem{innercustomcor}{Corollary}
\newenvironment{customcor}[1]
  {\renewcommand\theinnercustomcor{#1}\innercustomcor}
  {\endinnercustomcor}
\theoremstyle{definition}
\newtheorem{defi}[thm]{Definition}
\newtheorem{rem}[thm]{Remark}
\theoremstyle{remark}
\newtheorem{q}[thm]{Question}
\newtheorem{ex}[thm]{Example}
\newtheorem{nota}[thm]{Notation}
\newtheorem*{ack}{Acknowledgments}

\newcommand{\NN}{\mathbb{N}}
\newcommand{\ZZ}{\mathbb{Z}}
\newcommand{\QQ}{\mathbb{Q}}
\newcommand{\RR}{\mathbb{R}}
\newcommand{\CC}{\mathbb{C}}
\renewcommand{\SS}{\mathbb{S}}
\newcommand{\TT}{\mathbb{T}}
\newcommand{\HH}{\mathbb{H}}
\newcommand{\FF}{\mathbb{F}}

\newcommand{\cat}{\mathrm{CAT}(0)}
\newcommand{\s}{\mathrm{Sep}}
\newcommand{\out}{\mathsf{Out}}
\newcommand{\aut}{\mathsf{Aut}}
\newcommand{\inn}{\mathsf{Inn}}
\newcommand{\unt}{\mathcal{U}}
\newcommand{\uaut}{\mathcal{U}\mathsf{Aut}}
\renewcommand{\H}{\mathfrak{H}}
\newcommand{\K}{\mathfrak{K}}
\newcommand{\W}{\mathfrak{W}}
\renewcommand{\emptyset}{\varnothing}

\newcommand{\n}[1]{\left|\left|\,#1\,\right|\right|}
\newcommand{\floor}[1]{\left\lfloor #1 \right\rfloor}
\newcommand{\ceil}[1]{\left\lceil #1 \right\rceil}
\newcommand{\gen}[1]{\left\langle #1 \right\rangle}
\newcommand{\card}[1]{\left | #1 \right |}
\newcommand{\gl}{GL}
\newcommand{\uc}[1]{\widetilde{#1}}
\newcommand{\USS}{\uc{\SS}}
\newcommand{\sd}{\mathbin{\triangle}}

\newcommand{\bin}[1]{}
\newcommand{\todo}[1]{{\color{red}#1}}

\begin{abstract}
We provide a new geometric characterization for the spine of untwisted outer space of a right-angled Artin group, constructed by Charney, Stambaugh, and Vogtmann. We realize the spine as the natural simplicial complex associated with the category of \emph{spatial cube complexes}, a new class of locally \texorpdfstring{$\cat$}{CAT(0)} cube complexes with a simple definition in terms of hyperplane collapses.
\end{abstract}

\maketitle

\section{Introduction}

$\cat$ cube complexes have been introduced by Gromov (\cite{gromov}) as a combinatorial model for non-positive curvature. Since then, the study of groups acting by isometries on $\cat$ cube complexes, or discrete wallspaces (\cite{ccc}, \cite{wallspace}) has become an important topic in geometric group theory. Notably, the work of Haglund and Wise on \emph{special cube complexes} (\cite{special}) has had far-reaching consequences, emphasizing the strong links between $\cat$ cube complexes and right-angled Artin groups. These groups form a natural class of finitely presented groups stable under free product and direct product, containing both finite-rank free and free abelian groups. They are the fundamental groups of a class of special cube complexes called the \emph{Salvetti complexes}.

One of the first known results about automorphism groups of right-angled Artin groups is due to Laurence (\cite{laurence}) and Servatius (\cite{servatius}). They found a finite generating set for the group $\out(A)$ of outer automorphisms of any right-angled Artin group $A$, closely resembling the Nielsen generators of $\out(F_n)$, the outer automorphism group of a free group. This generating set can contain \emph{twists}: automorphisms related to transvections in $GL_n(\ZZ) = \out(\ZZ^n)$. However, the \emph{untwisted subgroup} $\unt(A)\leq \out(A)$, the subgroup generated by all the Laurence-Servatius generators except the twists, appears very similar to $\out(F_n)$. It is worth noting that in many right-angled Artin groups, such as free groups themselves, $\unt(A) = \out(A)$, and that, in general, the untwisted subgroup was characterized by Fioravanti (\cite{coarsemedian}) as the group of automorphisms preserving coarsely a natural median structure on $A$.

Drawing on this similarity, and the definition due to Culler and Vogtmann (\cite{cullervogtmann}) of the outer space for free groups, Charney, Stambaugh, and Vogtmann (\cite{untwistedos}) defined a \emph{spine of untwisted outer space} for any right-angled Artin group $A$. This spine is a contractible finite-dimensional simplicial complex $K$ endowed with a properly discontinuous and cocompact action of $\unt(A)$ by isometries. It provides an interesting geometric model for the group and allows for explicit computation of homological invariants. The quotient of $K$ by a torsion-free subgroup of $\unt(A)$ of finite index is a classifying space for this subgroup, recovering the fact that $\unt(A)$ is finitely presented (which was originally due to Day in \cite{daypresentation}, where $\unt(A)$ is designated as the subgroup generated by \emph{long-range automorphisms}).

Like for the Culler-Vogtmann spine, $K$ is defined as a set of actions of $A$ on a certain class of $\cat$ cube complexes, and $\unt(A)$ acts by precomposition with the action. This particular class of $\cat$ cube complexes is described in \cite{untwistedos} as obtained from the universal cover of the Salvetti complex by a \emph{blow-up} procedure. The definition is entirely explicit, but highly technical.

\bigskip

In this paper, we provide a new description of the Charney-Stambaugh-Vogtmann spine of untwisted outer space $K$. This description is based on combinatorial properties of hyperplane collapses in $\cat$ cube complexes, which appear naturally from the viewpoint of groups acting on discrete wallspaces.
\begin{defi}[see Definitions~\ref{defcollapse} and \ref{collapses}]
    Let $G$ be act by combinatorial isometries on a $\cat$ cube complex $Z$, inducing an action of $G$ on the set $\W$ of hyperplanes of $Z$. The \emph{collapse} of a $G$-invariant family $\H\subseteq\W$ of hyperplanes of $Z$ is a $G$-equivariant cellular map of $\cat$ cube complexes with domain $Z$ obtained by collapsing to a point each edge dual to a hyperplane of $\H$. This has a simple interpretation in terms of wallspace maps, corresponding to Caprace and Sageev's \emph{restriction quotients} (\cite{rankrigidity}, see Definition~\ref{restriction} and Lemma~\ref{collapse1}).
    
    When $G$ acts without cube inversions, a \emph{strong collapse} is the collapse $c\colon Z\to Y$ of a $G$-invariant family of hyperplanes $\H$ satisfying any of the following equivalent conditions: 
    \begin{enumerate}
        \item For every $H\in \H$, and every $g\in G\setminus Stab(H)$, some hyperplane separating $H$ from $gH$ is not contained in $\H$
        \item For every $H\in \H$ with carrier $K$, the subcomplex $c(K)\subseteq Y$ is disjoint from its distinct $G$-translates
        \item For every $H\in \H$ with carrier $K$, the immersion $c(K)/Stab(H)\to Y/G$ is an embedding.
    \end{enumerate}
    When $G$ acts freely on $Z$ and $Y$, we describe a further equivalent condition in the quotient as follows: Parallel edges of $Z/G$ lying inside the same vertex preimage of the quotient map $Z/G\to Y/G$ are parallel in that vertex preimage (Lemma~\ref{strongdownstairs}).
\end{defi}

This new property of collapses, resembling part of the definition of special cube complexes, appears to be the right tool to characterize the Charney-Stambaugh-Vogtmann blow-ups geometrically as \emph{spatial cube complexes}, defined as follows.
\begin{defi}[see Definitions~\ref{defspatial}, \ref{defuntwisted}, \ref{defcat}]
    Let $A$ be a right-angled Artin group acting freely and cocompactly on a $\cat$ cube complex $Z$. Let $\SS$ be the associated Salvetti complex, with a standard action of $A$ on its universal cover $\uc{\SS}$. The action of $A$ on $Z$ is:
    \begin{itemize}
        \item \emph{Salvetti-like} when the quotient $Z/A$ is isomorphic to $\SS$;
        \item \emph{cospatial} when $Z/A$ cannot be "unsubdivided" (in a sense made precise in Definition~\ref{redundantdef}), and for every hyperplane $H$ of $Z$, there exists a strong collapse with domain $Z$, with Salvetti-like range, and that does not collapse $H$;
        \item \emph{untwisted}, or \emph{coarse-median-preserving}, when there exists vertices $x\in \uc{\SS}$, $z\in Z$ and a constant $N$ such that for $a,b,c\in A$, the unique $g\in A$ such that $gx = \mu_{\uc{\SS}}(ax,bx,cx)$ satisfies $d_Z(gz, \mu_Z(az,bz,cz))\leq N$, where $\mu_{\uc{\SS}}$ and $\mu_Z$ are the medians on the $\cat$ cube complexes $\uc{\SS}$ and $Z$ respectively.
    \end{itemize}
    A \emph{spatial cube complex} is the quotient of a cospatial action of $A$.
\medskip

    Let $\overline{\mathcal{C}}$ be the category with cospatial untwisted actions of $A$ as objects and strong collapses as arrows, considered modulo $A$-equivariant combinatorial isomorphism. This category is naturally associated to a simplicial complex $\mathcal{N}$ (its nerve), where $0$-cells correspond to objects, $1$-cells to arrows, $k$-cells to sequences of $k$ composable arrows.
\end{defi}

We are then able to prove the following reinterpretation.

\begin{customthm}{\ref{newspineth}}
    The group $\unt(A_\Gamma)$ acts on untwisted cospatial actions by precomposition. It acts on $\mathcal{N}$ by combinatorial isomorphisms.

    There exists an $\unt(A_\Gamma)$-equivariant, order-preserving, combinatorial isomorphism between the spine of untwisted outer space $K_\Gamma$ and $\mathcal{N}$.
\end{customthm}

We also prove that every hyperplane collapse $c$ between spatial cube complexes for $A$ is the quotient of a strong collapse (Corollary~\ref{strong}). Hence, any equivariant hyperplane collapse between objects of $\overline{\mathcal{C}}$ defines an arrow of $\overline{\mathcal{C}}$.

\bigskip
To briefly outline the contents of this paper, in Section 2 we recall useful properties of $\cat$ cube complexes and give a short account of the construction of the Charney-Stambaugh-Vogtmann spine. In Section 3, we define and relate several properties of hyperplane collapses in $\cat$ cube complexes with a group action. In Section 4, we investigate further properties of such collapses when the domain or the range is chosen among the Charney-Stambaugh-Vogtmann spine. Finally in Section 5, we use previously defined strong collapses to define the category of spatial cube complexes and prove the main theorem.

\begin{ack}
I gratefully acknowledge support from project ANR-22-CE40-0004 GoFR. Thank you Sam Fisher and Zachary Munro for our inspiring conversations. I am very thankful to my advisor, Vincent Guirardel, for our many enlightening discussions, his patience and his continual support.
\end{ack}

\section{Background}

\subsection{\texorpdfstring{$\cat$}{CAT(0)} cube complexes}

\emph{$\cat$ cube complexes} are famous examples of metric cell complexes coming from work of Gromov (\cite{gromov}) and Sageev (\cite{ccc}). They have many equivalent characterizations as \emph{median graphs} (\cite{median}) or \emph{discrete wallspaces} (\cite{wallspace}). We only recall some of their main features and refer the reader to \cite{richestoraags}, \cite{genevoisbook} for a more thorough approach.

\begin{defi}
    A \emph{cube complex} is a cell complex obtained by gluing together Euclidean cubes (isometric to $[0,1]^n$ for various integers $n$) isometrically along subcubes. It is endowed with a piecewise Euclidean length metric extending the standard metric on each cube. A \emph{$\cat$ cube complex} (resp. \emph{locally $\cat$ cube complex}) is a cube complex where the piecewise Euclidean metric is $\cat$ (resp. \emph{locally $\cat$}). Being locally $\cat$ is equivalent to satisfying \emph{Gromov's link condition}, a combinatorial condition on vertex links (see \cite{bridsonhaefliger}, Chapter II, Theorem~5.5). The $0$-skeleton of a $\cat$ cube complex $X$ is endowed with a different metric, induced by the graph metric on the $1$-skeleton $X^{(1)}$. It is this \emph{combinatorial metric} that we will consider, denoted $d$.
    
    When $X$ is $\cat$, for each triple of vertices $(x_1,x_2,x_3)\in X^{(0)}$ there exists a unique \emph{median} $\mu(x_1,x_2,x_3)\in X^{(0)}$ with the following property: for $i\neq j$, $d(x_i,x_j) = d(x_1,\mu(x_1,x_2,x_3)) + d(\mu(x_1,x_2,x_3), x_j)$.

    When $X$ is locally $\cat$, say two edges $e,f$ of $X$ are \emph{parallel} when there exists a sequence of edges $e=e_0,\dots,e_n=f$ such that for $1\leq i\leq n$, $e_{i-1}$ and $e_i$ are opposite edges in a square of $X$. This is an equivalence relation on edges whose equivalence classes are called the \emph{hyperplanes} of $X$. An edge $e$ and a hyperplane $H$ are \emph{dual} if $e\in H$. The \emph{carrier} (resp. \emph{open carrier}) of $H$ is the union of closed cells (resp. open cells) of $X$ whose closure contains an edge dual to $H$. It is a connected subcomplex (resp. subspace) of $X$. Two hyperplanes of $X$ are \emph{transverse} if their open carriers intersect, or equivalently if they have dual edges that span a square in $X$.

    When $X$ is $\cat$, for every hyperplane $H$ with open carrier $C$, $X\setminus C$ has two connected components called the \emph{halfspaces of $H$}, which are subcomplexes of $X$. Hyperplanes are transverse if and only if their halfspaces pairwise intersect, i.e.~removing both open carriers separates $X$ into four connected components. We will say a quadruple of vertices of $X$ witnesses the transversality of the two hyperplanes if it contains one point per such component. For $A,B$ non-empty subspaces of $X$, the \emph{separator} $\s(A\mid B)$ is the set of hyperplanes $H$ such that $A$ is contained in one halfspaces of $H$ and $B$ is contained in the other. We will sometimes use this notation with $A$ or $B$ being a hyperplane instead of a subspace. The definition remains the same taking as a subspace the carrier of the hyperplane.
\end{defi}

\begin{defi}[Sageev's construction, restriction quotients]
\label{restriction}
    A \emph{discrete wallspace} is a set $E$ together with a set $\mathcal{W}$ of partitions of $E$ in two parts, such that two points of $E$ are always in the same part of all but finitely many partitions in $\mathcal{W}$. When $Z$ is a $\cat$ cube complex, $Z^{(0)}$ with the set of partitions given by halfspaces of hyperplanes is a discrete wallspace. Conversely for any wallspace $(E,\mathcal{W})$ the following construction defines a $\cat$ cube complex $X(E,\mathcal{W})$:
    \begin{itemize}
        \item Each vertex of $X(E,\mathcal{W})$ is the choice of one part in each partition such that the chosen parts pairwise intersect and every vertex of $E$ belongs to all but finitely many of the chosen parts.
        \item Two distinct vertices of $X(E,\mathcal{W})$ are linked by an edge if and only if they have the same chosen parts for all partitions except one.
        \item $2^k$ distinct vertices of $X(E,\mathcal{W})$ are the vertices of a $k$-cube if and only if they have the same chosen parts for all partitions except $k$. Each possible choice of parts for these $k$ partitions corresponds to one of the vertices.
    \end{itemize}
    The set of hyperplanes of $X(E,\mathcal{W})$ canonically identifies with $\mathcal{W}$. Starting with a $\cat$ cube complex $Z$ with set of hyperplanes $\W$, the $\cat$ cube complex $X(Z,\W)$ canonically identifies with $Z$.

    Let $Z$ be a $\cat$ cube complex with set of hyperplanes $\W$ and let $\W'\subseteq \W$. Then $(Z^{(0)},\W')$ is a discrete wallspace and the map $id_{Z^{(0)}}\colon (Z^{(0)},\W)\to (Z^{(0)},\W')$ induces a cellular map $Z\simeq X(Z^{(0)},\W)\to X(Z^{(0)},\W')$ of $\cat$ cube complexes, called the \emph{restriction quotient of $Z$ associated with $\W'$}. The map is defined on vertices by mapping a choice of parts made for all partitions in $\W$ to the same choice of parts for the subset of partitions $\W'$. Adjacent vertices in $Z$ are mapped to equal or adjacent vertices in $X(Z^{(0)}, \W')$, and the map extends affinely to $k$-cubes.
\end{defi}
Sageev's construction was introduced in \cite{ccc}, and restriction quotients were introduced by Caprace and Sageev in \cite{rankrigidity}. The following lemmas highlight the relationship between hyperplanes and edge paths in $\cat$ cube complexes.

\begin{lm}
\label{square}
In a $\cat$ cube complex, two edges dual to the same hyperplane and incident at a vertex are equal. Two edges dual to transverse hyperplanes and incident at a vertex span a square at that vertex. Moreover, every embedded edge cycle is of even length at least $4$, and every embedded edge cycle of length $4$ bounds a square.
\end{lm}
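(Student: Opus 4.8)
The plan is to establish each of the four assertions using the standard dictionary between $\cat$ cube complexes, their hyperplanes, and geodesics in the combinatorial metric, invoking Gromov's link condition where local structure is needed and global $\cat$-ness (uniqueness of geodesics, convexity of halfspaces) where it is not.

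For the first claim, suppose $e$ and $f$ are distinct edges incident at a vertex $v$ and dual to the same hyperplane $H$. Since parallelism of edges is generated by the ``opposite sides of a square'' relation, there is a finite chain of squares connecting $e$ to $f$; this exhibits a nontrivial loop in $v$'s link (or more precisely a path in the link between the two vertices corresponding to $e$ and $f$ together with the square-chain). The honest argument is to note that a hyperplane in a $\cat$ cube complex separates the complex into two halfspaces, and the carrier of $H$ retracts onto $H$ as a product $H\times[0,1]$ (this is the standard structure of the carrier in a $\cat$ cube complex); an edge dual to $H$ through $v$ corresponds to the fiber $\{v'\}\times[0,1]$, which is unique. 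I would cite the carrier-product description from the background references rather than reprove it. For the second claim, if $e,f$ are incident at $v$ and dual to transverse hyperplanes $H,H'$, then by definition of transversality $H$ and $H'$ have dual edges spanning a square somewhere; transporting this square along the product structure of the carriers (or directly: the four halfspace-intersections are all nonempty, so in Sageev's construction the vertex of $X$ corresponding to $v$'s choice can be flipped in the $H$-coordinate, the $H'$-coordinate, or both, giving a square) produces a square at $v$ with sides $e$ and $f$. Equivalently, one checks Gromov's link condition forces the two link-vertices of $e$ and $f$ to be joined by an edge, which by the link condition closes up to a square.

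For the parity and minimal-length statements, let $\gamma$ be an embedded edge cycle. Traversing $\gamma$ once, each hyperplane $H$ is crossed by some number of edges of $\gamma$; since $\gamma$ is a closed path it must return to the starting halfspace of $H$, so $H$ is crossed an even number of times, whence $\card{\gamma}$ is even. If $\card{\gamma}=2$ then the two edges are dual to the same hyperplane (a $2$-cycle crosses exactly one wall, twice) and share both endpoints, contradicting embeddedness via the first claim; so $\card{\gamma}\geq 4$. Finally, suppose $\card{\gamma}=4$ with vertices $v_1,v_2,v_3,v_4$ in cyclic order. The two edges $v_1v_2$ and $v_3v_4$ cannot be dual to the same hyperplane (else, as before, after removing that crossing one gets a shorter closed path, or one argues the remaining two edges $v_2v_3$, $v_4v_1$ would form a $2$-cycle), and similarly for $v_2v_3$, $v_4v_1$; so the two hyperplanes $H$ (dual to $v_1v_2$, hence also $v_3v_4$) and $H'$ (dual to $v_2v_3$, hence $v_4v_1$) are distinct, and they are transverse because the two edges $v_1v_2$, $v_2v_3$ are incident at $v_2$ and dual to distinct hyperplanes — so either they span a square (and we are essentially done) or their hyperplanes are disjoint, which we rule out using that $v_1$ and $v_3$ both lie at distance $1$ from $v_2$ and at distance $1$ from $v_4$, forcing $d(v_1,v_3)\leq 2$ with the median $\mu(v_1,v_3,v_2)=v_2$ and $\mu(v_1,v_3,v_4)=v_4$, so $v_1,v_2,v_3,v_4$ are precisely the vertices of an interval of the form $[v_1,v_3]$ which, having diameter $2$ and four vertices, is a square by the characterization of $\cat$ cube complexes as median graphs (no $K_4$ minus an edge, cubical intervals).

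The main obstacle I anticipate is the last assertion: ruling out the degenerate configuration where the $4$-cycle ``folds'' — i.e.\ making rigorous that an embedded $4$-cycle cannot consist of two geodesic bigons glued along their endpoints, and that once the two crossing hyperplanes are seen to be distinct and transverse, the cycle is genuinely the boundary of a single square rather than sitting inside a larger configuration. The cleanest route is via the median/interval description: show $\{v_1,v_2,v_3,v_4\}=[v_1,v_3]$, observe this interval has exactly $4$ elements and diameter $2$, and quote that such an interval in a median graph is a $4$-cube, i.e.\ a square. I would keep the exposition short by leaning on the equivalence ``$\cat$ cube complex $\Leftrightarrow$ median graph'' recalled from \cite{median}, \cite{richestoraags} rather than arguing link conditions by hand.
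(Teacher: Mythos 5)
Your proposal is essentially correct and follows the same overall route as the paper: the first two claims are quoted from the standard literature (the paper cites Lemmas~3.3 and 3.6 of \cite{richestoraags}; your Sageev-construction argument for the square is a legitimate proof of the second claim), evenness comes from the parity of wall crossings, and the length-$4$ case reduces to the first two claims. One sentence in your last paragraph is wrong as written: you assert that the \emph{opposite} edges $v_1v_2$ and $v_3v_4$ cannot be dual to the same hyperplane, and then two lines later correctly use that they \emph{are} (parity forces each of the two hyperplanes to be crossed twice, and the first claim rules out \emph{adjacent} edges being dual to the same hyperplane, since they would then be equal and the cycle not embedded). This is clearly a slip rather than a gap, since the rest of your argument uses the correct pairing. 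For the final step, the paper is more direct than your interval computation: the four vertices of the cycle lie one in each of the four halfspace intersections of $H$ and $H'$, hence witness transversality in the sense of the paper's definition, so two consecutive edges span a square by the second claim, and the remaining two edges of the cycle are then identified with the remaining two sides of that square by the uniqueness in the first claim. Your median/interval route reaches the same conclusion but its last step (``an interval with four vertices and diameter $2$ is a square of the complex'') silently invokes either the second claim again or Chepoi's median-graph characterization, so it buys no economy; also note you should say a word about why $d(v_1,v_3)\neq 1$ (parity again, applied to the resulting embedded $3$-cycle).
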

The first part of this result is a consequence of Lemma~3.3 and Lemma~3.6 in \cite{richestoraags}. Any edge cycle crosses every hyperplane an even number of times, hence is of even length. Thus, there are no embeddeded edge cycles of length $2$, and in every embedded cycle of length $4$, pairs opposite edges belong to two distinct parallelism classes. The four vertices of the cycle witness the transversality of the two hyperplanes involved, and the first part of the lemma asserts that two consecutive edges in this cycle span a square. The two remaining edges of the cycle are parallel to the two remaining sides of the square and incident to them, thus equal.

A straightforward induction gives the following corollary
\begin{cor}
\label{product}
Let $H$ be a hyperplane in a $\cat$ cube complex $X$ and $S$ a subcomplex of the carrier of $H$, contained in one halfspace of $H$. Then $S$ spans a product with an edge dual to $H$. In other words, there exists a cellular map $[0,1]\times S\to X$ injective on open cells mapping $\{0\}\times S$ to $S$ and $[0,1]\times \{v\}$ to an edge dual to $H$ for each vertex $v$ of $S$.
\end{cor}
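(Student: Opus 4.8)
The plan is to build the map cell by cell, starting from vertices. First, observe that every vertex $v$ of $S$ is incident to exactly one edge $e_v$ dual to $H$: since $v$ lies in the carrier of $H$, it is a vertex of some cube $C$ of $X$ whose closure contains an edge dual to $H$, and $C$ — being isometric to a Euclidean cube in which the edges dual to a fixed hyperplane form a single coordinate direction (two edges of a cube in distinct directions are transverse, hence not parallel) — has an edge incident to $v$ dual to $H$; uniqueness is the first assertion of Lemma~\ref{square}. Write $\phi(v)$ for the endpoint of $e_v$ other than $v$. Since $e_v$ crosses $H$ and $v$ lies in the halfspace $\mathfrak h$ containing $S$, the vertex $\phi(v)$ lies in the opposite halfspace, and $\phi$ is injective on $S^{(0)}$: if $\phi(v)=\phi(w)$ then $e_v$ and $e_w$ are edges dual to $H$ sharing the vertex $\phi(v)$, hence equal by Lemma~\ref{square}, so $v=w$.

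Next comes the key local step: for every cube $\tau$ of $S$ there is a unique cube $C_\tau$ of $X$ of dimension $\dim\tau+1$ admitting a combinatorial isomorphism $C_\tau\cong\tau\times[0,1]$ under which $\tau\times\{0\}$ is identified with $\tau$ and each edge $\{v\}\times[0,1]$ with $e_v$. For existence: a point in the relative interior of $\tau$ lies in some cube $C$ of $X$ containing an edge dual to $H$ (as $\tau\subseteq S\subseteq$ carrier of $H$), and that point lies in the relative interior of a unique face of $C$, which must therefore be $\tau$; so $\tau$ is a face of $C$. Writing $C\cong D\times[0,1]$ with the $[0,1]$-factor dual to $H$, the cube $\tau$ does not cross $H$, so it is a face of $D\times\{0\}$ or $D\times\{1\}$; taking the side lying in $\mathfrak h$ gives $\tau=\tau_0\times\{0\}$, and we set $C_\tau=\tau_0\times[0,1]\subseteq C$. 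For uniqueness, the vertex set of any such cube is forced to be $\tau^{(0)}\cup\phi(\tau^{(0)})$, and a cube of a $\cat$ cube complex is determined by its vertex set (cf. Sageev's construction, Definition~\ref{restriction}). Moreover this family is coherent: if $\tau'$ is a face of $\tau$, the vertex set of $C_{\tau'}$ is contained in that of $C_\tau$ and spans a cube, hence is a face of $C_\tau$, so the isomorphism $C_{\tau'}\cong\tau'\times[0,1]$ is the restriction of $C_\tau\cong\tau\times[0,1]$.

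Finally, define the map $[0,1]\times S\to X$ by sending $[0,1]\times\tau$ isomorphically onto $C_\tau$ via $[0,1]\times\tau\cong\tau\times[0,1]\cong C_\tau$, for each cube $\tau$ of $S$; by coherence these agree on overlaps and glue to a cellular map, which restricts to the identity on $\{0\}\times S$ and sends $[0,1]\times\{v\}$ to $e_v$. Injectivity on open cells: cells $\{0\}\times\operatorname{int}\tau$ map to the distinct cells $\operatorname{int}\tau$ of $S$; cells $(0,1)\times\operatorname{int}\tau$ map to the cell $\operatorname{int}\tau\times(0,1)$ of $C_\tau$, which lies in the open carrier of $H$ and which determines $\tau$; and cells $\{1\}\times\operatorname{int}\tau$ map to $\operatorname{int}\phi(\tau)$, which lies in the halfspace opposite $\mathfrak h$ and determines $\tau$ since $\phi$ is injective on vertices. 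The only point that genuinely needs care is the coherence of the family $(C_\tau)_\tau$ — this is where the "straightforward induction" really consists of repeatedly invoking Lemma~\ref{square} together with the fact that a cube is pinned down by its vertices — the rest is bookkeeping.
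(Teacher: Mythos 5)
Your proof is correct, and it is essentially the ``straightforward induction'' from Lemma~\ref{square} that the paper alludes to without writing out: you repeatedly use the uniqueness of the dual edge at each vertex and the fact that a cube of a $\cat$ cube complex is determined by its vertices to build the product cube over each cell of $S$ coherently. Nothing is missing.
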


\begin{lm}
\label{edgepath}
In a $\cat$ cube complex, an edge path is geodesic if and only if it crosses no hyperplane twice. In particular, the distance between two vertices is equal to the cardinality of their separator.
\end{lm}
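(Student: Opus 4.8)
The plan is to prove both directions via the hyperplane/separator dictionary. For the forward direction, I would argue the contrapositive: if an edge path $\gamma = (e_1,\dots,e_n)$ crosses some hyperplane $H$ twice, then it is not geodesic. So suppose $e_i$ and $e_j$ with $i<j$ are both dual to $H$, and choose such a pair with $j-i$ minimal. Then the subpath $\gamma' = (e_i,\dots,e_j)$ crosses $H$ exactly twice (at its two endpoints) and every other hyperplane it crosses it crosses exactly once, by minimality. The key claim is that one can shorten $\gamma'$: since $e_i$ and $e_j$ are both dual to $H$, the intermediate path from the far endpoint of $e_i$ to the near endpoint of $e_j$ stays in one halfspace of $H$, so by Corollary~\ref{product} it spans a product with an edge dual to $H$; pushing the path across this product (a sequence of elementary square moves) replaces $(e_i,\dots,e_j)$ by a path of the form $e_i'$ followed by the pushed interior followed by $e_j'$ where now $e_i'$ and $e_j'$ are opposite sides, hence can be seen to collapse — more carefully, the standard move is that a path crossing $H$ twice with the first and last edges dual to $H$ can be homotoped (through squares) to a path not crossing $H$ at all and of length two shorter. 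Iterating removes all repeated crossings, producing a strictly shorter path with the same endpoints, so $\gamma$ was not geodesic.

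For the converse, suppose $\gamma$ from $x$ to $y$ crosses no hyperplane twice. Every edge path from $x$ to $y$ must cross each hyperplane in $\s(x\mid y)$ an odd number of times (in particular at least once) and each other hyperplane an even number of times; this is because crossing a hyperplane toggles which halfspace the path currently occupies relative to that hyperplane. Hence the length of any edge path from $x$ to $y$ is at least $\card{\s(x\mid y)}$. On the other hand, $\gamma$ crosses each hyperplane at most once, and it can only cross hyperplanes in $\s(x\mid y)$ — a hyperplane not separating $x$ from $y$ that were crossed once would be crossed an odd number of times, contradiction — so the length of $\gamma$ is exactly $\card{\s(x\mid y)}$. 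Therefore $\gamma$ realizes the minimum, i.e.\ it is geodesic, and $d(x,y) = \card{\s(x\mid y)}$; the final sentence of the statement follows since any geodesic works.

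The main obstacle is making the shortening step in the forward direction precise: one needs to know that when an edge path has its first and last edges dual to $H$ and crosses $H$ nowhere in between, the interior path lies in a single halfspace and in fact on the carrier side, so that Corollary~\ref{product} (or Lemma~\ref{square} applied repeatedly) supplies the square moves needed to slide the two $H$-dual edges together and cancel them. This is where the $\cat$ condition (via the link condition underlying Lemma~\ref{square}) does the real work; once the elementary move is in hand, the rest is bookkeeping with the parity argument. I would either cite the standard statement (e.g.\ from \cite{richestoraags} or \cite{genevoisbook}) or spell out the square-pushing induction on $j-i$.
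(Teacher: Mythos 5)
The paper does not actually prove the first assertion of Lemma~\ref{edgepath}: it cites Lemma~3.9 of \cite{richestoraags} and only records the one-line deduction of the distance formula. So you are reproving the quoted standard fact. Your converse direction is complete and correct: the parity count shows every edge path from $x$ to $y$ has length at least $\card{\s(x\mid y)}$, and a path crossing no hyperplane twice attains this bound, hence is geodesic; combined with the forward direction this gives $d(x,y)=\card{\s(x\mid y)}$.

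The forward direction, however, contains the gap you yourself flag, and it is a genuine one: knowing that the interior subpath $\sigma$ between the two $H$-dual edges stays in one halfspace of $H$ does \emph{not} place $\sigma$ in the carrier of $H$, and Corollary~\ref{product} applies only to subcomplexes of the carrier; an arbitrary interior path can wander arbitrarily far from $H$. There is also a smaller slip: you minimize $j-i$ only over pairs dual to the fixed $H$, which does not justify the claim that $\sigma$ crosses every \emph{other} hyperplane at most once. The clean repair is to reorder the argument: prove the converse (parity) direction first; then choose the repeated pair $(e_i,e_j)$ with $j-i$ minimal over \emph{all} hyperplanes, so that $\sigma$ crosses no hyperplane twice and is therefore geodesic by the already-established converse; its endpoints lie in the convex set given by the carrier of $H$ intersected with one halfspace, so $\sigma$ lies there too, and Corollary~\ref{product} together with the first part of Lemma~\ref{square} identifies the boundary edges of the resulting product with $e_i$ and $e_j$, yielding a replacement path shorter by $2$. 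With that reordering (or with the edge-by-edge square-pushing induction you mention, carried out in full) the argument closes; as written it does not.
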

The first part of this result is Lemma~3.9 in \cite{richestoraags}. It implies that a geodesic path crosses a hyperplane if and only if its endpoints belong to distinct halfspaces of the hyperplane. The second part follows.

\begin{defi}[Convexity, local convexity]
    When $X$ is $\cat$, a subcomplex $Y\subseteq X$ is \emph{convex} if it is full (every cube of $X$ with all vertices in $Y$ is in $Y$) and geodesically convex (every geodesic edge path in $X$ between vertices of $Y$ is contained in $Y$). Geodesic convexity is equivalent to the inclusion $\mu(Y^{(0)},Y^{(0)},X^{(0)})\subseteq Y^{(0)}$. When $Y$ is convex, $Y$ is itself $\cat$ and the inclusion map is an isometric embedding. Carriers and halfspaces of hyperplanes in $X$ are convex. A subcomplex is convex if and only if it is an intersection of halfspaces (the empty intersection corresponding to $X$ itself).

    Likewise, when $X$ is locally $\cat$ a subcomplex $Y\subseteq X$ is \emph{locally convex} if all the connected components of the preimage of $Y$ in the universal cover of $X$ are convex. When $Y$ is locally convex, $Y$ is itself locally $\cat$ and the inclusion map is locally isometric and $\pi_1$-injective.
\end{defi}

\begin{lm}[Helly property for convex subcomplexes]
In a $\cat$ cube complex, if finitely many convex subcomplexes pairwise intersect, they intersect globally.
\end{lm}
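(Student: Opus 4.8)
The plan is to induct on the number $n$ of convex subcomplexes, reducing everything to the case $n=3$, which I handle with a single application of the median. Two facts from the preceding material do all the heavy lifting. First, a subcomplex of a $\cat$ cube complex is convex exactly when it is an intersection of halfspaces; hence a finite intersection of convex subcomplexes is convex, and since a nonempty intersection of subcomplexes is again a nonempty subcomplex, it contains a vertex. Second, geodesic convexity of $Y\subseteq X$ is equivalent to $\mu(Y^{(0)},Y^{(0)},X^{(0)})\subseteq Y^{(0)}$; so a convex subcomplex is closed under taking the median of two of its vertices with an arbitrary third vertex of $X$.

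The cases $n\le 2$ are immediate. For $n=3$, let $C_1,C_2,C_3$ be convex subcomplexes that pairwise intersect, and pick a vertex $x_{ij}\in C_i\cap C_j$ for each pair $i<j$. Put $m=\mu(x_{12},x_{13},x_{23})$. Since $x_{12},x_{13}\in C_1$, convexity of $C_1$ gives $m\in C_1$; likewise $x_{12},x_{23}\in C_2$ gives $m\in C_2$, and $x_{13},x_{23}\in C_3$ gives $m\in C_3$. Thus $m\in C_1\cap C_2\cap C_3$.

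For $n\ge 4$, assume the statement for $n-1$ subcomplexes and let $C_1,\dots,C_n$ be convex and pairwise intersecting. Replace the pair $C_{n-1},C_n$ by $C_{n-1}\cap C_n$, which is convex and nonempty. The $n-1$ subcomplexes $C_1,\dots,C_{n-2},C_{n-1}\cap C_n$ still pairwise intersect: for indices $i,j\le n-2$ this is part of the hypothesis, while for $i\le n-2$ the triple $C_i,C_{n-1},C_n$ is pairwise intersecting, so $C_i\cap(C_{n-1}\cap C_n)\neq\emptyset$ by the case $n=3$. The induction hypothesis then yields a common vertex, i.e.\ $\bigcap_{i=1}^n C_i\neq\emptyset$.

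The argument has essentially no obstacle: the only content is the case $n=3$, and that is forced once one knows convex subcomplexes are median-closed. One could instead argue via nearest-point (gate) projections onto halfspaces, but that route is longer and the median formulation already packages exactly what is needed.
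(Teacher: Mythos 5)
Your proof is correct and follows essentially the same route as the paper: the $n=3$ case via the median and median-closure of convex subcomplexes, then induction by intersecting with one of the sets (the paper intersects every $A_i$ with $A_n$, you intersect only $C_{n-1}$ with $C_n$ — an immaterial difference). No gaps.
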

\begin{proof}
The proof is by induction on the number of subcomplexes. If $A_1, A_2, A_3$ pairwise intersect, let $\alpha,\beta,\gamma$ belong to each of the three intersections. The median $\mu(\alpha,\beta,\gamma)$ belongs to $A_1$ because two of the points do and $A_1$ is convex. Symmetrically it belongs to $A_2$ and $A_3$, proving the global intersection is non-empty.

If $A_1, A_2, \dots A_n$ pairwise intersect, they intersect triplewise by the previous case, thus $A_1\cap A_n, A_2\cap A_n, \dots, A_{n-1}\cap A_n$ are pairwise intersecting convex subcomplexes. Applying the induction hypothesis, the $A_i$ intersect globally.
\end{proof}

\begin{defi}[Special cube complex, see \cite{special}, \emph{A-special}, Definition~3.2 or \cite{richestoraags}, Definition~4.2]
\label{special}
Let $G$ be a group acting freely on a $\cat$ cube complex $X$ by isometries. This action is \emph{cospecial} when it satisfies the following assumptions.
\begin{itemize}
    \item (2-sidedness) If $g\in G$ stabilizes a hyperplane of $X$, $g$ does not exchange its halfspaces.
    \item (no self-intersection) For $g\in G$ and $H$ a hyperplane of $X$, $H$ and $gH$ are not transverse.
    \item (no direct self-osculation) For $g\in G$ and $H$ a hyperplane of $X$ with halfspaces $H^+, H^-$, if the carriers of $H$ and $gH$ intersect, then $H^+$ and $gH^+$ intersect, and $H^-$ and $gH^-$ intersect.
    \item (no inter-osculation) For $g\in G$ and $H,K$ transverse hyperplanes of $X$, if the carriers of $gH$ and $K$ intersect, then $gH$ and $K$ are transverse.
\end{itemize}
A \emph{special cube complex} is the quotient of a cospecial action. It is a locally $\cat$ cube complex.
\end{defi}

\subsection{Untwisted outer space for right-angled Artin groups}

Right-angled Artin groups are well-known example of finitely presented groups. We only recall some of their properties, a more detailed account can be found in \cite{introraags}.

\begin{defi}
    Let $\Gamma = (V,E)$ be a non-empty finite simple graph. The corresponding \emph{right-angled Artin group} is the group $A_\Gamma$ given by the following presentation.
    \[A_\Gamma=\gen{V\mid [v_i,v_j]\; \forall \{v_i,v_j\}\in E}\]

    \emph{Standard generators} of $A_\Gamma$ are the elements of $V$. Let $V^\pm = V\sqcup V^{-1}\subset A_\Gamma$. For $x\in V^\pm$ the \emph{link of $x$} is the set $lk(x)$ defined by the intersection of $V^\pm\setminus\{x,x^{-1}\}$ with the centralizer of $x$. When $x\in V$, elements of $lk(x)$ are the vertices of $\Gamma$ adjacent to $x$ and their inverses.

    The \emph{Salvetti complex} $\SS$ associated with $A_\Gamma$ is the subcomplex of the torus $(S^1)^V$ whose cubes correspond one-to-one to the cliques of $\Gamma$. It is, up to isomorphism, the only special cube complex with one vertex with fundamental group isomorphic to $A_\Gamma$. Hyperplanes of $\SS$ have a canonical bijective labeling by $V$. Two hyperplanes are transverse if and only if their labels are adjacent in $\Gamma$.
\end{defi}

In this paper, we are interested in the outer automorphism group $\out(A_\Gamma)$ and more precisely to its \emph{untwisted subgroup} $\unt(A_\Gamma)\leq \out(A_\Gamma)$. A definition can be found in \cite{untwistedos}, Section~2.2, but we quote the following characterization due to Fioravanti.

\begin{thm}[\cite{coarsemedian}, Proposition~A]
\label{CMP}
    An outer automorphism $[\varphi]\in \out(A_\Gamma)$ is untwisted if and only if for some (any) representative $\varphi \in \aut(A_\Gamma)$, the standard action of $A_\Gamma$ on the universal cover $\uc{\SS}$, denoted $(g,x)\mapsto g\cdot x$, and the action given by $(g,x)\mapsto \varphi(g)\cdot x$ induce the same coarse median structure on $A_\Gamma$ \textit{via} pulling back the median on the universal cover $\uc{\SS}$.
\end{thm}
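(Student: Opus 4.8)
The plan is to run the two implications separately, using the Laurence--Servatius generating set. Recall that $\out(A_\Gamma)$ is generated by graph symmetries, inversions $v\mapsto v^{-1}$, partial conjugations, and dominated transvections $v\mapsto vw$ (with $lk(w)\subseteq st(v)$, where $st(v)=lk(v)\cup\{v\}$); the transvections split into \emph{folds} when $w\notin lk(v)$ and \emph{twists} when $w\in lk(v)$, and by definition $\unt(A_\Gamma)$ is generated by all of these except the twists. Before splitting, I would set up bookkeeping: let $\mathcal P\subseteq\aut(A_\Gamma)$ be the set of $\varphi$ for which the $\varphi$-twisted action $(g,x)\mapsto\varphi(g)\cdot x$ and the standard action pull back the same coarse median structure on $A_\Gamma$. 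One checks that $\mathcal P$ is a subgroup containing $\inn(A_\Gamma)$ and that membership does not depend on the chosen basepoint of $\uc{\SS}$: changing basepoint, or precomposing with an inner automorphism, alters the orbit map (hence the pulled-back coarse median operator) only by a bounded amount because $A_\Gamma$ acts cocompactly by isometries on the $\cat$ space $\uc{\SS}$, and $\mu_{\uc{\SS}}$ is $1$-Lipschitz in each argument (a standard consequence of the separator description of distances, Lemma~\ref{edgepath}). Closure under composition is immediate since the $\varphi\psi$-twisted orbit map is the composite of two coarse-median isomorphisms. So it suffices to work at the level of $[\varphi]$ and, for the ``only if'' direction, to check generators.

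\emph{Proof that $\unt(A_\Gamma)\subseteq\mathcal P$.} Graph symmetries and inversions are induced by combinatorial automorphisms of the Salvetti complex $\SS$ itself, so they lift to $A_\Gamma$-equivariant isometries of $\uc{\SS}$ intertwining the standard and twisted actions, and thus preserve $\mu_{\uc{\SS}}$ exactly. For a fold and a partial conjugation, I would use the geometric picture underlying the spine $K_\Gamma$: each such generator is realized by an elementary move, i.e.\ a composition of a blow-up of $\SS$ followed by a hyperplane collapse back onto a Salvetti-like complex, lifting to an $A_\Gamma$-equivariant cellular quasi-isometry $\uc{\SS}\to\uc{\SS}$ between the twisted and the standard actions. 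Since a general collapse is not a coarse-median isomorphism, the point to verify is that \emph{these} moves are: either by a direct analysis of how separators of pairs of vertices transform (the collapsed/blown-up hyperplanes involved in a single elementary move meet any geodesic boundedly, because the move is a homotopy equivalence and the affected hyperplanes do not exhaust an $A_\Gamma$-orbit), or, alternatively and more conceptually, by reducing coarse-median-preservation to the standard parabolic subgroups (locality of the coarse median structure on a RAAG) and observing that a fold acts on the relevant parabolic --- which is a free group, $v,w$ being non-adjacent --- by a Nielsen transformation, and a partial conjugation acts on each relevant parabolic by a conjugation, both of which are coarse-median maps because on trees every quasi-isometry coarsely preserves medians.

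\emph{Proof that $\mathcal P\subseteq\unt(A_\Gamma)$, by contraposition.} Suppose $[\varphi]\notin\unt(A_\Gamma)$. The crux is an algebraic reduction: using the structure of $\out(A_\Gamma)$ relative to $\unt(A_\Gamma)$ (Laurence normal form, equivalently the fact that the twists generate, modulo $\unt(A_\Gamma)$, a subgroup assembled from $GL$-type pieces indexed by adjacent dominating pairs), one multiplies $[\varphi]$ on both sides by elements of $\unt(A_\Gamma)$ and by an inner automorphism so that the resulting representative restricts, on some standard copy $\ZZ^2=\langle v,w\rangle$ with $w\in lk(v)$, to a genuine shear $v\mapsto vw^{k}$, $w\mapsto w$ with $k\neq0$. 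Now $\langle v,w\rangle$ is a standard parabolic, so the flat $F\subseteq\uc{\SS}$ it stabilizes is a convex subcomplex isometric to $\RR^2$ with the $\ell^1$ metric and the integer square lattice as $0$-skeleton; convexity ($\mu(F^{(0)},F^{(0)},\uc{\SS}^{(0)})\subseteq F^{(0)}$) makes the restriction of $\mu_{\uc{\SS}}$ to $F$ the coordinatewise median. Under the standard action the $\langle v,w\rangle$-orbit of the basepoint is the square lattice, while under the $\varphi$-twisted action it is this lattice sheared by $k$. Evaluating both pulled-back medians on the triples $(1,v^{n},w^{n})$, the standard median is within bounded distance of $1$ while the twisted one is within bounded distance of $w^{kn}$, so the two differ by distance $\asymp|k|n$ in $\uc{\SS}$, which is unbounded; hence $[\varphi]\notin\mathcal P$.

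I expect the main obstacle to be the algebraic reduction in the last paragraph --- passing from ``$[\varphi]$ is not untwisted'' to ``some honest shear survives on a standard adjacent pair after multiplication by untwisted and inner automorphisms'' --- since this is exactly where the fine interaction of the twist generators with $\unt(A_\Gamma)$ inside $\out(A_\Gamma)$ is needed. A secondary, more technical point is the ``only if'' direction for folds and partial conjugations: one must either establish the locality of coarse-median-preservation over standard parabolics, or control the hyperplane-wise effect of the blow-up/collapse moves uniformly; packaging everything through restriction quotients (Definition~\ref{restriction}) and the separator description of distances (Lemma~\ref{edgepath}) is the cleanest route I see.
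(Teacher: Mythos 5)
This statement is not proved in the paper at all: it is imported verbatim from Fioravanti (Proposition~A of the cited reference), so there is no internal proof to compare your attempt against. Judged on its own terms, your outline handles the soft parts plausibly. The bookkeeping (that the coarse-median-preserving automorphisms form a subgroup $\mathcal P$ containing $\inn(A_\Gamma)$, independent of basepoint) is fine, graph symmetries and inversions are indeed induced by automorphisms of $\SS$ itself, and realizing folds and partial conjugations by a blow-up followed by a collapse is consistent with the machinery of this paper: a strong collapse with finitely many hyperplane orbits is a median-preserving quasi-isometry by Lemmas~\ref{collapse1} and \ref{quasiisometry}, which is exactly how \cref{newspineth} uses the present theorem. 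So the inclusion $\unt(A_\Gamma)\subseteq\mathcal P$ is within reach along your lines, granting the CSV description of the elementary moves.

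The genuine gap is the ``algebraic reduction'' in the contrapositive direction, and it is not a secondary obstacle --- it is essentially the entire content of the theorem. There is no Laurence-type normal form writing an arbitrary $[\varphi]\notin\unt(A_\Gamma)$ as an untwisted element times a product of twists from which a single honest shear survives: $\unt(A_\Gamma)$ is not normal in $\out(A_\Gamma)$ in general, the twist generators and the untwisted generators interact nontrivially, and the assertion that one can multiply by untwisted and inner elements until some representative restricts to $v\mapsto vw^k$, $w\mapsto w$ on a standard $\ZZ^2=\gen{v,w}$ is precisely the statement that needs proof. (Your terminal computation on the convex flat is correct in spirit, modulo a small slip: for $k\ge 1$ the twisted median of $(1,v^n,w^n)$ is $w^{\min(kn,n)}=w^{n}$ rather than $w^{kn}$, but the linear divergence, hence the conclusion, survives.) Likewise, the alternative route for folds via ``locality of the coarse median structure over standard parabolics'' is itself an unproved reduction, since the median of a triple of group elements need not be computable inside any single parabolic. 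As written, the proposal is a sensible plan that establishes the easy inclusion and correctly isolates, but does not close, the hard one; the paper deliberately sidesteps all of this by citing Fioravanti's result.
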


The group $\unt(A_\Gamma)$ acts on the \emph{spine of untwisted outer space}, $K_\Gamma$, which was constructed in \cite{untwistedos}, and of which we will provide another construction. Below, we recall the main features of the construction in \cite{untwistedos}.

\begin{defi}[Whitehead partition]
Consider partitions $\mathbf{P} = (\{P,P^*\},L)$ of $V^\pm$ into three parts: $V^\pm = P\sqcup P^*\sqcup L$, with $P$ and $P^*$ non-empty. One distinguished part, $L$, is called the \emph{link} of the partition, and the other parts $P,P*$ are the \emph{sides} of the partition. We identify each partition $(P,P^*,L)$ with its \emph{opposite} $(P^*,P,L)$. The following notation will be useful:
\[\begin{aligned}
lk(\mathbf{P}) &= L\\
single(\mathbf{P}) &= \{x\in V^\pm\mid x\in P,\,x^{-1}\in P^* \text{ or }x\in P^*,\,x^{-1}\in P\}\\
double(P) &= \{x\in V^\pm \mid x,x^{-1}\in P\}\\
double(P^*) &= \{x\in V^\pm \mid x,x^{-1}\in P^*\}
\end{aligned}\]

A \emph{based Whitehead partition} $(\mathbf{P},b)$ is a partition $\mathbf{P}$ of the form above, together with a \emph{basepoint} $b\in V^\pm$, satisfying the following hypotheses:
\begin{itemize}
\item $b\in P$, $b^{-1}\in P^*$, and $L=lk(b)$.
\item If $x\in P$ and $x^{-1}\in P^*$, $lk(x)\subseteq L$ (we say that $\mathbf{P}$ \emph{splits} $x$).
\item If $x\in P$ and $y\in P^*$ are not inverses, $x$ and $y$ do not commute.
\item $P$ and $P^*$ have at least two elements each.
\end{itemize}
A \emph{Whitehead partition} is a partition $\mathbf{P}$ of the form above such that there exists $b\in V^\pm$ making $(\mathbf{P},b)$ a based Whitehead partition. Such a $b$ is not unique in general. A Whitehead partition is entirely determined by any basepoint and one side.

A Whitehead partition $\mathbf{P}$ is \emph{adjacent} to a standard generator $v$ when some (any) basepoint of $\mathbf{P}$ is adjacent to $v$ in $\Gamma$. Likewise, two Whitehead partitions $\mathbf{P}$ and $\mathbf{Q}$ are \emph{adjacent} when some (any) basepoints for $\mathbf{P}$ and $\mathbf{Q}$ are adjacent in $\Gamma$. The partitions $\mathbf{P}$ and $\mathbf{Q}$ are \emph{compatible} if they are adjacent or if exactly one of $P\cap Q, P\cap Q^*, P^*\cap Q, P^*\cap Q^*$ is empty (see \cite{twistedos}, Definitions~2.7 and 2.8, amending \cite{untwistedos}, Definition~3.3).
\end{defi}

\begin{defi}[Spine of untwisted outer space]
\label{defspine}
Let $X$ be any locally $\cat$ cube complex with fundamental group isomorphic to $A_\Gamma$. A \emph{marking} on $X$ is a class of group isomorphisms $m\colon \pi_1X\to A_\Gamma=\pi_1\SS$, modulo conjugacy on both sides (allowing us to omit basepoints). A map $f\colon (X,m)\to (X',m')$ between locally $\cat$ cube complexes with markings is said to \emph{preserve the markings} if $m'\circ f_* = m$ (still modulo conjugacy on both sides).

Given $\mathbf{\Pi}$ a collection of pairwise compatible Whitehead partitions, one can construct a \emph{blow-up} $\SS^\mathbf{\Pi}$ of the Salvetti complex $\SS$ (see \cite{untwistedos}, Section~3). It is a locally $\cat$ cube complex with a cellular homotopy equivalence $c_\mathbf{\Pi}\colon \SS^\mathbf{\Pi}\to \SS$ called \emph{canonical collapse}. The hyperplanes of $\SS^\mathbf{\Pi}$ are oriented and labelled bijectively by $V\sqcup \mathbf{\Pi}$ (we will often use "hyperplanes" and "labels" indiscriminately in this context, and may speak of the label of an edge to mean the label of its dual hyperplane). A marking $m$ on $\SS^\mathbf{\Pi}$ is \emph{untwisted} if $[m\circ (c_\mathbf{\Pi})_*^{-1}]\in \unt(A_\Gamma)$. This definition does not depend on the choice of $c_\mathbf{\Pi}$: if $\iota \colon \SS^\mathbf{\Pi}\to \SS^{\mathbf{\Pi}'}$ is an isomorphism, $[(c_{\mathbf{\Pi}'})_*\circ \iota_*\circ (c_\mathbf{\Pi})_*^{-1}]\in \unt(A_\Gamma)$ (see \cite{untwistedos}, Corollary~4.13). The group $\unt(A_\Gamma)$ acts on untwisted markings of blow-ups of $\SS$ by composition on the left. The set of blow-ups of $\SS$ with untwisted markings considered up to marking-preserving combinatorial isometry, endowed with this action of $\unt(A_\Gamma)$, is the $0$-skeleton of $K_\Gamma$.
\bigskip

A hyperplane $H$ in a blow-up $\SS^{\mathbf{\Pi}}$ is a \emph{carrier retract} when the carrier of $H$ embeds as a product $[0,1]\times Y_H$. In that case, the coordinate projection $[0,1]\times Y_H\to Y_H$ extends to a \emph{collapse map} $c_H\colon \SS^\mathbf{\Pi}\to \SS^\mathbf{\Pi}_H$, which is a homotopy equivalence from $\SS^{\mathbf{\Pi}}$ to a new locally $\cat$ cube complex. More generally, this can be done with any family $\H$ of carrier retract hyperplanes of $\SS^\mathbf{\Pi}$, provided that every edge cycle whose edges are only dual to elements of $\H$ is nullhomotopic, to ensure that the collapse map remains a homotopy equivalence.

\begin{thm}(\cite{untwistedos}, Theorem~4.11)
\label{treelikecompatible}
The following assertions are equivalent for $\H$ a family of carrier retract hyperplanes in a blow-up $\SS^\mathbf{\Pi}$.
\begin{enumerate}
    \item The collapse of $\H$, $c_\H\colon \SS^\mathbf{\Pi}\to \SS^{\mathbf{\Pi}}_\H$ is a homotopy equivalence with range combinatorially isomorphic to $\SS$.
    \item For every maximal subset $\mathbf{\Pi}'\subseteq \mathbf{\Pi}$ of partitions all having the same link $L$, the set of edges labelled by $\H$ in $\SS^{\mathbf{\Pi}'}$ forms a maximal subtree of $\Theta\subseteq \SS^{\mathbf{\Pi}'}$, the graph formed by the edges whose label has link $L$.
\end{enumerate}
\end{thm}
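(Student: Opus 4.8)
The plan is to reduce the equivalence to a statement about a single ``link class'' of partitions, where it becomes the elementary fact underlying the contractibility of Auter space: collapsing a subforest of a finite connected graph is a homotopy equivalence, and its quotient is a rose precisely when that subforest is a spanning tree. Accordingly, the first step is to extract, from the construction of $\SS^\mathbf{\Pi}$ and of the canonical collapse in \cite{untwistedos}, a decomposition of the blow-up over link classes. Two partitions of $\mathbf{\Pi}$ sharing a link are non-adjacent, hence have non-transverse hyperplanes, while distinct link classes interact only through transverse (hence ``product'') pairs; this forces the blow-up of the unique vertex of $\SS$ to be a direct product $\prod_L B_L$ over the links $L$ occurring in $\mathbf{\Pi}$, and, more to the point, for each such $L$ the relevant combinatorial data is concentrated in the graph $\Theta$ of assertion~(2) read inside $\SS^{\mathbf{\Pi}'}$, where $\mathbf{\Pi}'$ is the maximal class of partitions of link $L$: namely $\Theta$ is a finite connected graph whose edges are the hyperplanes of $\mathbf{\Pi}'$ together with the standard generators $v$ with $lk(v)=L$, and the canonical collapse exhibits the $\mathbf{\Pi}'$-edges as one spanning tree of $\Theta$, so that $\pi_1(\Theta)$ is free of rank the number of such $v$. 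Finally one records that a family of carrier-retract hyperplanes and its collapse respect this picture: $\H$ decomposes as $\bigsqcup_L\H_L$ along link classes, the edges of $\SS^{\mathbf{\Pi}'}$ labelled by $\H$ being the ``$\H_L$-part'' of $\Theta$, the standard generators whose link does not occur in $\mathbf{\Pi}$ surviving every collapse onto $\SS$, and $c_\H$ acting one link class at a time before the Salvetti cells are reglued.

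Granting this, the reduction is immediate: since $\SS$ has a single vertex, $c_\H$ has range combinatorially isomorphic to $\SS$ if and only if $\Theta$ is collapsed to a one-vertex graph in every link class, and $c_\H$ is a homotopy equivalence if and only if it is on every link class; so (1) and (2) become conjunctions over link classes of the same statement about a single $\Theta$. For one link class, let $F$ be the subgraph of the $1$-skeleton of $\SS^{\mathbf{\Pi}'}$ formed by the edges labelled by $\H$. If $F$ is not contained in $\Theta$, then assertion~(2) fails and, as one checks, $c_\H$ then collapses a standard-generator hyperplane that must persist in $\SS$, so (1) fails as well; thus we may assume $F\subseteq\Theta$. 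Collapsing each edge of $F$ identifies two vertices of $\Theta$ exactly when $F$ joins them, so the quotient has a single vertex if and only if $F$ is connected and meets every vertex of $\Theta$; and by the collapse criterion recalled just before the theorem it is a homotopy equivalence if and only if $F$ carries no essential cycle, equivalently --- since every circuit of $\Theta$ runs through a standard-generator edge and hence is essential in $\SS^\mathbf{\Pi}$ --- if and only if $F$ is a forest. Combining, the induced collapse on this link class is a homotopy equivalence with one-vertex quotient if and only if $F$ is a spanning tree of $\Theta$, which is precisely assertion~(2) for that link class; and in that case the quotient $\Theta/F$ is a rose on $\mathrm{rk}\,\pi_1(\Theta)$ petals, so that after regluing the Salvetti cells the range of $c_\H$ is combinatorially isomorphic to $\SS$. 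The reverse implication is read off the same chain of equivalences.

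I expect the genuine work to lie entirely in the first step, the structural input from \cite{untwistedos}: the decomposition of the vertex blow-up over link classes, the identification of the relevant data with the graph $\Theta$, the computation of $\pi_1(\Theta)$ through the canonical collapse, and the compatibility of carrier-retract hyperplanes and their collapses with this decomposition. Each of these calls for a careful unwinding of the explicit but intricate definition of the blow-up and of the canonical collapse. Once they are in hand, the single-link-class analysis above is the familiar graph-collapse fact behind the contractibility of Auter space, and the reassembly over link classes is formal.
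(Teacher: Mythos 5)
First, a point of order: the paper does not prove this statement. It is quoted, with attribution, as Theorem~4.11 of \cite{untwistedos}, so there is no in-paper argument to compare yours against; the closest the paper comes is Corollary~\ref{renamepartition}, which is deduced \emph{from} this theorem rather than a proof of it. Measured against the original, your overall strategy --- isolate one ``link class'' at a time and invoke the Culler--Vogtmann fact that collapsing a subgraph of a finite connected graph is a homotopy equivalence onto a one-vertex graph exactly when the subgraph is a spanning tree --- is the right one, and it is precisely why condition~(2) is phrased in terms of maximal subtrees of the graphs $\Theta\subseteq\SS^{\mathbf{\Pi}'}$.

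There is, however, a genuine gap in the structural claim on which you hang the reduction: the blow-up of the vertex of $\SS$ is \emph{not} in general a direct product $\prod_L B_L$ over link classes. You are right that two partitions sharing a link are never adjacent (a basepoint of one would lie in its own link), so their hyperplanes are non-transverse; but two compatible partitions with \emph{different} links need not be adjacent either, since compatibility also allows exactly one empty quadrant, and by Lemma~\ref{uniquecube} such a pair again has non-transverse hyperplanes. For instance, with $\Gamma$ on $\{a,b,c,d\}$ and the single edge $\{a,b\}$, the partitions $\left(\{a,c\},\{a^{-1},c^{-1},d,d^{-1}\},\{b,b^{-1}\}\right)$ (based at $a$) and $\left(\{c,a,a^{-1},b,b^{-1}\},\{c^{-1},d,d^{-1}\},\emptyset\right)$ (based at $c$) are compatible, non-adjacent, and have different links. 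Consequently distinct link classes genuinely interlock, the vertex blow-up does not split as a product, and the assertion that (1) and (2) ``become conjunctions over link classes'' is not formal --- it is essentially the content of the theorem. The repair, and the route taken in \cite{untwistedos}, is to first collapse all partition-labelled hyperplanes outside the class $\mathbf{\Pi}'$ (disjoint collapses commute, cf.\ Lemma~\ref{subfamilystrong}) and to work in the intermediate blow-up $\SS^{\mathbf{\Pi}'}$, where same-link partitions give pairwise non-transverse hyperplanes, so the vertex preimage of the canonical collapse really is a one-dimensional contractible complex, i.e.\ a tree, and your single-class graph argument applies; one must then separately check that conditions (1) and (2) are insensitive to this preliminary collapse. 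A related small inaccuracy: since $\H$ is a family of \emph{hyperplanes}, a generator-labelled element of $\H$ contributes \emph{all} of its dual edges to $F$, and the claim that every circuit of $\Theta$ is essential needs the tree structure of the partition part of $\Theta$ in $\SS^{\mathbf{\Pi}'}$, not just that circuits traverse generator edges.
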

A \emph{tree-like set of hyperplanes} is a family $\H$ of hyperplanes in $\SS^\mathbf{\Pi}$ satisfying the two equivalent properties above. Any subfamily of $\H$ can be collapsed to yield another blow-up of $\SS$, up to combinatorial isomorphism (we will provide a complete proof of this fact, see Corollary~\ref{renamepartition}). Disjoint subfamilies can be collapsed in any order with the same result.

The relation "having representatives which differ by a marking-preserving collapse" is a partial order on marked blow-ups of $\SS$, i.e.~elements of $K_\Gamma^{(0)}$, which is invariant under the action of $\unt(A_\Gamma)$. The \emph{spine of untwisted outer space} for $A_\Gamma$, denoted $K_\Gamma$, is the simplicial complex associated to this poset: simplices of dimension $k$ correspond to totally ordered sets of $k$ vertices. It is a finite-dimensional, locally finite, contractible complex, with a proper and cocompact action of $\unt(A_\Gamma)$ induced by the action on vertices.
\end{defi}

The following lemmas provide useful information on the structure of blow-ups of $\SS$.
\begin{lm}[\cite{twistedos}, Corollary~3.6]
\label{uniquecube}
In a blow-up $\SS^\mathbf{\Pi}$, maximal cubes are in one-to-one correspondance with maximal sets of pairwise adjacent labels in $\mathbf{\Pi}\sqcup V$. In particular, two labels are adjacent if and only if their hyperplanes are transverse.
\end{lm}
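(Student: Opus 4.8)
The plan is to reduce the statement to the structure of blow-ups described in \cite{untwistedos} and \cite{twistedos}, using the link condition together with Lemma~\ref{uniquecube}'s companion fact that a cube in a $\cat$ (or locally $\cat$) cube complex is determined by the pairwise-transverse family of hyperplanes dual to its edges. Concretely, a $k$-cube at a vertex is the same data as a set of $k$ edges at that vertex that pairwise span squares, i.e.\ a set of $k$ pairwise transverse hyperplanes that are simultaneously realized (have carriers meeting in a common cube). So the content to establish is: (i) in $\SS^\mathbf{\Pi}$, any set of pairwise transverse labels in $\mathbf{\Pi}\sqcup V$ is realized by an actual cube; (ii) two labels are transverse as hyperplanes if and only if they are adjacent in the combinatorial sense coming from the construction (adjacency of Whitehead partitions / the graph $\Gamma$); and (iii) distinct maximal cubes give distinct maximal sets, so the correspondence is bijective. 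Part (iii) is immediate once (i) and (ii) hold, because a maximal cube corresponds to a maximal pairwise-transverse family and a cube recovers its family of dual hyperplanes.

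First I would recall from the blow-up construction in \cite{untwistedos}, Section~3, the local model: the link of the (essentially unique up to the combinatorics) vertices of $\SS^\mathbf{\Pi}$ is assembled from the links of the hyperplanes labelled by $V$ and by the partitions in $\mathbf{\Pi}$, and by design Gromov's link condition holds — this is exactly what makes $\SS^\mathbf{\Pi}$ locally $\cat$. I would then invoke the explicit description of which pairs of labels are declared "adjacent" (for $v,w\in V$: adjacent in $\Gamma$; for $\mathbf{P}\in\mathbf{\Pi}$ and $v\in V$: $v\in lk(b)$ for a basepoint $b$ of $\mathbf{P}$, i.e.\ $\mathbf{P}$ adjacent to $v$; for $\mathbf{P},\mathbf{Q}\in\mathbf{\Pi}$: adjacent basepoints), and the fact — built into the compatibility hypotheses on $\mathbf{\Pi}$ and verified in \cite{untwistedos} — that "pairwise adjacent" is exactly the condition under which the corresponding product of intervals appears as a subcube of the blow-up. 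This gives the forward direction of (i)–(ii): a maximal pairwise-adjacent set of labels is realized by a cube, and it is maximal as a cube because adding any further label would violate adjacency hence cannot be transverse to all of them.

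For the converse direction — transverse hyperplanes have adjacent labels — I would argue that if $H, K$ are transverse in $\SS^\mathbf{\Pi}$ then their open carriers meet, so there is a square spanned by a dual edge of each; the labels of the edges of that square are, by the same local analysis, a pairwise-adjacent pair, so $H$ and $K$ have adjacent labels. Finally, for bijectivity, I would note that distinct maximal cubes cannot have the same dual-hyperplane set: two cubes with the same set of $k$ pairwise transverse dual hyperplanes, if they shared a vertex, would coincide by Lemma~\ref{square} (applied inductively, as in Corollary~\ref{product}); and in a blow-up any two vertices realizing the same maximal transverse family can be shown to coincide using the product structure of carriers — here one uses Corollary~\ref{product} to slide along the edges dual to these hyperplanes. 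The main obstacle I expect is not any single step but marshalling the precise combinatorial description of the blow-up from \cite{untwistedos}: verifying that "pairwise adjacent labels" is genuinely equivalent to "jointly realized by a cube" requires unwinding the definition of $\SS^\mathbf{\Pi}$ (in particular how the sides of compatible partitions interact), and it is precisely for this reason that the cleanest route is to cite \cite{twistedos}, Corollary~3.6 directly rather than reprove it; the remaining work — deducing the "in particular" clause about transversality versus adjacency — is then a short application of Lemma~\ref{square}.
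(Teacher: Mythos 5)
The paper gives no proof of this lemma at all: it is imported verbatim as \cite{twistedos}, Corollary~3.6, and your proposal ultimately lands in the same place, correctly identifying that the substantive combinatorial content (that ``pairwise adjacent labels'' is exactly the condition for joint realization by a cube, and that each maximal such set is realized by a \emph{unique} cube) lives in the blow-up construction and is best cited rather than re-derived. Your surrounding sketch --- deducing the ``in particular'' clause from the existence of a spanning square and Lemma~\ref{square}, and handling uniqueness via Corollary~\ref{product} --- is consistent with how the paper uses the result, so this is essentially the same approach.
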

\begin{lm}
    Every blow-up $\SS^\mathbf{\Pi}$ is a special cube complex.
\end{lm}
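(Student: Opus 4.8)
The plan is to verify directly that a blow-up $\SS^\mathbf{\Pi}$, equipped with its $A_\Gamma$-action on the universal cover $\uc{\SS^\mathbf{\Pi}}$, satisfies the four conditions of Definition~\ref{special}: 2-sidedness, no self-intersection, no direct self-osculation, and no inter-osculation. First I would work upstairs, pulling everything to the universal cover $X = \uc{\SS^\mathbf{\Pi}}$, since speciality of the quotient $\SS^\mathbf{\Pi} = X/A_\Gamma$ is precisely cospeciality of this free action. The crucial structural input is Lemma~\ref{uniquecube}, which tells us that transversality of hyperplanes in $\SS^\mathbf{\Pi}$ (equivalently, in $X$, after keeping track of labels) is governed entirely by \emph{adjacency of labels} in $\mathbf{\Pi}\sqcup V$; together with the fact that the hyperplanes of $\SS^\mathbf{\Pi}$ are oriented and labelled bijectively by $V\sqcup\mathbf{\Pi}$ (Definition~\ref{defspine}), this makes each of the local incidence conditions into a finite combinatorial check on labels and sides of Whitehead partitions.

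The key steps, in order, are: (1) \textbf{2-sidedness.} Each hyperplane of $X$ carries an orientation inherited from the oriented labelling of $\SS^\mathbf{\Pi}$; an element $g\in A_\Gamma$ stabilizing a hyperplane $\tilde H$ of $X$ projects to a label-preserving, hence orientation-preserving, self-map of the corresponding hyperplane of $\SS^\mathbf{\Pi}$, so $g$ cannot swap the halfspaces of $\tilde H$. (2) \textbf{No self-intersection.} If $\tilde H$ and $g\tilde H$ were transverse in $X$ for some $g$, their images in $\SS^\mathbf{\Pi}$ would be a pair of transverse hyperplanes with the same label; by Lemma~\ref{uniquecube} a label is never adjacent to itself (partitions and generators are not adjacent to themselves in the sense defined — a basepoint is not adjacent to itself in $\Gamma$), contradiction. (3) \textbf{No direct self-osculation} and (4) \textbf{no inter-osculation} are the substantive cases: here I would use the explicit description of the carriers of hyperplanes in a blow-up from \cite{untwistedos} — a hyperplane labelled by a generator $v$ has carrier modelled on the Salvetti picture, while a hyperplane labelled by a partition $\mathbf{P}$ has a carrier built from the sides $P,P^*$ and link $L$ — and show that whenever two such carriers (for $\tilde H$ and $g\tilde K$) meet in $X$, the combinatorial data (which sides the basepoints and links sit in, governed by the compatibility of the partitions in $\mathbf{\Pi}$) forces the halfspace incidences demanded by the definition. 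Osculation of a partition hyperplane with itself is obstructed by the condition ``$P$ and $P^*$ have at least two elements each'' in the definition of based Whitehead partition, which guarantees enough room on each side so that adjacent carriers share a square; inter-osculation is obstructed because compatibility of $\mathbf{P}$ and $\mathbf{Q}$ (either adjacency, or exactly one of the four intersections $P\cap Q,\dots$ empty) is exactly the condition ruling out the ``transverse carriers, non-transverse hyperplanes'' configuration.

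I expect the main obstacle to be step (4), no inter-osculation, and to a lesser extent step (3): these require unwinding the precise cell structure of the carrier of a blow-up hyperplane from \cite{untwistedos}, rather than the purely combinatorial Lemma~\ref{uniquecube}, and one must carefully translate ``carriers of $g\tilde H$ and $\tilde K$ intersect'' into a statement about the relative position of the basepoints of the underlying partitions. An alternative, cleaner route — which I would pursue if the direct carrier analysis gets unwieldy — is to observe that $\SS$ itself is special, that each blow-up $\SS^\mathbf{\Pi}$ admits a canonical collapse $c_\mathbf{\Pi}\colon\SS^\mathbf{\Pi}\to\SS$, and then to argue that the blow-up construction is designed precisely so that the local separation/osculation patterns are inherited from $\SS$ through $c_\mathbf{\Pi}$ together with the pairwise compatibility of the partitions in $\mathbf{\Pi}$; concretely, one checks that any two hyperplanes of $\SS^\mathbf{\Pi}$ with intersecting carriers either map to distinct hyperplanes of $\SS$ (and speciality follows from that of $\SS$ plus compatibility) or to the same hyperplane (handled as in steps (2)--(3) above). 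Either way, the conclusion that $\SS^\mathbf{\Pi}$ is locally $\cat$ is already part of the blow-up construction recalled in Definition~\ref{defspine}, so it remains only to record the four cospeciality conditions.
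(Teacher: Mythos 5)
Your overall strategy --- verifying the four cospeciality conditions of Definition~\ref{special} directly, using the oriented labelling of hyperplanes by $V\sqcup\mathbf{\Pi}$ and Lemma~\ref{uniquecube} --- is the same as the paper's, and your treatment of 2-sidedness is exactly the paper's. Your argument for no self-intersection (a square would force a label of $\mathbf{\Pi}\sqcup V$ to be adjacent to itself, which is impossible since $\Gamma$ is simple) is a valid, mildly different route from the paper's.

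The gap is in the two osculation conditions, where the obstructions you propose are not the ones that do the work. For no direct self-osculation you invoke the requirement that $P$ and $P^*$ have at least two elements each; that is a non-degeneracy hypothesis on Whitehead partitions and plays no role here. The relevant fact --- which also re-proves no self-intersection uniformly --- is that by construction of $\SS^\mathbf{\Pi}$ there is at most one edge with a given \emph{oriented} label incident at each vertex, so two edges dual to $H$ and $gH$ at a common vertex and pointing into corresponding halfspaces must coincide, forcing $g\in Stab(H)$. For no inter-osculation you appeal to the compatibility condition on pairs of partitions in $\mathbf{\Pi}$; but compatibility is what makes the blow-up well-defined, not what prevents osculation, and the phrase ``transverse carriers, non-transverse hyperplanes'' misstates the configuration to be excluded: inter-osculation concerns two hyperplanes that \emph{are} transverse but whose carriers also meet at a vertex where the dual edges fail to span a square. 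The actual mechanism is purely local: if two edges dual to transverse hyperplanes are incident at the same vertex, their labels are adjacent by Lemma~\ref{uniquecube}, and by construction of $\SS^\mathbf{\Pi}$ (together with Lemma~3.11 of \cite{untwistedos}) any two edges at a vertex with adjacent labels lie on a $4$-cycle on which a square is glued. This single observation disposes of inter-osculation with no analysis of carrier cell structure, so the carrier unwinding you flag as the main obstacle is unnecessary; as written, your justifications for conditions (3) and (4) would not go through.
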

Two-sidedness of hyperplanes comes from the fact that labels are oriented by construction of $\SS^\mathbf{\Pi}$. Also by construction of $\SS^\mathbf{\Pi}$, there is at most one edge with a given oriented label incident at each vertex, ruling out self-intersection and direct self-osculation of hyperplanes. Absence of inter-osculation comes from Lemma~\ref{uniquecube} as well as \cite{untwistedos}, Lemma~3.11: if two edges dual to transverse hyperplanes are incident to the same vertex, their labels are adjacent. Hence the edges are part of a $4$-cycle on which a squared is glued by construction of $\SS^\mathbf{\Pi}$.

\section{Properties of hyperplane collapses}

\begin{defi}
\label{defcollapse}
    Let $Z$ be a $\cat$ cube complex. Let $\W$ be the set of hyperplanes of $Z$ and $\H\subseteq \W$. The \emph{collapse of $\H$} is the restriction quotient $c\colon Z \to X(Z, \W\setminus \H)$.
\end{defi}
\begin{lm}
\label{collapse1}
    Let $Z$ be a $\cat$ cube complex with hyperplane set $\W$. Let $\H\subseteq \W$ and let $c \colon Z\to Y$ be the collapse of $\H$. The following hold:
    \begin{enumerate}
        \item $c$ is surjective, and cube preimages are convex.
        \item $c$ preserves medians.
        \item $c$ maps each $n$-cube of $Z$ onto a $k$-cube of $Y$, with $k\leq n$. Up to isometries, the restriction of $c$ to such an $n$-cube corresponds to the restriction $[0,1]^n\to [0,1]^k$ of the orthogonal projection $\RR^n\to \RR^k$ with kernel spanned by the basis vectors dual to hyperplanes of $\H$. 
        \item $c$ is the quotient map corresponding to the equivalence relation induced by the orthogonal projections on cubes defined above.
        \item Assume a group $G$ acts on $Z$ by combinatorial isometries and $\H$ is $G$-invariant. Then there is a natural action of $G$ on $Y$ making $c$ equivariant. If $G$ acts cocompactly on $Z$, then $G$ acts cocompactly on $Y$.
    \end{enumerate}
\end{lm}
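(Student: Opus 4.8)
The plan is to prove the five assertions largely by reducing everything to the cube-by-cube description in (3), and to the wallspace description of the restriction quotient already set up in Definition~\ref{restriction}. Since $c\colon Z\to Y$ is by definition the restriction quotient associated with $\W\setminus\H$, on the level of vertices it takes a consistent orientation of all hyperplanes in $\W$ to the same orientation restricted to $\W\setminus\H$. I would begin with (3), as it is the local model underpinning the rest. Fix an $n$-cube $Q$ of $Z$; it is dual to $n$ pairwise transverse hyperplanes, say $H_1,\dots,H_n$, and its vertices are exactly the $2^n$ ways of orienting these. Under $c$, two vertices of $Q$ have the same image iff they agree on all $H_i\notin\H$; if exactly $k$ of the $H_i$ lie outside $\H$, this identifies the $2^n$ vertices of $Q$ in groups of $2^{n-k}$, giving $2^k$ images, and two images are adjacent iff they differ on a single $H_i\notin\H$. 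Identifying $Q$ with $[0,1]^n$ so that the $i$-th coordinate is dual to $H_i$, this is precisely the orthogonal projection $[0,1]^n\to[0,1]^k$ killing the coordinates dual to $\H$; that $c$ extends affinely on cubes is exactly how the restriction quotient is defined, so the affine map agrees with this projection. This also immediately gives (4): the equivalence relation generated on $Z$ by "collapse each edge dual to $\H$" is, restricted to each cube, the fibre relation of that projection, and $c$ is by construction the quotient by the relation generated by these.

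For (1): surjectivity follows because any vertex of $Y$ is, by the definition of Sageev's construction applied to $(Z^{(0)},\W\setminus\H)$, a consistent orientation of $\W\setminus\H$; but one checks it extends to a consistent orientation of all of $\W$ (extend across the hyperplanes of $\H$ one at a time using the fact that each such hyperplane already has a well-defined side "toward" the partial orientation, or more cleanly, use that $Z$ is nonempty and the image of a DCC orientation remains DCC) — alternatively surjectivity is a formal property of restriction quotients. For convexity of cube preimages, I would use (2) together with the characterization of geodesic convexity via medians: a fibre $c^{-1}(p)$ of a vertex is an intersection of halfspaces of $Z$, namely the halfspaces of the hyperplanes in $\W\setminus\H$ determined by $p$, intersected with $Z$; such an intersection is convex by the convexity of halfspaces (stated in the Convexity definition), and the preimage of a cube is the union over its vertices, which is again an intersection of halfspaces (the halfspaces of those hyperplanes in $\W\setminus\H$ that separate the cube), hence convex. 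For (2), median-preservation: since $c$ is a restriction quotient the halfspaces of $Y$ pull back to halfspaces of $Z$, and by Lemma~\ref{edgepath} the median of three vertices is characterized by which side of each hyperplane it lies on (the "majority vote"); the majority vote for $\W\setminus\H$ downstairs is the image of the majority vote for $\W$ upstairs restricted to $\W\setminus\H$, so $c(\mu_Z(x,y,z))=\mu_Y(c(x),c(y),c(z))$. I should double-check that $c$ of a vertex is computed by this restricted majority vote, which follows from the vertex description of the restriction quotient.

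For (5): if $G$ acts on $Z$ by combinatorial isometries it permutes $\W$, and $\H$ being $G$-invariant means it preserves $\W\setminus\H$, so $G$ acts on the wallspace $(Z^{(0)},\W\setminus\H)$ and hence, by the functoriality of Sageev's construction, on $Y=X(Z^{(0)},\W\setminus\H)$, with $c$ equivariant; concretely $g\cdot c(x):=c(g\cdot x)$ is well-defined precisely because $g$ respects the fibre relation of (4). Cocompactness: a compact fundamental domain upstairs is a finite subcomplex $D$ with $G\cdot D=Z$; then $c(D)$ is a finite subcomplex of $Y$ (finitely many cubes, each the image of a cube of $D$ by (3)) with $G\cdot c(D)=c(G\cdot D)=c(Z)=Y$ by (1), so $c(D)$ is a compact fundamental domain downstairs. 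The main obstacle I anticipate is purely bookkeeping: being careful that the abstract restriction-quotient vertex map really does coincide with "collapse edges dual to $\H$" — i.e. that no two vertices in distinct fibres of the edge-collapse relation get identified and none in the same fibre stay distinct — and that surjectivity onto $Y$ holds (that every consistent orientation of $\W\setminus\H$ extends to one of $\W$); both are standard facts about restriction quotients but deserve an explicit line. None of the steps is genuinely hard; the work is in organizing the reductions so that (3) and (4) do the heavy lifting for (1), (2) and (5).
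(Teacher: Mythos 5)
Your overall strategy matches the paper's: everything is run through the wallspace description of the restriction quotient, with the cube-by-cube projection model of (3) doing most of the work. Items (2), (3) and (5) are essentially fine (for (2) the paper argues via images of geodesics and uniqueness of the median rather than the majority-vote characterization, but your route works given Lemma~\ref{edgepath}).

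The genuine gap is in (1), and it is exactly the point where the paper spends almost all of its effort. Surjectivity of $c$ is \emph{not} just surjectivity on vertices: the image of $c$ is a subcomplex of $Y$ (a union of cubes, by (3)), and a subcomplex containing every vertex need not be all of $Y$ — one must produce, for each $k$-cube $C$ of $Y$ dual to $H_1,\dots,H_k\in\W\setminus\H$, an actual $k$-cube of $Z$ dual to $H_1,\dots,H_k$ lying in all the halfspaces that define $C$. Your sketch (``extend a consistent orientation of $\W\setminus\H$ to one of $\W$'') only addresses $k=0$, and even there it is not spelled out how the extension is chosen so as to satisfy the finiteness condition; deferring to ``a formal property of restriction quotients'' is not available since non-emptiness of cube preimages is used later in the paper (e.g.\ in Lemma~\ref{collapse}) and is precisely what is being established here. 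The paper's argument is a minimization-and-pushing one: choose a $k$-cube $C'$ of $Z$ dual to $H_1,\dots,H_k$ minimizing the (finite) number of defining halfspaces of $C$ not containing $C'$; if some hyperplane $H$ violates this, take it closest to $C'$, show $\s(C'\mid H)=\emptyset$, and use Lemma~\ref{square} and the link condition to slide $C'$ across an edge dual to $H$ (which is transverse to all the $H_i$), contradicting minimality. Some argument of this type is needed. Secondarily, in (4) you assert that $c$ is ``by construction'' the quotient by the relation generated by the cube projections; the nontrivial direction ($c(x)=c(y)$ implies $x,y$ are chained by cube identifications) requires knowing that the fibre is connected through cells whose edges are dual only to $\H$ — the paper gets this from the product decomposition $c^{-1}(C)\simeq F\times C$ with $F$ connected, which in turn comes out of the proof of (1). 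You flag this as bookkeeping, and it is fillable from the convexity of fibres plus Lemma~\ref{edgepath}, but as written it is asserted rather than proved.
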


\begin{proof}
    $(1)$  Recall that a $k$-cube $C$ in $Y$ corresponds to a choice of $k$ pairwise transverse hyperplanes $H_1,\dots, H_k\in \W\setminus \H$ together with a choice of halfspaces for all hyperplanes in $\W\setminus (\H\cup \{H_1,\dots,H_k\})$ such that every vertex of $Z$ lies in all but finitely many of the chosen halfspaces, every two chosen halfspaces intersect, and every chosen halfspace intersects with both halfspaces of every $H_i$. Note that the preimage $c^{-1}(C)$ is the intersection of all such chosen halfspaces, hence is convex. It remains to prove that it is non-empty.
    
    Let $C'\in Z$ be a $k$-cube dual to $H_1,\dots, H_k$. The existence of $C'$ is guaranteed by the fact that $H_1,\dots,H_k$ are pairwise transverse, and for each $H\in \W\setminus \{H_1,\dots, H_k\}$, $C'$ is entirely contained in a halfspace of $H$. Choose further $C'$ such that the (finite) number of chosen halfspaces for hyperplanes in $\W\setminus (\H\cup\{H_1,\dots,H_k\})$ that do not contain $C'$ is minimal. Assume the existence of $H\in \W\setminus (\H\cup\{H_1,\dots,H_k\})$ whose chosen halfspace does not contain $C'$. Note that in that case, both halfspaces of $H$ intersect both halfspaces of each of the $H_i$, meaning that $H,H_1,\dots, H_k$ are pairwise transverse. Taking such $H$ as close as possible from $C'$, $\s(C'\mid H)\setminus \H$ only contains hyperplanes $H'$ whose chosen halfspace contains $C'$. Such a hyperplane $H'$ cannot exist as its chosen halfspace would not intersect the chosen halfspace for $H$, contradicting the compatibility of chosen halfspaces. Therefore, $\s(C'\mid H) = \emptyset$, i.e.~there exists an edge $e$ dual to $H$ and incident to a vertex of $C'$. Since $H$ is transverse to all the $H_i$, $e$ spans a product with $C'$ by Lemma~\ref{square} and the link condition. The cube $C''$ parallel to $C'$ at the other end of $e$ is contained in one more chosen halfspace, contradicting the minimality of $C'$. Thus $C'$ lies in the chosen halfspace of every $H\in \W\setminus(\H\cup\{H_1,\dots,H_k\})$, i.e.~$c(C') = C$, proving surjectivity.

    $(2)$ By Lemma~\ref{edgepath}, $c$ maps geodesic edge paths to geodesic edge paths. Thus, for every vertices $z_1,z_2,z_3$ of $Z$, $c(\mu(z_1,z_2,z_3))$ lies on geodesics joining $c(z_i)$ with $c(z_j)$ for $i\neq j$. The result follows from uniqueness of the median in $Y$.

    $(3)$ Let $C$ be an $n$-cube of $Z$. $C_1\subseteq C$ be a maximal subcube that is not dual to any hyperplane of $\H$, with dimension $k$, and $C_2$ be a maximal subcube dual to only hyperplanes of $\H$. Then $C_2$ has dimension $n-k$, and $C_1$ and $C_2$ span $C$ as a product. Clearly, distinct vertices of $C_1$ are mapped to distinct vertices of $Y$ by $c$, and each copy of $C_2$ at a vertex of $C_1$ is mapped to a single vertex. Recall that $c$ is affine on $C$. This means that $c(C_1) = c(C)$, and $c$ is injective on $C_1$. Moreover, $c$ coincides with the orthogonal projection described in the statement on all vertices of $C$. Since $c$ is affine, $c$ equals that orthogonal projection.

    $(4)$ We prove that for $x,y\in Z$, the equality $c(x)=c(y)$ holds if and only if there exist a sequence of points $x=x_0,\dots,x_n=y$ such that for all $i$, there exists a cube $C$ containing $x_i$ and $x_{i+1}$ and the orthogonal projection from $C$ sends $x_i$ and $x_{i+1}$ to the same point. If there exists such a sequence $x_0,\dots, x_n$, it is straightforward that $c(x_i) = c(x_{i+1})$ for all $i$. Conversely, if $c(x) = c(y)$, let $C$ be the lowest-dimensional cube of $Y$ containing $c(x) = c(y)$. By the proof of $(1)$, $c^{-1}(C)$ decomposes as a product $F\times C$ where $F$ is a connected subcomplex with edges dual to hyperplanes of $\H$ only. The restriction of $c$ to $F\times C$ is the second coordinate projection. The points $x$ and $y$ write $(f_x,c(x))$, $(f_y, c(y))$ in these coordinates. By connectedness of $F$, there exists a sequence $f_x = f_0, \dots f_n = f_y$ of points of $F$ such that $f_i$ and $f_{i+1}$ are in a common cube for all $i$. Setting $x_i= (f_i, c(x))$ satisfies the assumptions.

    $(5)$ The action of $G$ on $Z$ induces an action on $\W\setminus \H$, and from there an action on the set of compatible choices of halfspaces for all hyperplanes in $\W\setminus \H$, i.e.~an action on $Y^{(0)}$. It is clear that this action extends to $Y$ and $c$ is equivariant. Since $c$ is surjective, it induces a surjection $Z/G\to Y/G$: if $Z/G$ is compact, $Y/G$ is compact as well.
\end{proof}

Recall that an isometry of a $\cat$ cube complex is a \emph{cube inversion} if it stabilizes some (finite-dimensional) cube without fixing it pointwise. It is a \emph{hyperplane inversion} if it stabilizes a hyperplane while exchanging its halfspaces.

\begin{defi}
\label{collapses}
    Let $G$ be a group acting by combinatorial isometries without cube inversions on a $\cat$ cube complex $Z$. Let $\W$ denote the set of hyperplanes of $Z$ with the induced action of $G$, and let $\H\subseteq\W$ be $G$-invariant. 
    We say that $\H$ is:
    \begin{itemize}
        \item \emph{weakly collapsible} if for every vertex $z$ of $Z$, and every $g\in G\setminus Stab(z)$, the separator $\s(z\mid gz)$ is not contained in $\H$;
        \item \emph{strongly collapsible} if for every $H\in \H$, and every $g\in G\setminus Stab(H)$, the separator $\s(H\mid gH)$ is not contained in $\H$.
    \end{itemize}
    A \emph{strong (resp. weak) collapse map} is the collapse of a strongly (resp. weakly) collapsible $G$-invariant family of hyperplanes.
\end{defi}

We will often implicitly consider collapse maps modulo $G$-equivariant combinatorial isomorphisms of their domain and range. Given a collapse (resp. weak collapse, strong collapse) $c\colon Z\to Y$ of a $G$-invariant family $\H$, we will often refer to induced map of quotients $Z/G\to Y/G$ as the collapse (resp. weak collapse, strong collapse) of the quotient family $\H/G$.

\begin{rem}
    The weak collapse condition is equivalent to asking that $Stab(c(z))\subseteq Stab(z)$ for every vertex $z$ of $Z$. The reverse inclusion is always true by equivariance of $c$. In particular, when $G$ acts freely on $Z$, $c$ is a weak collapse if and only if $G$ acts freely on $Y$, if and only if $c$ induces a homotopy equivalence $Z/G\to Y/G$ (see Lemma~\ref{collapse}).

    The strong collapse condition is equivalent to asking that for every hyperplane $H\in \H$ with carrier $K$, $c(K)$ is disjoint from its distinct $G$-translates in $Y$. It is also equivalent to asking that the immersion $c(K)/Stab(c(K)) \to Y/G$ is an embedding.
    
    When $G$ acts freely, it is clear from Lemma~\ref{collapse1} that the map of quotients $Z/G\to Y/G$ induced by $c$ is the same as the collapse defined in \cite{untwistedos} (Definitions~4.1 and 4.3) when the family $\H/G$ has compatible carriers (i.e.~when $\H$ is weakly collapsible and carriers of hyperplanes of $\H/G$ decompose as products in $Z/G$, one factor being the dual edge).

    Having compatible carriers in the quotient $Z/G$ implies being weakly collapsible, but the converse does not hold (the weak collapse condition does not require that carriers embed). When $G$ does not invert hyperplanes of $\H$ (i.e.~the corresponding quotient hyperplanes in $Z/G$ are two-sided), being strongly collapsible implies having compatible carriers in the quotient (see Lemma~\ref{strongimpliesweak}), but the converse does not hold either: consider for example a family of two parallel hyperplanes in the first subdivision of a single square with two opposite vertices identified.
    
    Note that our main interest will be free, cospecial actions, which have no cube inversions or hyperplane inversions.
\end{rem}

In Lemma~\ref{strongdownstairs}, we will give a characterization of strong collapses in the quotient. First, we need to establish some properties of collapse maps.

\begin{lm}
\label{strongimpliesweak}
    Let $G$ act on a $\cat$ cube complex $Z$ without cube inversions. Let $\H$ be a strongly collapsible family of hyperplanes in $Z$. Then $\H$ is weakly collapsible.
\end{lm}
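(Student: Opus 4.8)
The plan is to prove the contrapositive by induction on a distance. Say that a couple $(z,g)$, with $z$ a vertex of $Z$ and $g\in G$, is a \emph{bad pair} if $gz\neq z$ and $\s(z\mid gz)\subseteq\H$; failing to be weakly collapsible means exactly that a bad pair exists. I will prove, by strong induction on $n$, the statement $P(n)$: \emph{for every action without cube inversions of a group on a $\cat$ cube complex and every strongly collapsible invariant family of hyperplanes, there is no bad pair $(z,g)$ with $d(z,gz)\leq n$.} Since a bad pair always has $d(z,gz)\geq 1$, establishing $P(n)$ for all $n$ contradicts the existence of a bad pair for $(Z,\H)$ and proves the lemma ($P(0)$ is vacuous). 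Note that this induction is on the distance $n$, not on the dimension of $Z$, so no finiteness hypothesis is needed.

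For the inductive step, fix a bad pair $(z,g)$ with $d(z,gz)=n$ (smaller distances are covered by $P(n-1)$). Since $\s(z\mid gz)\neq\emptyset$, pick $H\in\s(z\mid gz)$ \emph{closest to $z$}, namely the dual hyperplane of the first edge of a geodesic from $z$ to $gz$, so that $z$ lies in the carrier of $H$. Any $H'\in\s(H\mid gH)$ has the carrier of $H$ (containing $z$) in one of its halfspaces and the carrier of $gH$ (containing $gz$) in the other, hence $H'\in\s(z\mid gz)\subseteq\H$; thus $\s(H\mid gH)\subseteq\H$. If $g\notin Stab(H)$ this already contradicts strong collapsibility of $\H$, and we are done. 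So we may assume $g\in Stab(H)$; since $H\in\s(z\mid gz)$, the element $g$ must exchange the two halfspaces of $H$, i.e. $g$ inverts $H$.

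This is where the hypothesis ``no cube inversions'' enters. Write the carrier of $H$ as a product $H\times[0,1]$, with $H$ the hyperplane viewed as a $\cat$ cube complex and the interval the direction dual to $H$ (this product structure follows from Corollary~\ref{product}). As $g\in Stab(H)$ it preserves this decomposition, and as it inverts $H$ it acts as $g'\times\sigma$, where $g'$ is an induced isometry of $H$ and $\sigma$ is the reflection of $[0,1]$; write $z=(z_0,t)$. If $g'z_0=z_0$ then $g$ swaps the two endpoints of the edge $\{z_0\}\times[0,1]$ incident to $z$, a cube inversion, which is excluded; this in particular settles $n=1$, where the geodesic from $z$ to $gz$ is the single edge dual to $H$ and forces $g'z_0=z_0$. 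So assume $g'z_0\neq z_0$. Since $g$ stabilizes the convex subcomplex $H\times[0,1]$, both $z$ and $gz=(g'z_0,\sigma t)$ lie in it, so a geodesic between them stays inside and one reads off
\[\s(z\mid gz)=\{H\}\ \sqcup\ \{\,K\times[0,1]\ \mid\ K\in\s_H(z_0\mid g'z_0)\,\},\]
where $\s_H$ denotes separators computed in $H$ and $K\times[0,1]$ is regarded as a hyperplane of $Z$. Setting $\H_H=\{K\text{ hyperplane of }H\mid K\times[0,1]\in\H\}$, we obtain $\s_H(z_0\mid g'z_0)\subseteq\H_H$ with $g'z_0\neq z_0$, so $(z_0,g')$ is a bad pair for the action of $Stab(H)$ on $H$ and the family $\H_H$, with $d_H(z_0,g'z_0)=n-1$.

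The induction then closes once this new datum is checked to satisfy the standing hypotheses. That $\H_H$ is $Stab(H)$-invariant and that $Stab(H)$ acts on $H$ without cube inversions follow directly from the product structure (an inversion of a cube of $H$ produces an inversion of the corresponding product cube of $Z$). The one substantial point, which I expect to be the main obstacle, is that $\H_H$ is \emph{strongly} collapsible for the $Stab(H)$-action: given $K\in\H_H$ and $h\in Stab(H)$ with $hK\neq K$, one lets $h$ act on $H\times[0,1]$, notes that $h(K\times[0,1])=(hK)\times[0,1]$ is a distinct $G$-translate of $K\times[0,1]\in\H$, invokes strong collapsibility of $\H$ to produce a hyperplane $M\notin\H$ in $\s_Z(K\times[0,1]\mid hK\times[0,1])$, and then checks — using that $H$ is transverse to $K\times[0,1]$, so $M\neq H$, together with the product structure of the carriers involved — that $M$ is necessarily of the form $L\times[0,1]$ for a hyperplane $L$ of $H$ with $L\notin\H_H$ and $L\in\s_H(K\mid hK)$; hence $\s_H(K\mid hK)\not\subseteq\H_H$. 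Granting this, $P(n-1)$ applied to $(Stab(H)\curvearrowright H,\ \H_H)$ forbids the bad pair $(z_0,g')$ at distance $n-1$, the desired contradiction, completing the induction.
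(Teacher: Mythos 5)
Your proof is correct, and its opening coincides exactly with the paper's: both take $H$ dual to the first edge of a geodesic $p$ from $z$ to $gz$, observe that $\s(H\mid gH)\subseteq\s(z\mid gz)\subseteq\H$ forces $g\in Stab(H)$ with exchanged halfspaces, and use convexity to confine $p$ to the carrier of $H$. Where you diverge is in how the recursion is closed. The paper never leaves $Z$: iterating the same argument inside the $g$-invariant carrier, it shows that \emph{all} hyperplanes of $\s(z\mid gz)$ are pairwise transverse and each is stabilized by $g$ with exchanged halfspaces, so that the interval between $z$ and $gz$ is the $1$-skeleton of a single cube inverted by $g$. You instead run a strong induction on $d(z,gz)$ whose inductive step replaces the data $(Z,G,\H)$ by $(H,\,Stab(H),\,\H_H)$ via the product structure of the carrier, bottoming out in an inverted edge. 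Your packaging is easier to make fully rigorous than the paper's terse ``proceeding inductively,'' but it costs an extra verification --- that $\H_H$ is strongly collapsible for the induced $Stab(H)$-action --- which you correctly reduce to the observation that any uncollapsed hyperplane $M$ separating the two transverse-to-$H$ hyperplanes must cross the connected convex carrier of $H$, hence is itself transverse to $H$ and restricts to a hyperplane of $H$ outside $\H_H$ separating $K$ from $hK$. The paper's route avoids that check entirely by staying in $Z$; yours avoids having to assemble the final inverted cube. Both arguments ultimately rest on the same two facts: strong collapsibility pins $g$ into $Stab(H)$, and the no-cube-inversion hypothesis kills the resulting inverted cube or edge.
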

\begin{proof}
    Let $z\in Z^{(0)}$, $g\in G$ and assume that $\s(z\mid gz)\subseteq \H$, yet $gz\neq z$. Let $H$ be dual to the first edge of a geodesic edge path $p$ joining $z$ to $gz$. Then $\s(H\mid gH)\subseteq \s(z\mid gz)\subseteq \H$. Therefore $g\in Stab(H)$ by the strong collapse assumption. Since $p$ is geodesic, $z$ and $gz$ are not in the same halfspace of $H$, thus $g$ exchanges the halfspaces of $H$. By convexity, $p$ lies in the carrier of $H$ entirely. If $p$ is a single edge, this edge is a cube inverted by $g$, a contradiction. Otherwise, there exists another geodesic path $p'$ in the carrier of $H$ with the same endpoints as $p$ whose first edge is dual to a hyperplane transverse to $H$. Proceeding inductively in the $g$-invariant carrier of $H$, we get that all hyperplanes in $\s(z\mid gz)$ are pairwise transverse, and stabilized by $g$ with exchanged halfspaces. Thus $g^2z = z$ and the union of geodesic edge paths joining $z$ to $gz$ is then the $1$-skeleton of a cube of $Z$ inverted by $g$, a contradiction.
\end{proof}

\begin{lm}
\label{collapse}
    Let $G$ act on a $\cat$ cube complex $Z$ without cube inversions. Let $\H$ be a weakly collapsible family of hyperplanes in $Z$ with collapse $c\colon Z\to Y$. The following hold:
    \begin{enumerate}
        \item The action of $G$ on $Y$ is without cube inversions.
        \item If the action on $Z$ is free, so is the action on $Y$.
        \item If the action on $Z$ is  free, $c$ induces a homotopy equivalence $Z/G\to Y/G$ whose vertex preimages are locally convex subcomplexes in $Z/G$ and $\cat$.
    \end{enumerate}
\end{lm}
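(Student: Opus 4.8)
The plan is to prove (1) first — it is the crux — then deduce (2) from it together with the reformulation of weak collapsibility, and obtain (3) formally once (1) and (2) are in hand. Throughout I use the reformulation recorded in the Remark: $\H$ weakly collapsible means $Stab_G(c(z))=Stab_G(z)$ for every vertex $z$ of $Z$; since $c$ is surjective on vertices, this says exactly that a group element fixing a vertex $w$ of $Y$ fixes every lift of $w$. For (1), suppose toward a contradiction that some $g\in G$ is a cube inversion of $Y$, stabilizing a cube $C$ without fixing it pointwise, and let $N$ be the order of the finite-order signed permutation $g|_C$, so $g^N$ fixes $C$ pointwise. Every vertex of the convex subcomplex $W:=c^{-1}(C)$ (convex by \cref{collapse1}) maps to a vertex of $C$, hence is fixed by $g^N$ by the reformulation; an automorphism of a cube complex fixing all vertices is the identity, so $g$ acts on the $\cat$ cube complex $W$ through the finite cyclic group $\langle g|_W\rangle$, which is nontrivial (if $g|_W=\mathrm{id}$ then $g$ fixes $c(W)=C$ pointwise) and acts without cube inversions since cubes of $W$ are cubes of $Z$. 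A finite group acting without cube inversions on a $\cat$ cube complex fixes a vertex (a fixed point exists by completeness of the $\cat$ metric, and the minimal cube containing it is stabilized, hence fixed pointwise). So $g$ fixes a vertex $\tilde v\in W$, and $v:=c(\tilde v)$ is a $g$-fixed corner of $C$; since $g|_C\neq\mathrm{id}$, relative to $v$ it acts with no sign flips, i.e.\ as a nontrivial permutation of the hyperplanes dual to $C$. Replacing $C$ by the face of $C$ at $v$ spanned by the directions of one nontrivial cycle of this permutation, we may assume $g$ fixes the corner $v$ and cyclically permutes the dual hyperplanes $H_1,\dots,H_d$ of $C$, with $d\ge 2$, compatibly with the orientations pointing away from $v$ (so $gH_i=H_{i+1}$ and $gN_Z(H_i)=N_Z(H_{i+1})$, indices mod $d$).

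The contradiction comes from lifting this configuration to $Z$. First, the $H_i$, being pairwise transverse in $Y$, are pairwise transverse as hyperplanes of $Z$: halfspaces of $Z$ surject onto halfspaces of $Y$ under $c$, so a vertex of $Y$ in an intersection of two halfspaces lifts to a vertex of $Z$ in the corresponding intersection. Next, a short "moving within the fibre" argument shows some lift $\tilde v$ of $v$ lies in $N_Z(H_1)$: take an $H_1$-edge at $v$ with other endpoint $v'$ and choose lifts $\tilde v\in c^{-1}(v)$, $\tilde v'\in c^{-1}(v')$ minimizing $\card{\s(\tilde v\mid\tilde v')}$; since $\s(\tilde v\mid\tilde v')\cap(\W\setminus\H)=\s(v\mid v')=\{H_1\}$, if $\s(\tilde v\mid\tilde v')\neq\{H_1\}$ the first edge of a geodesic $[\tilde v,\tilde v']$ is dual to some $J\in\H$, and crossing it stays in $c^{-1}(v)$ while shrinking the separator by one, contradicting minimality — so in fact $\tilde v$ is joined to $\tilde v'$ by an $H_1$-edge, i.e.\ $\tilde v\in N_Z(H_1)$. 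By the reformulation $\tilde v$ is fixed by $g$, and then $\tilde v\in N_Z(H_1)$ together with $gN_Z(H_i)=N_Z(H_{i+1})$ forces $\tilde v\in N_Z(H_i)$ for all $i$. Thus $\tilde v$ carries edges $\tilde e_1,\dots,\tilde e_d$ dual to $H_1,\dots,H_d$, which span a $d$-cube $\hat C$ at $\tilde v$ by \cref{square} and the link condition; $g$ fixes $\tilde v$ and sends $\tilde e_i$ to the unique $H_{i+1}$-edge at $\tilde v$, namely $\tilde e_{i+1}$, so $g$ stabilizes $\hat C$ and acts on it by a nontrivial cyclic permutation of coordinates. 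Hence $g$ is a cube inversion of $Z$, a contradiction; this proves (1).

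For (2), assume $G$ acts freely on $Z$. By the reformulation $Stab_G(w)=Stab_G(z)=\{1\}$ for any vertex $z$ of $Z$ over a vertex $w$ of $Y$, and every vertex of $Y$ arises this way, so $G$ fixes no vertex of $Y$ nontrivially; if $g\neq 1$ fixed a point $p\in Y$, it would stabilize the minimal cube containing $p$, hence fix it pointwise by (1), hence fix a vertex of $Y$ — impossible. For (3), assume again $G$ acts freely on $Z$; by (2) it acts freely on $Y$, so $Z\to Z/G$ and $Y\to Y/G$ are universal coverings (both $Z$ and $Y$ are contractible, being $\cat$), and $c$ descends to $\bar c\colon Z/G\to Y/G$. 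Equivariance of $c$ makes $\bar c_*$ the identity on $\pi_1$ under the identifications with $G$ by deck transformations, so $\bar c$ is a $\pi_1$-isomorphism of aspherical CW-complexes, hence a homotopy equivalence by Whitehead's theorem. Finally, for a vertex $\bar w$ of $Y/G$ with lift $w\in Y$, we have $\bar c^{-1}(\bar w)=p_Z(c^{-1}(w))$; the $G$-translates $g\cdot c^{-1}(w)=c^{-1}(gw)$ are pairwise disjoint (distinct vertices of $Y$ have disjoint fibres and $Stab_G(w)=\{1\}$) and each is convex and connected (\cref{collapse1}), so $p_Z$ restricts to an isomorphism $c^{-1}(w)\xrightarrow{\ \sim\ }\bar c^{-1}(\bar w)$ and the preimage of $\bar c^{-1}(\bar w)$ in the universal cover $Z$ is the disjoint union of these convex connected subcomplexes; therefore $\bar c^{-1}(\bar w)$ is locally convex in $Z/G$ and, being isomorphic to a convex subcomplex of $Z$, is $\cat$.

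The main obstacle is (1): transporting a cube inversion of $Y$ to one of $Z$. The two delicate points are producing a vertex of $Z$ fixed by $g$ and sitting over a $g$-fixed corner of $C$ — handled by the finite-order reduction and the Bruhat–Tits fixed-point theorem — and the "moving within the fibre" step, which is precisely where weak collapsibility (rather than mere $G$-invariance of $\H$) is essential, since it is what lets fixed vertices and carriers be lifted coherently.
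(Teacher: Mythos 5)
Your proof is correct and follows essentially the same strategy as the paper's: weak collapsibility transfers stabilizers of vertices of $Y$ to their fibres, the fixed-point theorem for finite-orbit actions without inversions produces a $g$-fixed vertex of $Z$ over a $g$-fixed corner of $C$, and a cube of $Z$ dual to the same hyperplanes as $C$ is exhibited at that vertex, forcing $g$ to fix it pointwise rather than invert it. The only organizational difference is that the paper locates this cube by intersecting the carriers of the relevant hyperplanes inside $c^{-1}(C)$ via the Helly property and Corollary~\ref{product}, whereas you reach it through a fibre-moving argument for a single carrier combined with $g$-equivariance (where the extremal edge of your minimizing geodesic dual to a hyperplane of $\H$ may be the last rather than the first, so one moves $\tilde v'$ instead of $\tilde v$); parts (2) and (3) match the paper's terser treatment.
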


\begin{proof}
    $(1)$ Assume $g\in G$ stabilizes a cube $C$ of $Y$. Let $\mathcal{C}$ be the finite collection of pairwise transverse hyperplanes corresponding to $C$, so that $g\mathcal{C} = \mathcal{C}$. Some power $g^n$ fixes $C$ pointwise. Since $C$ is convex, it corresponds to an intersection of halfspaces of hyperplanes of $\W\setminus \H$. Then the preimage $D = c^{-1}(C)$ is convex in $Z$, and non-empty by $(1)$. Moreover, since $c$ is equivariant, $gD = D$. The carriers in $Z$ of all hyperplanes of $\mathcal{C}$ intersect $D$, thus by the Helly property they intersect globally in $D$. Let $A$ denote the intersection of all such carriers and $D$. By Corollary~\ref{product}, $A$ decomposes as a product $A_1\times A_2$ where $A_1$ is a cube with set of dual hyperplanes equal to $\mathcal{C}$ and $A_2$ is a connected complex whose dual hyperplanes are all in $\H$. Since $g\mathcal{C} = \mathcal{C}$, $gA = A$. For every vertex $z$ of $A$, $g^n\in Stab(c(z)) = Stab(z)$ by the weak collapse assumption. Thus $g$ acts on $A$ with finite orbits. Therefore, $g$ stabilizes a cube of $A$ (see \cite{bridsonhaefliger}, II, Corollary~2.8 for a $\cat$ proof, or \cite{genevoisbook}, Theorem~4.1.1 for a combinatorial proof). Since $g$ is not a cube inversion of $Z$, $g$ fixes some vertex $z\in A$. Let $C'$ be the copy of $A_1$ at $z$, i.e.~the cube spanned at $z$ by edges dual to all hyperplanes of $\mathcal{C}$. Since $g\mathcal{C} = \mathcal{C}$, $gC' = C'$, and $g$ is not a cube inversion, hence $g$ fixes $C'$ pointwise. By construction, $c(C') = c(A_1) = C$, and $c$ is equivariant, hence $g$ fixes $C$ pointwise.

    $(2)$ This is clear since both actions are without cube inversions and have the same vertex stabilizers by the weak collapse assumption.

    $(3)$ When both actions are free, $c$ induces a homotopy equivalence $Z/G\to Y/G$ and vertex preimages of $Z\to Y$ embed in $Z/G$ via the quotient map, proving the result.
\end{proof}

We can now prove the characterization.
\begin{lm}
\label{strongdownstairs}
    Let $G$ act freely on a $\cat$ cube complex $Z$ by combinatorial isometries, and let $c$ be any (equivariant) collapse of $Z$. Then $c$ is a strong collapse if and only if $c$ induces a homotopy equivalence in the quotient, and for every vertex preimage $C$ in $Z/G$, every two edges of $C$ that are parallel in $Z/G$ are parallel in $C$.
\end{lm}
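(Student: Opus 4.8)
The plan is to prove the two implications separately, and in both directions to first dispose of the homotopy-equivalence clause: by the remark following Definition~\ref{collapses} (via Lemma~\ref{collapse}), for a free action ``$c$ induces a homotopy equivalence $Z/G\to Y/G$'' is equivalent to ``$\H$ is weakly collapsible'', where $\H$ is the $G$-invariant family collapsed by $c$. So in both directions I may assume $\H$ is weakly collapsible, and use that each vertex preimage of $c$ embeds in $Z/G$ as a convex subcomplex all of whose edges are dual to $\H$ (Lemmas~\ref{collapse1} and \ref{collapse}). Two elementary facts will be used repeatedly: a free action has no cube inversions (a cube-stabiliser fixes that cube's centroid), so Lemmas~\ref{strongimpliesweak} and \ref{collapse} apply; and if $a,b$ are vertices of $Z$ with $c(a)=c(b)$ then a geodesic edge path from $a$ to $b$ crosses only hyperplanes of $\H$ (Lemmas~\ref{edgepath} and \ref{collapse1}), i.e.\ $\s(a\mid b)\subseteq\H$.

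For the easier direction (a strong collapse satisfies the condition), I would argue by contraposition: if $c$ is not a strong collapse, pick $H\in\H$ and $g\in G\setminus Stab(H)$ with $\s(H\mid gH)\subseteq\H$. A geodesic realising the distance between the carriers of $H$ and $gH$ crosses exactly $\s(H\mid gH)\subseteq\H$, so its endpoints $x$ (in the carrier of $H$) and $y$ (in the carrier of $gH$) have the same image $v=c(x)=c(y)$. Choosing dual edges $e$ at $x$ (dual to $H$) and $f$ at $y$ (dual to $gH$), both $e$ and $f$ lie in $\tilde C:=c^{-1}(v)$ since $H,gH\in\H$. Now $g^{-1}f$ and $e$ are both dual to $H$, hence parallel in $Z$, so $p(e)$ and $p(f)=p(g^{-1}f)$ are parallel in $Z/G$ and both lie in the vertex preimage $p(\tilde C)$. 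The hypothesis makes them parallel inside $p(\tilde C)$, hence $e$ and $f$ are parallel in $\tilde C$, so dual to the same hyperplane of $Z$ --- forcing $gH=H$, a contradiction.

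For the converse I would start from a strong collapse, take edges $e,f$ of a vertex preimage $\bar C=p(c^{-1}(v))$ that are parallel in $Z/G$, lift them to edges $\tilde e,\tilde f$ of $\tilde C:=c^{-1}(v)$, and lift a chain of squares witnessing the parallelism to $Z$ starting from $\tilde e$ (using that $Z\to Z/G$ is a covering near each edge). This produces a parallelism chain in $Z$ from $\tilde e$ to some lift $\tilde f'=g\tilde f$ of $f$, all of whose edges are dual to one hyperplane $H$, which lies in $\H$ because $\tilde e\subseteq\tilde C$. The key step is to show $g\in Stab(H)$: otherwise $H':=g^{-1}H\in\H$ is distinct from $H$ and $\tilde f$ is dual to $H'$, so the strong collapse condition (applied to $H$ and $g^{-1}\notin Stab(H)$) gives a hyperplane $D\in\s(H\mid H')\setminus\H$; but $D$ separates the carrier of $H$ from that of $H'$, hence separates every vertex of $\tilde e$ from every vertex of $\tilde f$, all of which lie in $c^{-1}(v)$ --- so $D\in\H$, a contradiction. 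Once $\tilde f$ is also dual to $H$, both edges lie in the carrier of $H$, identified with $H\times[0,1]$ with the $[0,1]$-factor a dual edge, say $\tilde e=\{x\}\times[0,1]$, $\tilde f=\{y\}\times[0,1]$. Their four endpoints lie in the convex set $\tilde C$ and in the convex carrier, so $\tilde C$ contains the convex hull of these points, which equals $I(x,y)\times[0,1]$ where $I(x,y)\subseteq H$ is the (convex) union of geodesics from $x$ to $y$; an edge path $x=p_0,\dots,p_k=y$ in $I(x,y)$ then yields squares $\{p_{i-1},p_i\}\times[0,1]$ inside $\tilde C$ exhibiting $\tilde e$ and $\tilde f$ as parallel in $\tilde C$.

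I expect the main obstacle to be the converse direction, specifically coordinating the covering-space lift of the parallelism chain with the strong collapse condition: one must keep track of which $G$-translate of $H$ the lifted edge $\tilde f'$ is dual to, and the argument that $g\in Stab(H)$ hinges on the interplay between separators of hyperplanes (carrier versus carrier) and the fact that any two vertices of a common vertex preimage are separated only by hyperplanes of $\H$. The closing ``parallel within the vertex preimage'' step is a routine convexity argument in the product carrier, but is worth writing out carefully since it is precisely the content of the lemma's second clause.
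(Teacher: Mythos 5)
Your proof is correct; note only that your direction labels are swapped (your ``easier direction'' in fact proves that the parallelism condition implies strong collapsibility, and your ``converse'' proves the reverse implication). The implication ``condition implies strong'' is essentially the paper's argument: the paper likewise produces edges $e,f$ dual to $H$ and $gH$ lying in a common vertex preimage and transfers the parallelism hypothesis back upstairs; where you invoke the embedding $c^{-1}(v)\hookrightarrow Z/G$ from Lemma~\ref{collapse} to identify the terminal edge of the lifted square chain with $f$ itself, the paper shows directly that this terminal edge $g'f$ satisfies $\s(g'f\mid f)\subseteq\H$ and kills $g'$ using weak collapsibility and freeness --- the same content in different packaging. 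In the implication ``strong implies condition'' you genuinely diverge. The paper stays in the quotient: it joins endpoints of $e$ and $f$ by a geodesic $p$ inside the vertex preimage $C$, splits into cases according to whether $p$ crosses $\overline{H}$, inducts on the number of crossings, and applies Corollary~\ref{product} to span a product of $\uc{p}$ with edges dual to $H$. You instead lift the ambient parallelism chain through all of $Z/G$, identify the resulting deck transformation $g$, force $g\in Stab(H)$ by applying the strong collapse condition to $H$ and $g^{-1}H$ together with the observation that any hyperplane separating two vertices of the convex set $c^{-1}(v)$ lies in $\H$, and then produce the required parallelism inside $c^{-1}(v)$ via the convex hull $I(x,y)\times[0,1]$ in the product carrier. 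Your route avoids the paper's case analysis on crossings of $\overline{H}$ at the price of using the product structure of carriers and the behaviour of convex hulls in products; both are standard and available from Corollary~\ref{product} and convexity of carriers, so the argument is complete.
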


\begin{proof}
    Let $\H$ be the family of hyperplanes collapsed by $c$. Assume first that $\H$ is strongly collapsible. Then $\H$ is weakly collapsible by Lemma~\ref{strongimpliesweak} and $c$ induces a homotopy equivalence by Lemma~\ref{collapse}. Choose $C\subseteq Z/G$ a vertex preimage, and $e,f$ two edges of $C$ with the same dual hyperplane $\overline{H}$ in $Z/G$. By Lemma~\ref{collapse1} , $C$ is $\cat$. Choose $p$ a shortest edge path in $C$ from any endpoints of $e$ to any endpoint of $f$. In particular, $p$ does not contain $e$ or $f$.
    
    Assume first that $p$ has no edge dual to $\overline{H}$. Lift $p$ to a path $\uc{p}$ in $Z$ between lifts $\uc{e}, \uc{f}$ of $e$ and $f$. By assumption, $\uc{p}$ is only dual to hyperplanes of $\H$. Moreover, the dual hyperplanes to $\uc{e}$ and $\uc{f}$ are in $\H$ as well and both project to $\overline{H}$ in the quotient. Therefore, these hyperplanes are of the form $H,gH$ for some $g\in G$. Every hyperplane in $\s(H\mid gH)$ is dual to an edge of $\uc{p}$, hence lies in $\H$. By the strong collapse assumption, $gH=H$. Since $p$ is geodesic, so is $\uc{p}$, hence $\uc{p}$ remains in the carrier of $H$ by convexity. Moreover, since $\uc{p}$ has no edge dual to $H$, $\uc{p}$ remains in one halfspace of $H$. By Corollary~\ref{product}, $\uc{p}$ spans a product with edges dual to $H$, the first and last of those edges being $\uc{e}$ and $\uc{f}$ respectively. In the quotient, $p$ spans a product with edges dual to $H$, the first and last of those edges being $e$ and $f$ respectively, proving that $e$ and $f$ are parallel in $C$.

    Now assume $p$ has edges $e_1,\dots e_n$ dual to $\overline{H}$, appearing in $p$ in that order. Set $e_0 = e$ and $e_{n+1} = f$. Then for each $i$, the edges $e_i$ and $e_{i+1}$ are parallel in $C$ by the previous argument. This proves that $e$ and $f$ are parallel in $C$.

    For the converse, assume that $c$ induces a homotopy equivalence and that every pair of edges in a vertex preimage that are parallel in $Z/G$ are parallel in that vertex preimage. Let $H\in \H$, $g\in G$, with $\s(H\mid gH)\subseteq \H$. Let $e,f$ be edges dual to $H,gH$ respectively such that $\s(e\mid f)\subseteq \H$. The collapse $c$ collapses $e$ and $f$ to the same vertex. Therefore, their images $\overline{e},\overline{f}$ in $Z/G$ belong to the same vertex preimage $C$. Since $H$ and $gH$ have the same quotient hyperplane $\overline{H}$ in $Z/G$, $\overline{e}$ and $\overline{f}$ are parallel in $Z/G$. By assumption, $\overline{e}$ and $\overline{f}$ are parallel in $C$, therefore there exists a sequence of squares in $C$ witnessing that parallelism, having opposite edges $\overline{e}_i$ and $\overline{e}_{i+1}$, with $\overline{e}_0 = \overline{e}$ and $\overline{e}_n = \overline{f}$. This sequence of squares, seen as a map from $[0,1]\times [0,n]$ to $Z/G$, lifts to a sequence of squares in $Z$ starting with the edge $e$. The terminal edge is of the form $g'f$, with $g'\in G$ and parallel to $e$, thus dual to $H$. Moreover, all the squares lift squares of $C$, meaning that $\s(e\mid g'f)\subseteq \H$. Therefore, $\s(g'f\mid f)\subseteq \H$ and for any endpoint $z$ of $f$, $c(z) = c(g'z)$. Since $c$ induces a homotopy equivalence and $G$ acts freely, this implies that $g'z=z$ and $g'$ is the identity. Thus $g'f = f$ is dual to both $H$ and $gH$: $g$ stabilizes $H$.
\end{proof}

We quote the following lemma, a straightforward consequence of the definitions.

\begin{lm}[Partial strong collapses are strong]
\label{subfamilystrong}
    Let $\H$ be a family of hyperplanes of a $\cat$ cube complex $Z$ with a partition $\H=\H_1\sqcup \H_2$ into $G$-invariant hyperplane families. Then $\H_2$ identifies to a $G$-invariant family of hyperplanes in the range $Y$ of the collapse of $\H_1$. The collapse of $\H$ in $Z$ factors as the composition of the collapse of $\H_1$ in $Z$ followed by the collapse of $\H_2$ in $Y$. If $\H$ is strongly (resp. weakly) collapsible in $Z$, then $\H_1$ is strongly (resp. weakly) collapsible in $Z$ and the image of $\H_2$ is strongly (resp. weakly) collapsible in $Y$.
\end{lm}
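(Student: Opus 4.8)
The plan is to treat the collapse as what it is by \cref{defcollapse}, a restriction quotient, and to read everything off Sageev's construction. Write $c_1\colon Z\to Y$ for the collapse of $\H_1$, so that $Y=X(Z^{(0)},\W\setminus\H_1)$ and $\W\setminus\H_1$ is in canonical bijection with the set of hyperplanes of $Y$; this bijection identifies $\H_2$ with a family $\overline{\H_2}$ of hyperplanes of $Y$, which is $G$-invariant because $c_1$ is $G$-equivariant by \cref{collapse1}(5). The first two assertions are then formal: no two hyperplanes of $\H_2$ are merged since the identification is a bijection, and since restriction quotients compose --- restricting $\W$ to $\W\setminus\H_1$ and then to $(\W\setminus\H_1)\setminus\H_2=\W\setminus\H$ agrees with restricting $\W$ directly to $\W\setminus\H$ --- the collapse of $\H$ factors as $c_1$ followed by the collapse of $\overline{\H_2}$ in $Y$. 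I would also dispose at once of the statements about $\H_1$: since $\H_1\subseteq\H$, a separator (of two hyperplanes, or of two vertices) not contained in $\H$ is a fortiori not contained in the smaller family $\H_1$, so strong (resp. weak) collapsibility of $\H$ immediately gives the same for $\H_1$; and by \cref{strongimpliesweak} together with \cref{collapse}(1), the action of $G$ on $Y$ has no cube inversions in either case, so it is meaningful to ask whether $\overline{\H_2}$ is collapsible in $Y$.

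The substantive step is the collapsibility of $\overline{\H_2}$ in $Y$, and for this I would first establish a transport lemma: halfspaces and carriers pass correctly through $c_1$. On vertices $c_1$ merely forgets the $\H_1$-coordinates of a compatible halfspace choice, so for every $K\in\W\setminus\H_1$ with image $\overline K$ in $Y$, a vertex $z$ of $Z$ and its image $c_1(z)$ lie in corresponding halfspaces of $K$ and of $\overline K$; hence $c_1(K^{\pm})\subseteq\overline K^{\pm}$ for a matching labelling of halfspaces. Moreover, since $c_1$ is surjective and carries cubes onto cubes by coordinate projections (\cref{collapse1}(1) and (3)), every cube of $Y$ having $\overline K$ as a dual hyperplane is the image of a cube of $Z$ having $K$ as a dual hyperplane, so $\mathrm{carrier}_Y(\overline K)\subseteq c_1(\mathrm{carrier}_Z(K))$. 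This is exactly the place where I expect to need genuine care --- reconciling the combinatorics of the restriction quotient $Y$ with that of the original wallspace $Z$ --- but it is elementary once one unwinds the definition of Sageev's complex.

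Granting the transport lemma, the two collapsibility statements for $\overline{\H_2}$ in $Y$ follow by lifting a non-separating witness from $Z$. For the weak case, given a vertex $\bar z$ of $Y$ and $g\in G$ with $g\bar z\neq\bar z$, I lift $\bar z$ to a vertex $z$ of $Z$; then $c_1(gz)=g\bar z\neq\bar z=c_1(z)$ forces $gz\neq z$, so weak collapsibility of $\H$ provides $K\in\s(z\mid gz)$ with $K\notin\H$, hence $K\notin\H_1$ and $\overline K\notin\overline{\H_2}$; since $c_1$ preserves $K$-coordinates, $\bar z$ and $g\bar z$ lie in opposite halfspaces of $\overline K$, so $\overline K\in\s(\bar z\mid g\bar z)\setminus\overline{\H_2}$. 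For the strong case, given $\overline H\in\overline{\H_2}$ with unique preimage $H\in\H_2$ and $g\in G$ with $g\overline H\neq\overline H$ --- equivalently $gH\neq H$ --- strong collapsibility of $\H$ provides $K\in\s(H\mid gH)$ with $K\notin\H$, hence once more $K\notin\H_1$ and $\overline K\notin\overline{\H_2}$; choosing halfspace labels so that $\mathrm{carrier}_Z(H)\subseteq K^{+}$ and $\mathrm{carrier}_Z(gH)\subseteq K^{-}$, the transport lemma yields $\mathrm{carrier}_Y(\overline H)\subseteq c_1(K^{+})\subseteq\overline K^{+}$ and $\mathrm{carrier}_Y(g\overline H)=\mathrm{carrier}_Y(\overline{gH})\subseteq\overline K^{-}$, so $\overline K\in\s(\overline H\mid g\overline H)\setminus\overline{\H_2}$. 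In both cases the relevant separator contains a hyperplane outside $\overline{\H_2}$, which is precisely what weak (resp. strong) collapsibility of $\overline{\H_2}$ in $Y$ requires.
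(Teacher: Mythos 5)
Your proposal is correct. The paper states this lemma without proof, calling it ``a straightforward consequence of the definitions,'' and your argument is exactly the intended unwinding: the identification of $\H_2$ with hyperplanes of $Y$ and the factorization follow formally from Sageev's construction and the composability of restriction quotients, the claims about $\H_1$ follow because a separator not contained in $\H$ is not contained in the subfamily $\H_1$, and the transport of halfspaces, carriers, and hence separators through $c_1$ (using the surjectivity and cube-lifting statements of Lemma~\ref{collapse1}) lets you push the non-collapsed witness hyperplane $K$ down to $Y$. Your remark that Lemmas~\ref{strongimpliesweak} and~\ref{collapse}(1) are needed so that the action on $Y$ has no cube inversions, making the collapsibility of $\overline{\H_2}$ in $Y$ well posed, is a detail the paper glosses over and is worth having made explicit.
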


\begin{lm}[Strong collapses are quasi-isometries]
\label{quasiisometry}
    Let $G$ act on a $\cat$ cube complex $Z$. Let $c\colon Z\to Y$ be the strong collapse of a family $\H$ containing a finite number $n$ of hyperplane orbits. For $z_1,z_2$ vertices of $Z$, the following holds:
    \[ \frac{d_Z(z_1,z_2)-n}{n+1}\leq d_Y(c(z_1),c(z_2))\leq d_Z(z_1,z_2)\]
    In particular, vertex preimages of $c$ have diameter at most $n$.
\end{lm}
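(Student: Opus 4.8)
The plan is to pass to separator cardinalities and let a diameter bound for vertex preimages do the work. Since the hyperplanes of $Y$ identify with $\W\setminus\H$ and a hyperplane $K\in\W\setminus\H$ separates $c(z_1)$ from $c(z_2)$ exactly when it separates $z_1$ from $z_2$, Lemma~\ref{edgepath} gives
\[ d_Y(c(z_1),c(z_2)) = \card{\s(z_1\mid z_2)\setminus\H} \le \card{\s(z_1\mid z_2)} = d_Z(z_1,z_2),\]
which is already the upper inequality. The lower inequality and the final sentence will both follow from the claim that $c(x)=c(y)$ implies $d_Z(x,y)\le n$.

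To prove this claim, suppose $c(x)=c(y)$ but $d_Z(x,y)>n$, and fix a geodesic edge path $p$ from $x$ to $y$. By Lemma~\ref{edgepath} it crosses $d_Z(x,y)>n$ pairwise distinct hyperplanes, all lying in $\H$ because no hyperplane outside $\H$ separates $x$ from $y$. As $\H$ meets only $n$ orbits, two of these hyperplanes, say $H$ and $H'$, lie in the same $G$-orbit, so $H'=gH$ for some $g\in G\setminus Stab(H)$; assume $H$ is crossed before $H'$ along $p$. Let $u$ be the endpoint, on the $y$-side, of the (unique) edge of $p$ dual to $H$, and $v$ the endpoint, on the $x$-side, of the edge of $p$ dual to $H'$. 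The subpath of $p$ from $u$ to $v$ is geodesic, hence every hyperplane of $\s(u\mid v)$ is crossed by $p$ and so lies in $\H$. Since $u$ belongs to the carrier of $H$ and $v$ to the carrier of $H'$, every hyperplane of $\s(H\mid H')$ separates $u$ from $v$; thus $\s(H\mid gH)\subseteq\s(u\mid v)\subseteq\H$, contradicting the strong collapse condition of Definition~\ref{collapses}. In particular vertex preimages of $c$ have diameter at most $n$, which is the final assertion.

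For the lower inequality, fix a geodesic edge path $p$ from $z_1$ to $z_2$, of length $d:=d_Z(z_1,z_2)$. Exactly $d_Y:=d_Y(c(z_1),c(z_2))$ of its edges are dual to hyperplanes outside $\H$ --- namely the edges crossing $\s(z_1\mid z_2)\setminus\H$ --- and deleting these cuts $p$ into $d_Y+1$ (possibly trivial) geodesic subpaths whose edges are all dual to $\H$. The endpoints $u,v$ of such a subpath satisfy $\s(u\mid v)\subseteq\H$, hence $c(u)=c(v)$, so the subpath has length at most $n$ by the claim. Summing lengths gives $d-d_Y\le(d_Y+1)n$, i.e.\ $d-n\le(n+1)d_Y$, which rearranges to the desired bound.

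I expect the only delicate step to be the inclusion $\s(H\mid gH)\subseteq\s(u\mid v)$ in the second paragraph: one has to pick the endpoints $u,v$ of the two crossing edges on the correct sides so that the subpath between them stays geodesic while $u$ and $v$ still lie in the carriers of $H$ and $gH$. Once that is arranged, the rest is a pigeonhole and elementary arithmetic.
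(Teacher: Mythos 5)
Your proof is correct and follows essentially the same route as the paper: counting the edges of a geodesic dual to hyperplanes outside $\H$ for the upper bound, and a pigeonhole on runs of consecutive $\H$-edges (two such edges in the same orbit would give $H$, $gH$ with $\s(H\mid gH)\subseteq\H$, violating strong collapsibility) for the lower bound. Your only reorganization is to prove the diameter bound on vertex preimages first and deduce the run-length bound from it, and your verification of the inclusion $\s(H\mid gH)\subseteq\s(u\mid v)\subseteq\H$ correctly fills in the step the paper leaves implicit.
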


Note that the lemma makes no cocompactness assumption. This fails for a weak collapse: $\ZZ = \gen{(1,1)}$ acts (non-cocompactly) on the standard cubulation of $\RR^2$ by translations with two hyperplane orbits. Yet, the weak collapse all horizontal hyperplanes is a linear map of rank $1$, hence not a quasi-isometry.

\begin{proof}
    Let $z_1,z_2$ be vertices of $Z$. Let $p$ be a geodesic edge path joining $z_1$ and $z_2$. Exactly $d_Y(c(z_1),c(z_2))$ edges of $p$ are dual to hyperplanes outside $\H$, proving the second inequality. Moreover, $p$ cannot contain $n+1$ consecutive edges dual to hyperplanes of $\H$, otherwise two of those edges would be dual to distinct hyperplanes of $\H$ in the same orbit and yet separated only by hyperplanes of $\H$, a contradiction with the strong collapse assumption. Therefore, $p$ has at most $n(d_Y(c(z_1),c(z_2))+1)+d_Y(c(z_1),c(z_2))$ edges, proving the first inequality.
\end{proof}

The following result will be instrumental to the proof of Proposition~\ref{spatialblowup}.

\begin{lm}
\label{tube}
Let $G$ act freely and cocompactly on a $\cat$ cube complex $Z$ by combinatorial isometries without hyperplane inversions. Let $\uc{c}\colon Z\to Y$ be a strong collapse and $c\colon Z/G\to Y/G$ the induced homotopy equivalence between quotients. Assume that $Y/G$ is combinatorially isomorphic to the Salvetti $\SS$ (i.e.~$G\simeq A_\Gamma$ acts on $Y$ cospecially and transitively on vertices).

Let $C\subseteq Z/G$ be the unique vertex preimage. Let $e$ be an edge of $(Z/G)\setminus C$ with dual hyperplane $H_e$, and consider $p$ a geodesic edge path in $C$ joining the endpoints of $e$. The following hold:
\begin{itemize}
    \item Every edge $f$ spanning a square with $e$ spans a product with the cycle $ep^{-1}$, in the sense that the square complex $f\times ep^{-1}$ immerses in $X$, with $f\times e$ sent to the square and $\{x\}\times ep^{-1}$ sent to $ep^{-1}$ for $x$ the appropriate endpoint of $f$. 
    \item Every hyperplane transverse to $H_e$ is transverse to all the hyperplanes dual to edges of $p$.
\end{itemize}
\end{lm}

\begin{proof}
    Lift the cycle $ep^{-1}$ to a path $\uc{e}\uc{p}^{-1}$ in $Z$ with endpoints $x,gx$ for some $g\in G$. Let $\K\subseteq \H$ be the set of hyperplanes dual to edges of $\uc{p}$, and let $H$ be the hyperplane dual to $\uc{e}$, lifting $H_e$. Let $f$ be any edge spanning a square with $e$ and $\uc{f}$ the lifted edge at $x$ spanning a square with $\uc{e}$, with dual hyperplane $H'$. Write $x'$ the other endpoint of $\uc{f}$. Two cases arise:
    \begin{itemize}
        \item If $H'\in \H$, $\s(H'\mid gH')\subseteq \s(x\mid gx) = \K\cup \{H\}$, and $H\notin \s(H'\mid gH')$ since $H$ and $H'$ are transverse. Therefore, $\s(H'\mid gH')\subseteq \K\subseteq\H$ and $g\in Stab(H')$ by the strong collapse assumption.
        \item If $H'\notin \H$, let $y,y'$ be the other vertices of the square spanned by $\uc{e}$ and $\uc{f}$, so that $\s(x\mid y) = \s(x'\mid y') = \{H\}$. Then $\s(y\mid gx) = \K\subseteq \H$, meaning that $\uc{c}(y) = \uc{c}(gx)$. The two hyperplanes $\uc{c}(H')$ and $\uc{c}(gH')$ are both incident to the same vertex $\uc{c}(y) = \uc{c}(x)$ with the same orientation. Since $G$ acts cospecially on $Y$, the two hyperplanes must be equal, hence $g\in Stab(H')$. 
    \end{itemize}
    In both cases, since $g$ stabilizes $H'$, the carrier of $H' = gH'$ contains both endpoints of $\uc{p}$. Since $g$ does not invert $H'$ those endpoints are in the same halfspace of $H'$. By convexity, the entire path $\uc{e}\uc{p}^{-1}$ is contained in the carrier of $H'$, in the same halfspace of $H'$. Thus, the path $\uc{e}\uc{p}^{-1}$ spans a product with the edge $\uc{f}$, by Corollary~\ref{product}. Note that the edge of that product dual to $H'$ at $gx$ must be $g\uc{f}$ by the same lemma. Therefore, this product descends to an immersed product of $f$ and the cycle $ep^{-1}$ in $Z/G$, proving the first statement.

    Let $H_{f'}$ be a hyperplane transverse to $H_e$, dual to some edge $f'$ of the carrier of $H_e$. There exists a path $f_1\dots f_n$ in the carrier of $H_e$ such that each edge $f_i$ spans a square with an edge dual to $H_e$, $f_1$ spans a square with $e$ and $f_n = f'$. The second statement is proved by induction on $n$: if $n=1$, it is a direct consequence of the first statement. Otherwise, by the first statement, $f_1$ spans a product with the cycle $ep^{-1}$. Consider then the cycle $e'p'^{-1}$ opposite $ep^{-1}$ in that product. The edge $e'$ is dual to $H_e$, not in $C$, and $p'$ is dual to the same hyperplanes as $p$, in particular $p'$ is a geodesic in $C$. The induction then proceeds using $e'$ and $p'$ to apply the first statement.
\end{proof}

\section{Collapses between blow-ups}

In this section we prove that the collapse of a tree-like families of hyperplanes in a blow-up $\SS^\mathbf{\Pi}$ is an example of strong collapse (see Definition~\ref{defspine}). By Lemma~\ref{subfamilystrong}, the same holds for any sub-family of a tree-like family. Conversely, any collapse whose domain and range are both blow-ups of the Salvetti $\SS$ is the collapse of a sub-family of a tree-like family. In particular, collapses between blow-ups of $\SS$ are strong.

\begin{lm}[Tree-like collapses are strong collapses]
\label{blowupstrong}
    Let $\mathbf{\Pi}$ be a collection of pairwise compatible Whitehead partitions and $\overline{\H}$ a tree-like family of hyperplanes in the blow-up $\SS^\mathbf{\Pi}$. Let $\H$ be the set of all lifts of hyperplanes of $\overline{\H}$ to the universal cover $\uc{\SS^\mathbf{\Pi}}$ endowed with the free cocompact action of $G=\pi_1\SS^\mathbf{\Pi}\simeq A_\Gamma$. Then $\H$ is strongly collapsible, with strong collapse map lifting the tree-like collapse $\SS^\mathbf{\Pi}\to\SS$ of $\overline{\H}$.
\end{lm}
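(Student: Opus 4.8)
The plan is to deduce the strong collapse property from its characterization in the quotient (Lemma~\ref{strongdownstairs}), after identifying the quotient of the collapse of $\H$ with the tree-like collapse $c_{\overline\H}\colon\SS^\mathbf{\Pi}\to\SS$.

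First I would observe that $\H$, being the full preimage of the hyperplane set $\overline\H$, is $G$-invariant, so by Lemma~\ref{collapse1}(5) its collapse $\uc c\colon Z\to Y$ is $G$-equivariant; moreover a hyperplane of $Z$ is collapsed by $\uc c$ exactly when it lies over a hyperplane of $\overline\H$, so the induced map on quotients $Z/G=\SS^\mathbf{\Pi}\to Y/G$ is the restriction quotient collapsing precisely $\overline\H$, i.e.\ $c_{\overline\H}$. By Theorem~\ref{treelikecompatible}(1) this map is a homotopy equivalence with range combinatorially isomorphic to $\SS$; in particular the last sentence of the statement (that the strong collapse lifts $c_{\overline\H}$) is immediate once we know $\uc c$ is strong. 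Since $G\cong A_\Gamma$ acts freely on $Z=\uc{\SS^\mathbf{\Pi}}$, Lemma~\ref{strongdownstairs} reduces the problem to the following: for the unique vertex preimage $C\subseteq\SS^\mathbf{\Pi}$ of $c_{\overline\H}$ (unique because $\SS$ has a single vertex), any two edges of $C$ that are parallel in $\SS^\mathbf{\Pi}$ must already be parallel in $C$; equivalently, for each $\overline H\in\overline\H$ the edges of $C$ dual to $\overline H$ form a single parallelism class of $C$.

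To establish this last claim I would fix $\overline H\in\overline\H$, set $L=lk(\overline H)$, and take two edges $e,f$ of $C$ dual to $\overline H$; since $\overline\H$ consists of carrier retracts, writing the carrier of $\overline H$ as $[0,1]\times Y_{\overline H}$ it suffices to join the corresponding vertices of $Y_{\overline H}$ by an edge path of $Y_{\overline H}$ dual only to $\overline\H$, as the resulting product squares then lie in $C$. This is where Theorem~\ref{treelikecompatible}(2) is used in an essential way: choosing $\mathbf{\Pi}'\subseteq\mathbf{\Pi}$ a maximal set of partitions with link $L$, the $\overline\H$-edges of link $L$ form a maximal subtree of the graph $\Theta$ of all link-$L$ labelled edges, and together with the dictionary between cubes and cliques of labels (Lemma~\ref{uniquecube}) this pins down how edges dual to $\overline H$ sit inside the vertex blow-up: a hyperplane $\overline K$ transverse to $\overline H$ has a label adjacent to the basepoint of $\overline H$, which forces it into a controlled position relative to the link blocks, and the spanning-tree hypothesis then prevents the parallelism class of $e$ in $C$ from being strictly smaller than that of $\overline H$ in $\SS^\mathbf{\Pi}$.

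The hard part will be exactly this last structural argument — carrying out the bookkeeping with the Charney–Stambaugh–Vogtmann blow-up combinatorics, in particular keeping track of how the spanning trees associated with the various links interact through the product decompositions of carriers. None of the soft manipulations above can shortcut it, because by Lemma~\ref{strongdownstairs} the conclusion we need is logically equivalent to the strong collapse property itself, so the specific combinatorial structure of tree-like families is genuinely required and that is where I expect the bulk of the work to lie. (An alternative to the quotient characterization would be to verify condition~(2) of Definition~\ref{collapses} directly, showing that the image in $Y=\uc\SS$ of the carrier of a lift of $\overline H$ is disjoint from its distinct $G$-translates, but this runs into the same structural core.)
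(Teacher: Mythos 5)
Your reduction is sound: identifying the quotient of the collapse of $\H$ with the tree-like collapse $c_{\overline{\H}}\colon \SS^{\mathbf{\Pi}}\to\SS$, invoking Theorem~\ref{treelikecompatible}(1) for the homotopy equivalence, and then applying Lemma~\ref{strongdownstairs} to reduce the lemma to the statement that any two edges of the unique vertex preimage $C$ dual to the same hyperplane $\overline{H}\in\overline{\H}$ are already parallel inside $C$ --- equivalently, that corresponding endpoints can be joined by a path in the carrier of $\overline{H}$ crossing only hyperplanes of $\overline{\H}$. This is a legitimate alternative framing: the paper instead verifies the strong collapse condition directly upstairs, taking $H\in\H$ and $g$ with $\s(H\mid gH)\subseteq\H$ and producing a path in the carrier of $\overline{H}$ dual only to $\overline{\H}$, then concluding via weak collapsibility and specialness. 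The two framings meet at exactly the same combinatorial core.

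But that core is precisely what your proposal does not supply. You write that the spanning-tree condition of Theorem~\ref{treelikecompatible}(2) ``prevents the parallelism class of $e$ in $C$ from being strictly smaller than that of $\overline{H}$'' and acknowledge that this is where the bulk of the work lies, so the essential content of the lemma is left as an expectation rather than an argument. What is actually needed (and what the paper's proof spends most of its length on) is a two-stage replacement procedure for a path $q$ in the carrier of $\overline{H}$: first, each edge labelled by a standard generator $v$ is replaced by a path of edges labelled by partitions splitting $v$, which stay in the carrier because such partitions have the label of $\overline{H}$ in their link; second, each partition-labelled edge $f$ not dual to $\overline{\H}$ is replaced, using the maximal-subtree condition in the sub-blow-up $\SS^{\mathbf{\Pi}'}$ of partitions sharing the link of the label of $f$, by a path whose edges are either dual to $\overline{\H}$ and transverse to $\overline{H}$, or labelled by partitions with strictly larger link; termination then follows because link sizes are bounded. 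None of this is routine --- in particular, proving that the replacement paths remain in the carrier of $\overline{H}$ and that the second replacement strictly enlarges links requires the specific blow-up combinatorics --- and without it the proposal only restates the problem (as you yourself observe, Lemma~\ref{strongdownstairs} makes your target statement logically equivalent to strong collapsibility). So the proposal is a correct setup with the proof of its main claim missing.
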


\begin{proof}
    By \cite{untwistedos}, Theorem~4.11, $\overline{\H}$ has compatible carriers. In particular, $\H$ is weakly collapsible, inducing the tree-like collapse $\SS^\mathbf{\Pi}\to \SS$ of $\overline{H}$ in the quotient.

    Let $H\in \H, g\in G$ and assume $\s(H\mid gH)\subseteq \H$. Let $x,y$ be vertices of the carrier of $H$, in the same halfspace of $H$, such that $\s(x\mid gy)\subseteq \H$. Let $p$ be a geodesic edge path joining $x$ and $y$, remaining in the carrier of $H$ by convexity, and let $q$ be its projection in the quotient $\SS^\mathbf{\Pi}$. The path $q$ remains in the carrier of the projected hyperplane $\overline{H}$ and never crosses $\overline{H}$. Let $l$ denote the label of $\overline{H}$ in $\SS^\mathbf{\Pi}$. By construction of $\SS^\mathbf{\Pi}$, for every edge $e$ of $q$ labelled by a standard generator $v\in V$, there exists an edge path $q_e$ with the same endpoints as $e$ such that every label of an edge of $q_e$ is a partition $\mathbf{P}$ with $v\in single(\mathbf{P})$. In particular, since $l\in lk(v)$, $l\in lk(\mathbf{P})$, meaning that the path $q_e$ remains in the carrier of $\overline{H}$. Let $q'$ be the path obtained from $q$ by replacing each such edge $e$ with the corresponding path $q_e$. This path $q'$ has the same endpoints as $q$, still lies in the carrier of $\overline{H}$, and all of its edges are labelled by partitions.

    We make the following claim: For every edge $f$ labelled by some partition $\mathbf{P}$ with $l\in lk(\mathbf{P})$, there exists a path $q'_f$ with the same endpoints as $f$ such that the following holds: Every edge of $q'_f$ is either dual to a hyperplane of $\overline{\H}$ transverse to $\overline{H}$, or is labelled by a partition $\mathbf{Q}$ with $l\in lk(\mathbf{P})\subsetneq lk(\mathbf{Q})$. In particular, $q'_f$ remains in the carrier of $\overline{H}$. Indeed, let $\mathbf{\Pi}'\subseteq \mathbf{\Pi}$ denote the set of partitions that have the same link as $\mathbf{P}$, and let $\overline{f}$ be the projection of $f$ via the collapse $\SS^\mathbf{\Pi}\to \SS^{\mathbf{\Pi}'}$. By definition of a tree-like family of hyperplanes, there exists a path $\gamma$ joining the endpoints of $\overline{f}$ in $\SS^{\mathbf{\Pi}'}$ such that every edge of $\gamma$ is dual to a hyperplane projected from $\overline{\H}$, whose label has the same link as $\mathbf{P}$. Define $q'_f$ as the shortest path in $\SS^\mathbf{\Pi}$ with the same endpoints as $f$ that projects to $\gamma$ in $\SS^\mathbf{\Pi'}$. Every edge of $q'_f$ is either dual to a hyperplane of $\overline{\H}$ transverse to $\overline{H}$, or labelled by a partition $\mathbf{Q}\in \mathbf{\Pi}\setminus\mathbf{\Pi}'$. Assume $\mathbf{Q}\in \mathbf{\Pi}\setminus\mathbf{\Pi}'$ appears as the label of an edge of $q'_f$. The cycle $q'_f f$ is a shortest cycle lifting $\gamma \overline f$ to $\SS^\mathbf{\Pi}$ and has an edge labelled $\mathbf{Q}$. By construction of $\SS^\mathbf{\Pi}$, some edge of $q'_f f$ is labelled by a vertex $v\in single(\mathbf{Q})$. In particular, $lk(v)\subseteq lk(\mathbf{Q})$. Since $v$ is not the label of any hyperplane of $\overline{\H}$ by definition, $v$ is the label of an edge of $\gamma \overline{f}$. In particular, $lk(v) = lk(\mathbf{P})$. This proves that $lk(\mathbf{P})\subseteq lk(\mathbf{Q})$ and equality cannot hold since $\mathbf{Q}\notin \mathbf{\Pi}'$. This proves the claim.
    
    Let $f$ be an edge of $q'$ which is not dual to a hyperplane of $\overline{\H}$. Choose such an $f$ whose label has a link of minimum size. Now use the claim to replace $f$ by the corresponding path $q'_f$ (remaining in the carrier of $\overline{H}$), and iterate this process inductively. The process terminates because at each step, either the number of edges labelled by partitions with link of minimal size decreases, or the minimal size increases (that size being bounded). The path $q''$ on which the process terminates has the same endpoints as $q'$ and $q$, is still contained in the carrier of $\overline{H}$ and crosses only hyperplanes of $\overline{\H}$. The cycle $q^{-1}q''$ lifts to a path $p^{-1}p''$ in the universal cover $\uc{\SS^\mathbf{\Pi}}$ starting from $y$, with other endpoint $g'y$ for some $g'\in G$.

    Finally, $\s(g'y\mid gy)\subseteq \s(g'y\mid x)\cup \s(x\mid gy)\subseteq \H$. By the weak collapse assumption, $gy = g'y$. This point, an endpoint of $p''$, belongs to the carrier of $H$. The oriented edges dual to $H$ containing $y$ and $gy$ project to parallel oriented edges incident to the same vertex of $\SS^\mathbf{\Pi}$ with the same orientation. By construction of $\SS^\mathbf{\Pi}$, they must be the same edge in the quotient, hence $g\in Stab(H)$. Since $g\in G$ and $H\in\H$ were arbitrary, $\H$ is strongly collapsible.
\end{proof}

\begin{lm}[Collapses between blow-ups are tree-like]
\label{weakblowupgood}
    Let $\overline{\H}$ be a family of hyperplanes in a blow-up $\SS^\mathbf{\Pi}$. Assume that the range of the collapse of $\overline{\H}$ is isomorphic to a blow-up $\SS^{\mathbf{\Pi}'}$ (of the same Salvetti complex $\SS$). Then $\overline{\H}$ is contained in a tree-like family of hyperplanes of $\SS^\mathbf{\Pi}$.
\end{lm}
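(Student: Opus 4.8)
The plan is to enlarge $\overline{\H}$ to a family of hyperplanes of $\SS^\mathbf{\Pi}$ whose collapse recovers the Salvetti complex $\SS$ itself, so that Theorem~\ref{treelikecompatible} identifies that larger family as tree-like.

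The first and main step is to show that the collapse $c\colon \SS^\mathbf{\Pi}\to \SS^{\mathbf{\Pi}'}$ of $\overline{\H}$ is a homotopy equivalence. I first check it is $\pi_1$-surjective. Its cube preimages are connected, being the classes of the equivalence relation of Lemma~\ref{collapse1}, and $c$ is surjective; hence for every edge $e'$ of $\SS^{\mathbf{\Pi}'}$ the subcomplex $c^{-1}(\overline{e'})$ is connected and meets the preimages of both endpoints of $e'$, so it contains a path between them whose $c$-image is homotopic to $e'$ rel endpoints (as $\overline{e'}$ is contractible). Given an edge loop of $\SS^{\mathbf{\Pi}'}$, splicing such lifts of its edges by paths inside the connected preimages of the shared vertices produces a loop of $\SS^\mathbf{\Pi}$ mapping to it up to homotopy. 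Since $\pi_1\SS^\mathbf{\Pi}\cong A_\Gamma\cong \pi_1\SS^{\mathbf{\Pi}'}$ and $A_\Gamma$ is Hopfian (being finitely generated and residually finite), the surjection $c_*$ is an isomorphism; both complexes being aspherical (they are locally $\cat$), the map $c$ is a homotopy equivalence.

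Now compose with the canonical collapse $c_{\mathbf{\Pi}'}\colon \SS^{\mathbf{\Pi}'}\to\SS$, which is itself a collapse of a hyperplane family (it collapses the $\mathbf{\Pi}'$-labelled hyperplanes; see \cite{untwistedos}) and a homotopy equivalence. Thus $c_{\mathbf{\Pi}'}\circ c$ is a homotopy equivalence $\SS^\mathbf{\Pi}\to\SS$, and, being a composition of restriction quotients, it is the collapse of a hyperplane family $\widetilde{\H}$ of $\SS^\mathbf{\Pi}$ containing $\overline{\H}$ --- explicitly, $\widetilde{\H}$ is $\overline{\H}$ together with the hyperplanes of $\SS^\mathbf{\Pi}$ that $c$ identifies with the $\mathbf{\Pi}'$-labelled hyperplanes of $\SS^{\mathbf{\Pi}'}$. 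Since $\SS^\mathbf{\Pi}$ is a special cube complex, every one of its hyperplanes is a carrier retract --- $2$-sidedness and the absence of self-osculation make each carrier an embedded trivial interval bundle over its hyperplane --- so $\widetilde{\H}$ is a family of carrier-retract hyperplanes whose collapse is a homotopy equivalence with range combinatorially isomorphic to $\SS$. By Theorem~\ref{treelikecompatible}, $\widetilde{\H}$ is tree-like, and $\overline{\H}\subseteq\widetilde{\H}$, which is what we wanted.

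The main obstacle is the homotopy-equivalence step: one has to rule out that $c$ kills part of $\pi_1$, for which I use connectedness of cube preimages (yielding $\pi_1$-surjectivity) together with the Hopf property of $A_\Gamma$. A secondary point to watch is the compatibility of the restriction-quotient collapses of Definition~\ref{defcollapse} with the canonical and tree-like collapses of Definition~\ref{defspine}: this is the content of the remark following Definition~\ref{collapses}, by which a homotopy-equivalence collapse of carrier-retract hyperplanes coincides with the collapse in the sense of \cite{untwistedos}, so that Theorem~\ref{treelikecompatible} genuinely applies to $\widetilde{\H}$.
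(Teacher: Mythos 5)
Your overall strategy is the same as the paper's: compose with the canonical collapse $c_{\mathbf{\Pi}'}$ to reduce to a collapse $\SS^{\mathbf{\Pi}}\to\SS$ of an enlarged family $\widetilde{\H}$, check the hypotheses of Theorem~\ref{treelikecompatible}, and conclude. But there is a genuine gap at the step where you verify those hypotheses: the claim that \emph{every} hyperplane of a special cube complex is a carrier retract is false. The paper's notion of special (Definition~\ref{special}, i.e.\ A-special) only excludes \emph{direct} self-osculation; indirect self-osculation is permitted, and it is exactly what can prevent a carrier from embedding. The simplest counterexample is $\SS$ itself (the blow-up with $\mathbf{\Pi}=\emptyset$): every edge is a loop, so the carrier of each hyperplane contains a circle and cannot embed as a product $[0,1]\times Y_H$. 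More generally, in any blow-up the hyperplanes labelled by generators that are not split by any partition are dual to loop edges and are not carrier retracts. Since Theorem~\ref{treelikecompatible} is stated only for families of carrier-retract hyperplanes, you cannot invoke it for $\widetilde{\H}$ without first proving that its members are carrier retracts. This is precisely the non-trivial content of the paper's proof: it supposes some $H\in\overline{\H}$ is not a carrier retract, uses specialness only to extract a closed edge path in the carrier of $H$ crossing $H$ exactly once, observes that the weak-collapse (homotopy-equivalence) hypothesis forbids loops dual to $H$ so that the label of $H$ is a partition or is split by one, and then derives a contradiction by pushing the path down to a single blow-up $\SS^{\mathbf{P}}$. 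None of this is replaced by your appeal to specialness.

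Two smaller remarks. First, in your $\pi_1$-surjectivity argument the closed edge $\overline{e'}$ is \emph{not} contractible when $e'$ is a loop (which happens for most generator-labelled edges of a blow-up); the argument is repairable (pick a single edge of $\SS^{\mathbf{\Pi}}$ mapping onto $e'$, guaranteed by Lemma~\ref{collapse1}(3), and close it up inside the connected vertex preimage), and your Hopfian-plus-asphericity route to the homotopy equivalence is a legitimate alternative to the paper's, which instead observes that the lifted collapse is equivariant between free actions, hence a weak collapse, and applies Lemma~\ref{collapse}. Second, for your enlarged family you should also note that the nullhomotopy condition on $\widetilde{\H}$-cycles (needed even to speak of the CSV collapse of $\widetilde{\H}$) follows from the homotopy equivalence; the paper extracts this from the weak-collapse property. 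But the carrier-retract step is the real missing piece, and it is where the work of the lemma lies.
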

The assumption that the range is isomorphic to a blow-up of $\SS$ is necessary. Indeed, in the single blow-up of $\gen{a,b,c\mid [a,b] = 1}$ corresponding to the partition $\{a,c\}\sqcup \{a^{-1},c^{-1}\}\sqcup \{b,b^{-1}\}$ with basepoint $a$, the family containing only the hyperplane dual to $c$ is weakly collapsible but not tree-like.
\begin{proof}
The collapse of $\overline{\H}$ lifts to the collapse of some $A_\Gamma$-invariant family $\H$ between universal covers. Since this collapse is equivariant and both actions are free, it is a weak collapse. The composition $\SS^\mathbf{\Pi}\to \SS^{\mathbf{\Pi}'}\to \SS$ of the collapse of $\overline{\H}$ and the canonical collapse of hyperplanes labelled $\mathbf{\Pi}'$ is a homotopy equivalence. Thus, up to adjoining $\overline{\H}$ with all hyperplanes of $\SS^\mathbf{\Pi}$ mapping to partition-labelled hyperplanes in $\SS^{\mathbf{\Pi}'}$, assume $\mathbf{\Pi}'$ is empty. The family $\overline{\H}$ still lifts to a weakly collapsible family, and the range of its collapse is isomorphic to the Salvetti complex $\SS$. The goal is to apply Theorem~\ref{treelikecompatible} to prove that $\overline{\H}$ is tree-like.

By the weak collapse assumption, every edge cycle in $\SS^\mathbf{\Pi}$ dual to hyperplanes of $\overline{\H}$ only is nullhomotopic. It only remains to check that hyperplanes of $\overline{\H}$ are carrier retracts in $\SS^\mathbf{\Pi}$, i.e.~have carriers that embed as a product with the dual edge. Assume $H\in\overline{\H}$ is not a carrier retract. By specialness of $\SS^\mathbf{\Pi}$, there exists $\gamma$ a closed edge path in the carrier of $H$ that crosses $H$ exactly once. By the weak collapse assumption, no edge dual to $H$ is a loop, hence the label $l$ of $H$ is either a partition of $\mathbf{\Pi}$ or split by some partition of $\mathbf{\Pi}$. Let $\mathbf{P}\in \mathbf{\Pi}$ be the partition either equal to $l$ or splitting $l$ and note that $lk(l)\subseteq lk(\mathbf{P})$. The image $H'$ of $H$ in $\SS^\mathbf{P}$ under the collapse of $\mathbf{\Pi}\setminus \mathbf{P}$ is still not dual to any loops. The image $\gamma'$ of $\gamma$ under this collapse is a closed path in $\SS^\mathbf{P}$. Yet, all the edges of $\gamma'$ except the unique edge dual to $H'$ have labels in $lk(l)\subseteq lk(\mathbf{P})$. By construction of $\SS^\mathbf{P}$, these edges are loops, and $\gamma'$ cannot be closed, a contradiction.
\end{proof}

\section{Spatial cube complexes}

The graphs involved in the construction of Culler-Vogtmann's outer space for free groups are assumed not to contain vertices of valence $2$, i.e.~are maximally unsubdivided. We describe an analogue using hyperplanes.
\begin{defi}
\label{redundantdef}
    Let $n\geq 1$. The \emph{subdivision} of the standard $n$-cube $[0,1]^n$ along the hyperplane given by the first coordinate is the product cube complex $[0,1]\times [0,1]^{n-1}$ where the first factor has vertex set $\{0,1/2,1\}$ and the second factor is the standard $n-1$-cube. It comes with a cellular homeomorphism from the standard $n$-cube given by the identity map of underlying sets (but its inverse is not cellular).

    Let $X$ be a locally $\cat$ cube complex and $H$ a two-sided, non-self-intersecting hyperplane of $X$. The \emph{subdivision of $X$ along $H$} is the locally $\cat$ cube complex $X'$ obtained by applying the subdivision procedure described above to each cube of $X$ containing one edge dual to $H$, along the hyperplane $H$. There is a cellular homeomorphism $X\to X'$, which is piecewise affine. The complex $X'$ is locally $\cat$. Any edge dual of $X$ dual to $H$ is mapped to a path of length $2$ in $X'$. The two hyperplanes dual to this path are the \emph{images of $H$ in $X'$}. They are distinct because $H$ is two-sided.

    Finally, let $X$ be any locally $\cat$ cube complex. A pair $\{H_1,H_2\}$ of distinct hyperplanes of $X$ is \emph{redundant} if there exists a locally $\cat$ cube complex $Y$ with a two-sided, non-self-intersecting hyperplane $H$, and an isomorphism between $X$ and the subdivision $Y'$ of $Y$ along $H$, mapping $H_1,H_2$ to the images of $H$ in $Y'$. Note that redundant hyperplanes are non-transverse, and have the same set of transverse hyperplanes.
\end{defi}

\begin{rem}
\label{collapseredundant}
    If $\{H_1,H_2\}$ is a redundant pair of non-self-osculating hyperplanes in a locally $\cat$ cube complex $X$, it is clear from the definition and Lemma~\ref{strongdownstairs} that the family of lifts of $H_1$ (resp. of $H_2$) to the universal cover is strongly collapsible. Letting $X\to X_1$, $X\to X_2$ be the corresponding collapses in the quotient, there is an isomorphism $X_1\to X_2$ such that the composition $X\to X_1\to X_2$ is homotopic to the collapse $X\to X_2$.
    
    Moreover, it is clear from the definition and the description in Lemma~\ref{collapse1} that in the range of the collapse of any family of hyperplanes in $X$ disjoint from $\{H_1,H_2\}$, the pair of images of $H_1$ and $H_2$ is still redundant.
\end{rem}
\begin{lm}
\label{blowupnoredundant}
    Let $\mathbf{\Pi}$ be a family of pairwise compatible Whitehead partitions. The corresponding blow-up $\SS^\mathbf{\Pi}$ of $\SS$ has no redundant pair of hyperplanes.
\end{lm}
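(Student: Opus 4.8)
The plan is to argue by contradiction, so suppose $\SS^\mathbf{\Pi}$ has a redundant pair $\{H_1,H_2\}$. By Definition~\ref{redundantdef} there is a locally $\cat$ cube complex $Y$ with a two-sided, non-self-intersecting hyperplane $H$ and an isomorphism $\SS^\mathbf{\Pi}\cong Y'$ onto the subdivision of $Y$ along $H$, sending $H_1,H_2$ to the two images of $H$. Since $\SS^\mathbf{\Pi}$ is a blow-up its hyperplanes are labelled bijectively by $V\sqcup\mathbf{\Pi}$; write $l_1\neq l_2$ for the labels of $H_1,H_2$. Redundant hyperplanes are non-transverse and have the same set of transverse hyperplanes, so by Lemma~\ref{uniquecube} the labels $l_1,l_2$ are non-adjacent in $V\sqcup\mathbf{\Pi}$ and have equal links there. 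The first step is to extract a \emph{midpoint vertex}: $H$ has at least one dual edge in $Y$, hence $Y'$ has at least one midpoint vertex $w$, and unwinding the subdivision procedure of Definition~\ref{redundantdef} one sees that $w$ is incident to exactly one edge dual to $H_1$ and one dual to $H_2$ (not spanning a square, since $H_1,H_2$ are non-transverse), and that $lk(w)$ in $\SS^\mathbf{\Pi}$ is the join $\{p_1,p_2\}\ast L_w$ of the directions $p_1,p_2$ of these two edges with the link $L_w$ of the corresponding vertex in the middle copy of the carrier of $H$. In particular, \emph{inside $lk(w)$ the vertex $p_1$ is adjacent to every link-vertex except $p_2$}: it has exactly one non-neighbour.

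Next I would reduce the range of labels. If $l_1,l_2\in V$, then the canonical collapse $c_\mathbf{\Pi}\colon\SS^\mathbf{\Pi}\to\SS$, which collapses the $\mathbf{\Pi}$-labelled hyperplanes, is the collapse of a family disjoint from $\{H_1,H_2\}$; by Remark~\ref{collapseredundant} the images of $H_1,H_2$ then form a redundant pair in $\SS$, which is absurd since $\SS$ has a single vertex whereas any subdivision $Y'$ has at least one vertex of $Y$ together with at least one new midpoint vertex. So I may assume $l_1=\mathbf{P}\in\mathbf{\Pi}$, and I set $L=lk(\mathbf{P})$. Using the fact (\cite{untwistedos}, Section~3) that forgetting blow-up directions yields a collapse $\SS^\mathbf{\Pi}\to\SS^{\mathbf{\Pi}_0}$ for any $\mathbf{\Pi}_0\subseteq\mathbf{\Pi}$, I collapse all $\mathbf{\Pi}$-labelled hyperplanes except $\mathbf{P}$ (and, if $l_2=\mathbf{Q}\in\mathbf{\Pi}$, except $\mathbf{Q}$ as well); this collapsed family is disjoint from $\{H_1,H_2\}$, so by Remark~\ref{collapseredundant} the images stay a redundant pair. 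Thus it suffices to treat $\mathbf{\Pi}=\{\mathbf{P}\}$ (with $l_2$ a generator) and $\mathbf{\Pi}=\{\mathbf{P},\mathbf{Q}\}$ (with $l_1=\mathbf{P}$, $l_2=\mathbf{Q}$ non-adjacent).

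In these two cases the $1$-skeleton and all vertex links of $\SS^\mathbf{\Pi}$ are given explicitly by the construction of \cite{untwistedos}, Section~3. By Lemma~\ref{uniquecube}, in any vertex link the partition-direction $p_\mathbf{P}$ is adjacent precisely to the link-vertices whose hyperplane is transverse to $H_\mathbf{P}$, i.e.\ to those labelled by generators in $L$ and to no partition-direction other than (possibly) itself. On the other hand the midpoint vertex $w$ is incident to the edge dual to $H_1=H_\mathbf{P}$, hence lies on one side $A$ of $\mathbf{P}$, and the construction places at $w$ a link-vertex labelled $x$ for every $x\in A$ that also lies on $w$'s side of the remaining partition (if any). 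A short combinatorial check now shows that $p_\mathbf{P}$ has at least two non-neighbours in $lk(w)$: in the single-partition case these are the $\lvert A\rvert\geq 2$ link-vertices coming from $A$ (both sides of a Whitehead partition have at least two elements, all disjoint from $L$); in the two-partition case they are $p_\mathbf{Q}$ together with a link-vertex coming from an element of $A\cap B$, where $B$ is $w$'s side of $\mathbf{Q}$, the pair $(A,B)$ being the complement of the unique empty intersection among $P\cap Q,\ P\cap Q^{*},\ P^{*}\cap Q,\ P^{*}\cap Q^{*}$ and hence non-empty. This contradicts the conclusion of the first step, completing the proof. The two delicate points — identifying the local model $lk(w)=\{p_1,p_2\}\ast L_w$ at a midpoint vertex, and the final count of link-vertices at $w$ against the cell structure of \cite{untwistedos} — are where the real work lies; everything else is formal.
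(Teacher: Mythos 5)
Your proposal is correct and follows essentially the same route as the paper: reduce via Remark~\ref{collapseredundant} to the Salvetti, single-blow-up and double-blow-up cases, then derive a contradiction from the local structure at a vertex of the small blow-up (your non-neighbours of $p_\mathbf{P}$ in $lk(w)$ are exactly the paper's triples of pairwise non-square-spanning edges at $v_P$, resp.\ $v_{P^*\cap Q}$). If anything, making the midpoint-vertex criterion $lk(w)=\{p_1,p_2\}\ast L_w$ explicit, and pinning down which vertex must be the midpoint, renders the final contradiction more transparent than the paper's bare edge count.
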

\begin{proof}
    If $\mathbf{\Pi}$ is empty, the result is obvious since $\SS$ has only one vertex. If $\mathbf{\Pi} =\{\mathbf{P}\}$ is a single Whitehead partition, the single blow-up $\SS^\mathbf{P}$ has no redundant pair either. Indeed, since $P$ and $P^*$ have at least two elements each, both vertices of $\SS^\mathbf{P}$ have at least three incident edges that pairwise do not span squares (one edge labelled $\mathbf{P}$, one edge labelled by a basepoint $b$ of $\mathbf{P}$ and one edge labelled by an element of $P\cup P^*\setminus \{b,b^{-1}\}$).
    
    If $\mathbf{\Pi} =\{\mathbf{P,Q}\}$, every pair distinct from the pair $\{H_\mathbf{P}, H_\mathbf{Q}\}$ of hyperplanes labelled $\mathbf{P}$ and $\mathbf{Q}$ is preserved in one of the collapses $\SS^{\mathbf{P}}$, $\SS^\mathbf{Q}$, therefore not redundant by Remark~\ref{collapseredundant}. If $H_\mathbf{P}, H_\mathbf{Q}$ are transverse, they do not have the same set of transverse hyperplanes, therefore are not redundant. If $H_\mathbf{P}, H_\mathbf{Q}$ are not transverse, $\mathbf{P}$ and $\mathbf{Q}$ are not adjacent. Hence, exactly one of the four quadrants, say $P\cap Q^*$ is empty. It is then easily checked that all three vertices of $\SS^{\{\mathbf{P},\mathbf{Q}\}}$ have at least three incident edges that pairwise do not span squares: for the vertex corresponding to $P\cap Q$ (resp. $P^*\cap Q^*$), take edges labelled by $\mathbf{P}$, a basepoint $b$ of $\mathbf{P}$, and an element of $P\setminus \{b\}$ (resp. $\mathbf{Q}$, a basepoint $b'$ of $\mathbf{Q}$ and an element of $Q^*\setminus \{b'^{-1}\}$). Finally, for the vertex corresponding to $P^*\cap Q$, take edges labelled by $\mathbf{P}$, $\mathbf{Q}$ and an element of $P^*\cap Q$. This proves that $\{H_\mathbf{P}, H_\mathbf{Q}\}$ is not redundant.

    Finally, any pair of hyperplanes $\{H_1,H_2\}$ in any blow-up $\SS^\mathbf{\Pi}$ of $\SS$ is preserved in the range of the collapse of all hyperplanes different from $H_1$ and $H_2$, and labelled by $\mathbf{\Pi}$. This collapse is a Salvetti complex, a single blow-up or a double blow-up, hence the images of $H_1$, $H_2$ do not form a redundant pair. By Remark~\ref{collapseredundant}, $\SS^\mathbf{\Pi}$ contains no redundant pair of hyperplanes.
\end{proof}

\begin{defi}
\label{defspatial}
    Consider a free cocompact action of $A_\Gamma$ on a $\cat$ cube complex $Z$ by combinatorial isometries. This action is:
    \begin{itemize}
        \item \emph{Salvetti-like} if the quotient $Z/A_\Gamma$ is combinatorially isomorphic to the Salvetti complex $\SS$, or equivalently if the action is transitive on vertices and cospecial;
        \item \emph{blow-up-like} if $Z/A_\Gamma$ is combinatorially isomorphic to some blow-up $\SS^\mathbf{\Pi}$, where $\mathbf{\Pi}$ is a family of pairwise compatible Whitehead partitions of $V^\pm$;
        \item  \emph{cospatial} if $Z/A_\Gamma$ has no redundant pair of hyperplanes, and for every hyperplane $H$ of $Z$, there exists a strong collapse map with Salvetti-like range that does not collapse $H$.
    \end{itemize}
    A \emph{spatial cube complex} is the quotient of a cospatial action.
\end{defi}

\begin{rem}
\label{blowupspatial}
     Let $\mathbf{\Pi}$ be a family of pairwise compatible Whitehead partitions, and consider the blow-up $\SS^\mathbf{\Pi}$ of $\SS$. For every partition $\mathbf{P}\in \mathbf{\Pi}$, there exists a basepoint $b$ for $\mathbf{P}$ making the family $(\mathbf{\Pi}\setminus \{\mathbf{P}\})\cup \{b\}$ tree-like. Lemma~\ref{blowupstrong} guarantees that tree-like families are strongly collapsible, and by \cite{untwistedos}, Theorem~4.11, the range of such a collapse is combinatorially isomorphic to the Salvetti complex $\SS$. Moreover, $\SS^\mathbf{\Pi}$ has no redundant pair of hyperplanes by Lemma~\ref{blowupnoredundant}. Therefore, every such blow-up is spatial. Proposition~\ref{spatialblowup} will prove the converse.
\end{rem}

\begin{lm}
\label{spatialspecial}
    Every spatial cube complex is special.
\end{lm}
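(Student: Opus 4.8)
The plan is to verify the four defining conditions of a cospecial action (Definition~\ref{special}) for a given cospatial action $A_\Gamma\curvearrowright Z$. The recurring tool is the cospatial hypothesis itself: for each hyperplane $H$ of $Z$ it provides a strong collapse $c\colon Z\to Y$ with Salvetti-like — hence cospecial — range that does not collapse $H$. Since $c$ is a restriction quotient, on vertices it merely forgets the halfspace data of the collapsed family, which gives the following facts for hyperplanes $M,N$ of $Z$ not collapsed by $c$: the map $c$ induces a $G$-equivariant, stabilizer-preserving bijection between the non-collapsed hyperplanes of $Z$ and the hyperplanes of $Y$; a vertex $z$ lies in a chosen halfspace of $M$ if and only if $c(z)$ lies in the matching halfspace of $c(M)$, so (using surjectivity of $c$ on vertices) emptiness of an intersection of chosen halfspaces passes from $Z$ to $Y$, whence non-transversality of non-collapsed hyperplanes passes from $Z$ to $Y$; carrier incidences pass from $Z$ to $Y$; and transversality of two non-collapsed hyperplanes passes from $Z$ to $Y$ because the witnessing square maps onto a square. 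We also use that a free action has no cube inversions (a finite group acting on a cube fixes a point), so strong collapses of $Z$ are defined.

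Three of the four conditions are then immediate. For 2-sidedness: if $g$ stabilizes a hyperplane $H$ of $Z$ and exchanges its halfspaces, pick $c$ not collapsing $H$; then $g$ stabilizes $c(H)$ and exchanges its halfspaces, contradicting 2-sidedness of the cospecial $Y$. Consequently $Z$, and hence (by the same correspondence of hyperplanes and halfspaces) every strong collapse of $Z$, has no hyperplane inversions. For no self-intersection: if $gH$ is transverse to $H$ with $g\notin Stab(H)$, choose $c$ not collapsing $H$; then $c(gH)=g\,c(H)$ is distinct from $c(H)$ and transverse to it in $Y$, impossible. For no direct self-osculation: if the carriers of $H$ and $gH$ meet in $Z$ but $H^{+}\cap gH^{+}=\emptyset$ (say), then the carriers of $c(H)$ and $g\,c(H)$ meet in $Y$ while $c(H)^{+}\cap g\,c(H)^{+}=\emptyset$, again contradicting cospeciality of $Y$.

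The remaining condition, no inter-osculation, is the main obstacle: the direct argument needs a strong collapse preserving \emph{both} hyperplanes of a transverse pair, whereas the hypothesis only preserves one at a time. I overcome this by showing that a strong collapse $c\colon Z\to Y$ with Salvetti-like range not collapsing $H$ cannot collapse any hyperplane $K$ transverse to $H$. Suppose it does: $c$ collapses a $G$-invariant family $\H\ni K$ with $H\notin\H$ and $H,K$ transverse. Put $\H_{0}=\H\setminus(G\cdot K)$; by Lemma~\ref{subfamilystrong} this is strongly collapsible, with collapse $Z\to Z_{1}$, and the induced collapse $Z_{1}\to Y$ is the strong collapse of the single hyperplane orbit $G\cdot K_{1}$, where $K_{1}$ is the image of $K$ and $Y$ is still Salvetti-like. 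Since $\H_{0}$ is disjoint from $\{H,K\}$, the images $H_{1},K_{1}$ in $Z_{1}$ are transverse and distinct, and $H_{1}\notin G\cdot K_{1}$ because $H\notin G\cdot K$. Descending to quotients, $\overline c_{1}\colon Z_{1}/A_\Gamma\to\SS$ collapses only the hyperplane $\overline{K_{1}}$, so its unique vertex preimage $C$ is connected, contains all vertices, and has every edge dual to $\overline{K_{1}}$, while $\overline{H_{1}}$ is a hyperplane not lying in $C$ and transverse to $\overline{K_{1}}$. Now Lemma~\ref{tube} applies to $\overline c_{1}$ — the action on $Z_{1}$ is free, cocompact, and, by the 2-sidedness already established, without hyperplane inversions — and, taking $e$ an edge dual to $\overline{H_{1}}$ and $p$ a geodesic in $C$ joining its endpoints, it gives that every hyperplane transverse to $\overline{H_{1}}$ is transverse to every hyperplane dual to an edge of $p$; but these are all equal to $\overline{K_{1}}$, so $\overline{K_{1}}$, being transverse to $\overline{H_{1}}$, would be transverse to itself — absurd. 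This proves the claim. Finally, given transverse $H,K$ in $Z$ and $g$ with the carriers of $gH$ and $K$ meeting, pick $c$ not collapsing $H$; by the claim it collapses neither $K$ nor $gH$, so $c(H),c(K)$ are transverse in $Y$ and the carriers of $c(gH)=g\,c(H)$ and $c(K)$ meet; no inter-osculation in the cospecial $Y$ forces $c(gH),c(K)$ transverse in $Y$, and since transversality of non-collapsed hyperplanes passes back from $Y$ to $Z$, the hyperplanes $gH$ and $K$ are transverse. Hence the action is cospecial. The one place where the deeper structure of collapses is genuinely used, and the crux of the argument, is ruling out the collapsed-$K$ case via Lemma~\ref{tube}.
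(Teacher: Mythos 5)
Your verification of 2-sidedness, no self-intersection, and no direct self-osculation is correct and follows essentially the paper's route: push the offending configuration through a strong collapse with Salvetti-like range that does not collapse $H$, using that halfspace data, transversality and carrier incidence of non-collapsed hyperplanes transfer along a restriction quotient. The problem is the inter-osculation step. The claim you base it on --- that a strong collapse with Salvetti-like range not collapsing $H$ cannot collapse a hyperplane $K$ transverse to $H$ --- is false. Take $A_\Gamma = \gen{a,b,c\mid [a,b]=1}$ and the single blow-up $\SS^{\mathbf{P}}$ for the partition $\mathbf{P}$ with sides $\{a,c\}$, $\{a^{-1},c^{-1}\}$ and link $\{b,b^{-1}\}$, basepoint $a$ (the example the paper gives right after Lemma~\ref{weakblowupgood}). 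The family consisting of the single hyperplane labelled $a$ is tree-like (it is a spanning tree of the two-vertex graph $\Theta$ of edges whose label has link $\{b,b^{-1}\}$), so by Lemma~\ref{blowupstrong} its lift is strongly collapsible with Salvetti-like range. This collapse does not collapse the hyperplane labelled $b$, yet it collapses the hyperplane labelled $a$, which is transverse to it because $a$ and $b$ are adjacent in $\Gamma$ (Lemma~\ref{uniquecube}).

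The precise point where your derivation of the claim breaks is this: you take $e$ an edge dual to $\overline{H_1}$ and $p$ a geodesic in $C$ joining its endpoints, and you need $p$ to contain at least one edge (necessarily dual to $\overline{K_1}$) to extract a contradiction from the second bullet of Lemma~\ref{tube}. But nothing forces $p$ to be non-empty: $e$ may be a loop of $Z_1/A_\Gamma$, and then Lemma~\ref{tube} says nothing. In the example above every edge labelled $b$ is a loop of $\SS^{\mathbf{P}}$ (edges whose label lies in $lk(\mathbf{P})$ are loops by construction of the blow-up), so this is exactly what happens. Since the claim is false, your final step --- choosing $c$ not collapsing $H$ and arguing that it also preserves $K$ and $gH$ --- cannot be carried out, and no-inter-osculation is not established. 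For comparison, the paper handles this case by a different use of Lemma~\ref{tube}: working in the quotient, it uses the first bullet to homotope the two paths witnessing osculation at $x$ and transversality at $y$ into the vertex preimage $C$ of a single strong collapse, then lifts both paths to the $\cat$ universal cover with common endpoints $\uc{x},\uc{y}$; the lifted hyperplanes are transverse at $\uc{y}$ and both dual to edges at $\uc{x}$, so Lemma~\ref{square} produces a square at $\uc{x}$, contradicting the osculation. That argument never needs the collapse to preserve both hyperplanes, which is precisely the control your approach requires and cannot have.
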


Note that the converse is false: the action of $\ZZ^2 = \gen{(1,1), (-1,1)}$ on $\RR^2$ endowed with its standard cubical subdivision is cospecial, but it is not cospatial.

\begin{proof}
    Let $A_\Gamma$ act cospatially on a $\cat$ cube complex $Z$. For every hyperplane $H$ of $Z$, there exists a strong collapse $c\colon Z\to S$ with $S$ Salvetti-like that does not collapse $H$. Since $A_\Gamma$ acts on $H$ cospecially, if $g\in A_\Gamma$ stabilizes $H$, $g$ does not exchange the halfspaces of $H$, both in $S$ and in $Z$. Moreover, no translate of $H$ is transverse to or directly osculating $H$, both in $S$ and in $Z$ (see Definition~\ref{special}). In particular, $A_\Gamma$ acts without hyperplane inversion.

    Assume the quotient hyperplane $\overline{H}$ is inter-osculating another hyperplane $\overline{K}$ of $Z/A_\Gamma$, and let $C\subseteq Z/A_\Gamma$ be the unique vertex preimage of the collapse. There exists two vertices $x,y$ of $Z/A_\Gamma$ such that $\overline{H}$ and $\overline{K}$ osculate at $x$ and are transverse at $y$. Let $e$, $f$ be the edges dual to $\overline{H}$, $\overline{K}$ at $x$ that witness the osculation. Up to changing $y$, assume that there exists a path $p$ from $x$ to $y$ spanning a product with $e$ and a path $q$ joining $x$ to $y$ spanning a product with $f$. By Lemma~\ref{tube}, every edge of $p$ (resp. $q$) outside $C$ can be replaced by a path in $C$ still spanning a product with edges dual to $\overline{H}$ (resp. $\overline{K}$) with the same endpoints and edges dual to $\overline{H}$ (resp. $\overline{K}$) at those endpoints. In other words, $p$ and $q$ can be assumed contained in $C$. Since $C$ is $\cat$, there exists lifts $\uc{p}$ and $\uc{q}$ of $p$ and $q$ to $Z$ with the same endpoints $\uc{x}$ and $\uc{y}$. By construction, $\uc{p}$ lies in the carrier of a lift $H$ of $\overline{H}$ and $\uc{q}$ lies in the carrier of a lift $K$ of $\overline{K}$. Moreover, $H$ and $K$ are transverse at $\uc{y}$, $H$ is dual to the lift $\uc{e}$ of $e$ at $\uc{x}$ and $K$ is dual to the lift $\uc{f}$ of $f$ at $\uc{x}$. By Lemma~\ref{square}, $\uc{e}$ and $\uc{f}$ span a square at $\uc{x}$, hence $e$ and $f$ span a square at $x$, contradicting the osculation.
\end{proof}

\begin{pro}
\label{spatialblowup}
    Every strong collapse with cospatial domain and Salvetti-like range is isomorphic to a canonical collapse of the form $c_\mathbf{\Pi}\colon \SS^\mathbf{\Pi}\to \SS$. In particular, every cospatial action of $A_\Gamma$ is blow-up-like.
\end{pro}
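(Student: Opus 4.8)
The plan is to take an arbitrary strong collapse $c\colon Z\to S$ with $Z/A_\Gamma$ cospatial and $S$ Salvetti-like, pass to the induced collapse $\bar c\colon Z/A_\Gamma\to\SS$ on quotients, and reconstruct $Z/A_\Gamma$ explicitly as a blow-up. The non-collapsed hyperplanes of $Z/A_\Gamma$ map bijectively onto the hyperplanes of $\SS$, so they inherit a labelling by $V$; writing $\overline{\H}$ for the collapsed family, the task is to attach to each element of $\overline{\H}$ a Whitehead partition of $V^\pm$, check that these partitions are pairwise compatible, and produce a label-respecting combinatorial isomorphism $Z/A_\Gamma\to\SS^\mathbf{\Pi}$ carrying $\bar c$ to a canonical collapse $c_\mathbf{\Pi}$. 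Throughout I would use that $Z$ is special with no hyperplane inversions (Lemma~\ref{spatialspecial}), so that every hyperplane is two-sided with a product carrier, together with the fact --- proved here from Lemma~\ref{square} and the link condition as in Lemma~\ref{uniquecube} --- that two hyperplanes of $Z$ or of $Z/A_\Gamma$ are transverse precisely when their labels are ``adjacent'' in the relevant sense.

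The first and most delicate step is to build the partition attached to a collapsed hyperplane $\overline{H}$. Fixing a lift $H$ in $Z$ and a dual edge $\tilde e$ of $H$, the endpoints of $\tilde e$ lie in a common vertex fibre of $\bar c$, which after translating I call $C:=c^{-1}(s_0)$ --- a finite convex, hence $\cat$, subcomplex by Lemma~\ref{collapse1}, mapping isomorphically to the unique vertex preimage of $\bar c$. The trace of $H$ splits $C$ into halfspaces $C^{+}=C\cap H^{+}$ and $C^{-}=C\cap H^{-}$. The vertices $xs_0$, $x\in V^\pm$, are exactly the neighbours of $s_0$ in $S$, and the second part of Lemma~\ref{tube} lets me replace every non-collapsed edge of a given type $x$ incident to $C$ by a path inside $C$ and put it in product position with a dual edge of $H$; I then declare $x\in P$, $x\in P^*$ or $x\in L$ according to whether such edges sit on the $H^+$ side, on the $H^-$ side, or transversally to $H$. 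That this assignment does not depend on the choices made is where Lemma~\ref{tube} and the convexity of $C$ do the real work. The splitting condition $lk(x)\subseteq L$ for $x\in single(\mathbf{P})$, the non-commutation of elements lying on opposite sides, and the identification $L=lk(b)$ for a suitable basepoint $b$ are then read off from the transversality--adjacency dictionary and Lemma~\ref{tube}; and the hypothesis that $Z/A_\Gamma$ has no redundant pair of hyperplanes forces $|P|,|P^*|\ge 2$ (if $P=\{x\}$ then $\overline{H}$ would be redundant with the hyperplane labelled by the generator underlying $x$) and also guarantees that distinct collapsed hyperplanes yield distinct partitions. This produces a family $\mathbf{\Pi}$ of Whitehead partitions indexed by $\overline{\H}$.

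Next I would check pairwise compatibility: for collapsed hyperplanes $\overline{H},\overline{H}'$ with partitions $\mathbf{P},\mathbf{Q}$, pick lifts $H,H'$ with dual edges in a common fibre $C$. If $H$ and $H'$ are transverse, Lemma~\ref{tube} shows their basepoints are adjacent in $\Gamma$, so $\mathbf{P}$ and $\mathbf{Q}$ are adjacent; otherwise the traces of $H$ and $H'$ in the $\cat$ complex $C$ have nested or disjoint halfspaces, which translates --- via the side-assignment of the previous step --- into exactly one of $P\cap Q$, $P\cap Q^*$, $P^*\cap Q$, $P^*\cap Q^*$ being empty, i.e.\ $\mathbf{P}$ and $\mathbf{Q}$ are compatible. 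Finally, matching vertices of $Z/A_\Gamma$ and of $\SS^\mathbf{\Pi}$ by their sets of incident labels and extending over cubes by the transversality--adjacency dictionary (Lemma~\ref{uniquecube}) produces a label-respecting combinatorial isomorphism $\Phi\colon Z/A_\Gamma\to\SS^\mathbf{\Pi}$; the absence of redundant pairs is exactly what rules out the spurious extra vertices an unneeded subdivision would create, so that $\Phi$ is bijective on vertices. Under $\Phi$ the collapse $\bar c$ becomes a collapse of $\SS^\mathbf{\Pi}$ whose range is combinatorially $\SS$, so its collapsed family is tree-like by Theorem~\ref{treelikecompatible} and $\bar c$ is isomorphic to a canonical collapse $c_\mathbf{\Pi}$; lifting to universal covers gives the statement. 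Since a cospatial action admits such a collapse by definition, it is blow-up-like.

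I expect the main obstacle to be the construction in the second step: verifying that the side-assignment $V^\pm=P\sqcup P^*\sqcup L$ is well defined and genuinely satisfies all four Whitehead-partition axioms. Lemma~\ref{tube} is the essential tool here, since it allows one to push the non-collapsed edges into the single $\cat$ fibre $C$ and thereby carry out every relevant combinatorial verification inside a finite $\cat$ cube complex; specialness of $Z$ and the absence of redundant pairs are both genuinely needed --- the latter precisely to secure the cardinality constraints on the sides and the injectivity of $\overline{\H}\to\mathbf{\Pi}$. Once the partitions and their compatibility are in hand, the construction and verification that $\Phi$ is a combinatorial isomorphism should be routine bookkeeping, closely parallel to the analysis of the cube structure of the Charney--Stambaugh--Vogtmann blow-ups.
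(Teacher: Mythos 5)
Your overall strategy coincides with the paper's: attach to each collapsed hyperplane $\overline{H}$ a partition of $V^\pm$ by sorting the oriented generator-labelled hyperplanes according to which side of the trace of $H$ in the vertex fibre $C$ their edges terminate on, use Lemma~\ref{tube} to control transversality, use the absence of redundant pairs for the cardinality bounds and the injectivity of $\overline{H}\mapsto\mathbf{P}_H$, check pairwise compatibility, and rebuild the quotient as $\SS^{\mathbf{\Pi}}$ skeleton by skeleton. Most of what you defer to ``routine bookkeeping'' is carried out in the paper roughly as you indicate (the vertex correspondence is made precise via the Helly property applied to the chosen halfspaces in $C$). One caveat: the redundancy argument is considerably more delicate than your one-line version suggests --- the redundant pair produced is $\{H_0,K\}$ for $H_0$ a hyperplane of $C$ in the separator $\s(k\mid H^-)$ chosen closest to the carrier of $K$, not necessarily $H$ itself, and one must verify an actual product structure $[0,2]\times k$ immersing in $X$ and rule out extra incident edges --- but the idea is the one you name.

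There is, however, one genuine gap: the existence of a basepoint for $\mathbf{P}_H$. You assert that ``the identification $L=lk(b)$ for a suitable basepoint $b$'' can be read off from the transversality--adjacency dictionary and Lemma~\ref{tube}, but nothing in your argument produces a generator $b$ with $b\in P_H$, $b^{-1}\in P_H^*$ and $lk(b)=L_H$; indeed nothing you invoke rules out $single(\mathbf{P}_H)=\emptyset$, i.e.\ that every generator-labelled hyperplane is either transverse to $H$ or lies entirely on one side of it, in which case $\mathbf{P}_H$ is not a Whitehead partition at all. This is precisely the point where the second clause of Definition~\ref{defspatial} must be used with its full ``for every hyperplane'' quantifier: cospatiality supplies, for this particular $H$, a strong collapse $X\to S'$ with Salvetti-like range that does \emph{not} collapse $H$; a geodesic in the vertex fibre $C'$ of that collapse joining the endpoints of an edge dual to $H$ must cross $H$, the crossing edge cannot lie in $C$ (it would then be dual to $H$, which survives in $S'$), so it is labelled by some $b\in V^\pm$, and applying Lemma~\ref{tube} once to $X\to \SS$ and once to $X\to S'$ yields the two inclusions giving $lk(b)=L_H$. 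Your proposal only ever uses the ``no redundant pairs'' clause of cospatiality together with the existence of a single collapse to a Salvetti-like range, so as written it cannot establish the basepoint axiom; this step needs to be added.
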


\begin{proof}
    Let $\uc{c}\colon \uc{X} \to \uc{S}$ be a strong collapse with $\uc{X}$ cospatial and $\uc{S}$ Salvetti-like. Let $X=\uc{X}/A_\Gamma$, choose an identification $\uc{S}/A_\Gamma\simeq \SS$ and consider the induced collapse in the quotient $c\colon X\to \SS$. Let $C\subseteq X$ be the unique vertex preimage, containing all vertices of $X$. Every hyperplane of $X$ that is not dual to an edge of $C$ has an orientation and label in $V$ pulled back from its image in $\SS$. For each hyperplane $H$ of $X$ dual to an edge of $C$, define a Whitehead partition $\mathbf{P}_H$ as follows:

    By Lemma~\ref{strongdownstairs}, $H$ induces a unique hyperplane of the $\cat$ subspace $C$. Orient the halfspaces of that hyperplane in $C$ arbitrarily as $H^+$, $H^-$. Note that these halfspaces partition the vertex set of $X$. Every element $a$ of $V^\pm$ corresponds to an oriented edge of $\SS$, hence an oriented hyperplane $H_a$ of $X$, not dual to an edge of $C$. Three cases arise:
    \begin{itemize}
        \item If every oriented edge dual to $H_a$ terminates in $H^+$, set $a\in P_H$.
        \item If every oriented edge dual to $H_a$ terminates in $H^-$, set $a\in P_H^*$.
        \item Otherwise, set $a\in L_H$.
    \end{itemize}
    This defines a partition $\mathbf{P}_H$ as $V^\pm = P_H\sqcup P_H^*\sqcup L_H$.

    First, we make the following observation: Let $e$ be any edge of $X$ starting in $H^-$ and terminating in $H^+$, with dual hyperplane $H_e$. We claim that every hyperplane transverse to $H_e$ is transverse to $H$. If $e$ is in $C$, $e$ must be dual to $H$ and the claim holds. If $e$ is not in $C$, let $p$ be a geodesic edge path in $C$ joining the endpoints of $e$. Some edge of $p$ must be dual to $H$, and the claim holds by Lemma~\ref{tube}. Note that in particular, $H$ is not transverse to $H_e$.

    This observation has several consequences. First, assume that $x\in P_H$, $x^{-1}\in P_H^*$, and $y\in lk(x)$. By definition of $\mathbf{P}_H$, any oriented edge dual to $H_x$ starts in $H^-$ and terminates in $H^+$. Moreover, $H_y$ is transverse to $H_x$ because their image hyperplanes in $\SS$ are transverse. Therefore, by the observation, $H_y$ is transverse to $H$, hence $y\in L_H$. This proves that $lk(x)\subseteq L_H$, the second assumption of the definition of a Whitehead partition.

    Next, note that $a\in L_H$ if and only if $H_a$ and $H$ are transverse, in particular $L_H$ is symmetric. Indeed, if $a\in L_H$, there exist two oriented edges $e^+,e^-$ dual to $H_a$ with $e^+$ terminating in $H^+$ and $e^-$ in $H^-$. Up to taking $e^+$ and $e^-$ maximally close in the carrier of $H_a$, they are opposite edges in a square. Let $f$ be the edge of that square joining the terminal endpoints of $e^+$ and $e^-$. Its dual hyperplane $H_f$ is transverse to $H_a$. By the observation, $H_a$ must be transverse to $H$. The reverse implication is clear.

    Furthermore, note that if some edge dual to $H_a$ has endpoints on opposite sides of $H$, $H_a$ and $H$ are not transverse by the observation, hence $a,a^{-1}\notin L_H$. Therefore, $a\in P_H$ and $a^{-1}\in P_H^*$ or vice versa. It is clear that conversely, if $a\in P_H$ and $a^{-1}\in P_H^*$ or vice versa, every edge dual to $H_a$ has endpoints on opposite sides of $H$.

    Now assume that $x\in P_H$, $y\in P_H^*$ are not inverses yet $x$ and $y$ commute. This means that their dual hyperplanes $H_x$ and $H_y$ are transverse. This transversality appears in a square of $X$. Some vertex of that square is both the terminal endpoint of an edge dual to the oriented $H_x$ and an edge dual to the oriented $H_y$. This vertex must then belong $H^+$ and $H^-$, a contradiction. Therefore, the third assumption of the definition of a Whitehead partition holds.

    We need to prove the existence of a basepoint with link $L_H$. By assumption, $X$ is spatial, therefore there exists a strong collapse $X\to S'$ where $S'$ is combinatorially isomorphic to $\SS$ and $H$ is not collapsed. Let $C'\subseteq X$ be the vertex preimage of this collapse. Pick $e$ any edge dual to $H$ and $p$ a geodesic path in $C'$ joining the endpoints of $e$. Some oriented edge of $p$ must start in $H^-$ and terminate in $H^+$. This edge is not in $C$, otherwise it would be dual to $H$, a contradiction because $H$ is not collapsed in $S'$. Therefore this edge is labelled by some $b\in V^{\pm}$. By Lemma~\ref{tube} applied to $e$ in $X\to S'$, every hyperplane transverse to $H$ is transverse to $H_b$. Conversely, by the observation in $X\to S$, every hyperplane transverse to $H_b$ is transverse to $H$. In particular, for every $a\in V^\pm$, $a\in L_H$ if and only if $H_a$ is transverse to $H_b$, if and only if $a\in lk(b)$. Since $b\notin lk(b)=L_H$, $b\in P_H$ and $b^{-1}\in P_H^*$ by construction. Therefore $b$ is a basepoint for $\mathbf{P}_H$.

    Now we make the following claim: the sides of $\mathbf{P}_H$ both have at least two elements, and for every hyperplane $K$ of $C$ distinct from $H$, $\mathbf{P}_K$ is a distinct partition from $\mathbf{P}_H$, even up to exchange of sides.

    The consequence of this claim is that the $\mathbf{P}_H$, for $H$ ranging across all hyperplanes of $C$, form a family of pairwise distinct Whitehead partitions. Assume for the sake of contradiction that the claim does not hold. Then either $(1.)$ some side of $\mathbf{P}_H$, say $P_H$ contains only the unique basepoint $b$ of $\mathbf{P}_H$, or $(2.)$ some hyperplane $K$ of $C$ satisfies, up to changing chosen sides, that $P_H = P_K$, $P^*_H = P^*_K$, and $K$ is in the halfspace $H^+$ of $C$. In the former case, let $K = H_b$ be the (two-sided) hyperplane corresponding to $b$ and orient it according to $b$ by choosing a halfspace $k$ of its carrier contained in $H^+$ (the other halfspace of the carrier is contained in $H^-$ because $b^{-1}\in P^*_H$). In the latter case, let $k$ denote the intersection of $K^-$ with the carrier of $K$, which is contained in $H^+$. The inclusion $k\subseteq H^+$ of convex subcomplexes of $C$ holds in both cases. Hence the separator in $C$, $\s(k\mid H^-)$ contains $H$. Let $H_0$ lie in this separator, maximally close from $k$, so that the carrier of $H_0$ intersects $k$. By definition, $H_0$ and $K$ are not transverse. We will prove that $\{H_0,K\}$ forms a redundant pair of hyperplanes, contradicting the fact that $X$ is spatial.

    First, we prove that $H$, $H_0$ and $K$ have the same set of transverse hyperplanes. It is clear that $H$ and $K$ have the same set of transverse hyperplanes labelled by generators ($L_H = lk(b)$ in Case $(1.)$ and $L_H = L_K$ in Case $(2.)$). Given a hyperplane $H_a$ in this set, both $k$ and the intersection of the carrier of $H$ with $H^-$ contain some edge labeled $a$, while being on opposite halfspaces of $H_0$ in $C$. By definition, $a\in L_{H_0}$, and $H_a$ and $H_0$ are transverse. Conversely, assume $H_a$ is a hyperplane transverse to $H_0$. This means that $a\in L_{H_0}$, i.e.~there exist oriented edges dual to $H_a$ terminating on either halfspace of $H_0$ in $C$. In Case $(1.)$, since one halfspace of $H_0$ is contained in $H^+$, this implies $a\in L_H\cup P_H = L_H\cup \{b\}$. Yet, $H_b = K$ is not transverse to $H_0$, thus $a\neq b$, and $a\in L_H$. This proves that $H_a$ is transverse to $H$ and $K$. In Case $(2.)$, one halfspace of $H_0$ in $C$ is still contained in $H^+$ and the other halfspace is contained in $K^-$. This implies, given $L_H = L_K$ that $a\in L_H\cup (P_H\cap P_K^*)$. Yet, $P_H\cap P_K^* = P_K\cap P_K^* = \emptyset$ by assumption. Thus $a\in L_H$ and $H_a$ is transverse to $H$ and $K$ once again. This proves that $H$, $H_0$ and $K$ all have the same set of transverse hyperplanes labelled by generators.
    
    Now, given a hyperplane $H'$ of $C$, let $b'$ be a basepoint for $\mathbf{P}_{H'}$, chosen as above so that $H'$ and $H_{b'}$ have the same transverse hyperplanes: by the previous argument, $H_{b'}$ is either transverse to all three $H$, $H_0$, and $K$, or to none of them. Thus, the same goes for $H'$, and $H$, $H_0$, $K$ all have the same transverse hyperplanes.

    Using this result and the facts that $H_0$ and $K$ do not self osculate and no pair of hyperplanes of $X$ inter-osculate, we obtain the following fact: the product $[0,2]\times k$ immerses in $X$ with $\{1\}\times k$ mapped to $k$ identically, and every edge corresponding to $[0,1]$ (resp. $[1,2]$) mapped to an edge dual to $H_0$ (resp. $K$). This is proved by a straightforward induction, building immersions of $[0,2]\times A$ for $A$ an increasingly large connected subcomplex of $k$, starting with $A$ equal to a single vertex in the intersection of $k$ and the carrier of $H_0$. Note that the image of $[0,1]\times k$ (resp. $[1,2]\times k$) under the immersion is exactly the carrier of $H_0$ (resp. $K$). 

    Finally, let $e$ be an edge of $X$ incident to $k$ and not contained in the image of the immersion previously defined. This means that the dual hyperplane $H_e$ is neither equal nor transverse to $K$ or $H_0$. Three cases arise:
    \begin{itemize}
        \item If $e$ is not an edge of $C$, then $e$ has a label $a\notin L_H$. The terminal endpoint of $e$ is in $k\subseteq H^+$, hence $a\in P_H$, by definition of the latter. Now in Case $(1.)$, $P_H = \{b\}$, hence $a=b$ and $e$ is dual to $H_b = K$ which contradicts the assumption. In Case $(2.)$, as before, $a\notin L_K$ and the terminal endpoint of $e$ is in $k\subseteq K^-$ hence $a\in P^*_K$. This contradicts the fact that $P_H = P_K$ and $P^*_H = P^*_K$ are disjoint. 
        \item If $e$ is an edge of $C$ and $H_e$ does not separate $H$ and $k$ in $C$, then one halfspace of $H_e$ contains both $H$ and $k$, and the other halfspace is contained in $H^+$ (and $K^-$ in Case $(2.)$). Up to exchanging halfspaces of $H_e$, say $H_e^+\subseteq H^+$. Letting $a$ be a basepoint of $\mathbf{P}_{H_e}$, oriented edges labeled $a$ all terminate in $H_e^+$. Then in Case $(1.)$, $P_H = \{a\}$, contradicting the assumption as before, and in Case $(2.)$, $a\in P_H\cap P^*_K$ also yielding the same contradiction.
        \item If $e$ is an edge of $C$ and $H_e$ separates $H$ and $k$ in $C$, then $H_e$ and $H_0$ must be transverse. Indeed, $H_e$ cannot separate $H_0$ and $k$, by definition of $H_0$, yet the carrier of $H_e$ intersects $k$. This contradicts the assumption that $H_e$ and $H_0$ are not transverse.
    \end{itemize}
    All three cases lead to a contradiction, proving that such an edge $e$ cannot exist. It is then clear that $H_0$ and $K$ are the images of some hyperplane in a subdivision, proving that $\{H_0,K\}$ is a redundant pair. Since $X$ is spatial, it has no redundant pairs: this is a contradiction, and the claim holds.

    \bigskip

    Next, let $H,K$ be distinct hyperplanes dual to edges of $C$. We claim that the Whitehead partitions $\mathbf{P}_H$ and $\mathbf{P}_K$ are compatible.
    
    Two cases arise:
    \begin{itemize}
        \item If at least one of $H^+\cap K^+$, $H^+\cap K^-$, $H^-\cap K^+$ or $H^-\cap K^-$ is empty, the corresponding intersection of sides $P_H\cap P_K$, $P_H\cap P_K^*$, $P_H^*\cap P_K$ or $P_H^*\cap P_K^*$ is empty. Since $\mathbf{P}_H$ and $\mathbf{P}_K$ are distinct even up to switching sides, all three other intersections are non-empty, and $\mathbf{P}_H$ and $\mathbf{P}_K$ are compatible.
        \item Otherwise, there exists a quadruple of points witnessing the transversality of $H$ and $K$ in $C$. Let $b,c$ be basepoints for $\mathbf{P}_H,\mathbf{P}_K$ respectively. Since $H$ is transverse to $K$, $H$ is transverse to $H_c$, thus $H_b$ is transverse to $H_c$. In particular $b\in lk(c)$, thus $\mathbf{P}_H$ and $\mathbf{P}_K$ are adjacent, hence compatible.
    \end{itemize}

    This proves that partitions in the family $\mathbf{\Pi}$ of all Whitehead partitions $\mathbf{P}_H$ for $H$ a hyperplane dual to an edge of $C$ are pairwise compatible.

    \medskip
    
    Our last claim is the existence of a combinatorial isomorphism $\SS^\mathbf{\Pi}\simeq X$ such that the composition $\SS^\mathbf{\Pi}\simeq X\to \SS$ is equal to the canonical collapse $c_\mathbf{\Pi}$. Indeed, write $\mathbf{\Pi} = \{\mathbf{P}_{H_1}\dots \mathbf{P}_{H_k}\}$, and $L_i = lk(\mathbf{P}_{H_i})$. A vertex $z$ of $\SS^\mathbf{\Pi}$ corresponds to a choice of sides $P_{H_i}^\times$ for each $\mathbf{P}_{H_i}$ such that for $i\neq j$, $P_{H_i}^\times$ and $P_{H_j}^\times$ intersect or $\mathbf{P}_{H_i}$ and $\mathbf{P}_{H_j}$ are adjacent. Such a choice of sides determines a choice of one halfspace $H_i^\times$ for each $i$. For $i\neq j$, if $a\in P_{H_i}^\times\cap P_{H_j}^\times$, any terminal vertex of an edge dual to $H_a$ is in $H_i^\times \cap H_j^\times$. Otherwise, $\mathbf{P}_{H_i}$ and $\mathbf{P}_{H_j}$ are adjacent, thus $H_i$ and $H_j$ are transverse and the halfspaces $H_i^\times$ and $H_j^\times$ intersect once again. By the Helly property in the $\cat$ cube complex $C$, the intersection of all chosen halfspaces is non-empty, yet it cannot contain more than one vertex since a halfspace has been chosen for each hyperplane of $C$. Therefore this intersection is a unique vertex, which we set as the image of $z\in \SS^\mathbf{\Pi}$. This yields a map of $0$-skeleta $(\SS^\mathbf{\Pi})^{(0)}\to X^{(0)}$. Conversely, a vertex $z$ of $X$ determines a choice of halfspaces $H_i^\times$ for all hyperplanes dual to edges of $C$, and a corresponding choice of sides $P_{H_i}^\times$ for all partitions of $\mathbf{\Pi}$. For every $i\neq j$, if $H_i$ and $H_j$ are transverse, $\mathbf{P}_{H_i}$ and $\mathbf{P}_{H_j}$ are adjacent. Otherwise, since $H_i^\times$ and $H_j^\times$ intersect, one is contained in the other, say $H_i^\times \subset H_j^\times$. Let $b$ be a basepoint for $H_i$ and set $a=b$ if the chosen halfspace is $H_i^+$, $a=b^{-1}$ if the chosen halfspace is $H_i^-$. By construction, every oriented edge dual to $H_a$ has terminal vertex in $H_i^\times\subset H_j^\times$, therefore $a$ is contained in the intersection of the corresponding sides $P_{H_i}^\times \cap P_{H_j}^\times$. This proves that this choice of sides defines a region, i.e.~a unique vertex of $\SS^\mathbf{\Pi}$, which is the inverse image of $z$. Thus, we constructed a bijection between the $0$-skeleta of $\SS^\mathbf{\Pi}$ and $X$.
    
    It is clear by construction of $\SS^\mathbf{\Pi}$ that two vertices of $\SS^\mathbf{\Pi}$ are joined by an edge labelled $\mathbf{P}_{H_i}$ if and only if their images are joined by an edge dual to $H_i$. This means that the bijection extends as a bijection of $1$-skeleta between the unique vertex preimage of $\SS^\mathbf{\Pi}$ and $C$. Assume two vertices $z_1,z_2$ of $\SS^\mathbf{\Pi}$ are joined by an edge labelled $a\in V^\pm$. This means that for each side $P_{H_i}^\times(z_1)$ chosen for $z_1$ (resp. $P_{H_i}^\times(z_2)$ chosen for $z_2$), $a^{-1}\in P_{H_i}^\times(z_1) \cup L_i$ (resp. $a\in P_{H_i}^\times(z_2) \cup L_i$), and the choice of sides of $\mathbf{P}_{H_i}$ is different for $z_1$ and $z_2$ if and only if $a\in single(\mathbf{P}_{H_i})$. Now let $x_1,x_2\in X$ be the images of $z_1,z_2$ under our bijection and consider any oriented edge $e$ in $X$ dual to $H_a$, with endpoints $y_1,y_2$. For every hyperplane $H_i$ dual to an edge of $C$, if $a\in double(P_{H_i})$ (resp. $double(P_{H_i}^*)$), that side of $\mathbf{P}_{H_i}$ is chosen in the regions both for $z_1$ and $z_2$. Thus, both $x_1$ and $x_2$ are in the halfspace $H_i^+$ (resp. $H_i^-$). By definition of $\mathbf{P}_{H_i}$, both $y_1$ and $y_2$ are in that halfspace as well: $H_i$ does not separate $x_1$ from $y_1$ nor $x_2$ from $y_2$ in $C$. If $a\in P_{H_i}$ and $a^{-1}\in P_{H_i}^*$, the region for $z_1$ has the side $P_{H_i}^*$ and the region for $z_2$ has the side $P_{H_i}$, thus $x_1\in H_i^-$ and $x_2\in H_i^+$. By definition of $\mathbf{P}_{H_i}$ again, $y_1\in H_i^-$ and $y_2\in H_i^+$, thus $H_i$ does not separate $x_1$ from $y_1$ nor $x_2$ from $y_2$ in $C$. The same conclusion holds if $a\in P_{H_i}^*$ and $a^{-1}\in P_{H_i}$. Finally, if $a\in L_i$, $a\notin single(\mathbf{P}_{H_i})$, therefore the same side of $\mathbf{P}_{H_i}$ is chosen for $z_1$ and $z_2$, thus $x_1$ and $x_2$ are on the same halfspace of $H_i$. Moreover, as seen earlier in the proof, $y_1$ and $y_2$ are on the same halfspace of $H_i$ as well. Therefore, $H_i$ separates $x_1$ and $y_1$ if and only if it separates $x_2$ and $y_2$. We proved that the separators in $C$ $\s(x_1\mid y_1)$ and $\s(x_2\mid y_2)$ are equal and contained in the set of hyperplanes $H_i$ with $a\in L_i$, i.e.~that are transverse to $H_a$. Let $p$ be a geodesic in $C$ joining $y_1$ to $x_1$. By specialness of $X$ (Lemma~\ref{spatialspecial}), every edge of $p$ inductively spans a square with an oriented edge dual to $H_a$. In particular, there exists an oriented edge dual to $H_a$ at $x_1$, with terminal endpoint $y_3$. Yet by construction, a hyperplane of $C$ separates $y_3$ from $x_2$ in $C$ if and only if it separates $y_1$ from $x_1$, if and only if it separates $y_2$ from $x_2$. Thus $y_3=y_2$ and there exists an oriented edge (unique by specialness) from $x_1$ to $x_2$ dual to $H_a$, which we set as the image of the edge from $z_1$ to $z_2$ labelled $a$. This provides a map of $1$-skeleta $(\SS^\mathbf{\Pi})^{(1)}\to X^{(1)}$. Its inverse is constructed symmetrically as before, using specialness of $\SS^\mathbf{\Pi}$. This provides an isomorphism of $1$-skeleta respecting the hyperplanes and their labellings.

    In a special cube complex, a length $4$ edge cycle is the attaching cycle of a (unique) square if and only if its opposite edges are in the same hyperplanes, with the correct orientation, and the two hyperplanes are transverse. However, hyperplanes labelled $\mathbf{P}_{H_i}$ and $\mathbf{P}_{H_j}$ in $\SS^\mathbf{\Pi}$ are transverse if and only if $\mathbf{P}_{H_i}$ and $\mathbf{P}_{H_j}$ are adjacent, if and only if $H_i$ and $H_j$ are transverse in $X$. Moreover, hyperplanes labelled $\mathbf{P}_{H_i}$ and $a\in V^\pm$ are transverse in $\SS^\mathbf{\Pi}$ if and only if $a\in L_i$, if and only if $H_a$ is transverse to $H_i$ in $X$. Finally, hyperplanes labelled $v,w\in V$ are transverse in $\SS^\mathbf{\Pi}$ if and only if $v$ and $w$ commute in $A_\Gamma$, if and only if $H_v$ and $H_w$ project to transverse hyperplanes of $\SS$, if and only if $H_v$ and $H_w$ are transverse. Therefore, our isomorphism extends to the $2$-skeleta of $\SS^\mathbf{\Pi}$ and $X$. Since both are locally $\cat$ cube complexes, the link condition allows to extend the isomorphism to the full cube complexes $\SS^\mathbf{\Pi}\to X$, so that $\SS^\mathbf{\Pi}\to X\to \SS$ is the canonical collapse.
    
    Finally, by definition, any cospatial action admits at least one strong collapse to a Salvetti-like action. The previous argument proves then that every cospatial action is blow-up-like.

\end{proof}

Theorem~\ref{treelikecompatible}, Lemmas~\ref{blowupstrong} and \ref{weakblowupgood}, and Proposition~\ref{spatialblowup} yield the following corollaries.

\begin{cor}
\label{strong}
    Any equivariant collapse between cospatial actions of $A_\Gamma$ is strong.
\end{cor}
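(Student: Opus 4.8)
The plan is to reduce the statement to the blow-up setting, where the results of Section~4 apply directly. Let $c\colon Z\to Y$ be an $A_\Gamma$-equivariant collapse between cospatial actions, say the collapse of an $A_\Gamma$-invariant family $\H\subseteq\W$ of hyperplanes of $Z$, and let $\overline{c}\colon Z/A_\Gamma\to Y/A_\Gamma$ be the induced collapse of the quotient family $\overline{\H}=\H/A_\Gamma$.

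First I would apply Proposition~\ref{spatialblowup} to both actions: since they are cospatial, they are blow-up-like, so there are combinatorial isomorphisms $Z/A_\Gamma\cong\SS^\mathbf{\Pi}$ and $Y/A_\Gamma\cong\SS^{\mathbf{\Pi}'}$ for families $\mathbf{\Pi},\mathbf{\Pi}'$ of pairwise compatible Whitehead partitions of $V^\pm$. Under these identifications $\overline{c}$ becomes a collapse of a family of hyperplanes in the blow-up $\SS^\mathbf{\Pi}$ whose range is the blow-up $\SS^{\mathbf{\Pi}'}$ of the same Salvetti complex $\SS$, so the hypothesis of Lemma~\ref{weakblowupgood} is satisfied. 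That lemma (whose proof rests on Theorem~\ref{treelikecompatible}) then shows that $\overline{\H}$ is contained in a tree-like family $\overline{\K}$ of hyperplanes of $\SS^\mathbf{\Pi}$.

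Next, since $A_\Gamma$ acts freely and cocompactly on the simply connected complex $Z$ with quotient $\SS^\mathbf{\Pi}$, we may identify $Z$ with the universal cover $\uc{\SS^\mathbf{\Pi}}$ carrying the deck action of $A_\Gamma=\pi_1\SS^\mathbf{\Pi}$. Let $\K\subseteq\W$ be the $A_\Gamma$-invariant family of all lifts of the hyperplanes of $\overline{\K}$ to $Z$. By Lemma~\ref{blowupstrong}, $\K$ is strongly collapsible. Now $\H\subseteq\K$, so writing $\K=\H\sqcup(\K\setminus\H)$ as a partition into $A_\Gamma$-invariant subfamilies and applying Lemma~\ref{subfamilystrong}, the family $\H$ is itself strongly collapsible. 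Hence $c$, being the collapse of $\H$, is a strong collapse, which is exactly the assertion of the corollary.

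The only steps that require care — but present no real obstacle once Proposition~\ref{spatialblowup} is available — are bookkeeping ones: checking that $\overline{c}$ is genuinely the collapse of a hyperplane family in $\SS^\mathbf{\Pi}$ with range a blow-up of the same $\SS$ (so that Lemma~\ref{weakblowupgood} applies, rather than a collapse to some a priori unrelated complex), and that the family $\K$ of lifts of $\overline{\K}$ is precisely the family to which Lemma~\ref{blowupstrong} attributes strong collapsibility and that it contains $\H$. Both follow from freeness of the action together with the concrete description of collapses and restriction quotients in Lemma~\ref{collapse1} and the remarks following Definition~\ref{collapses}.
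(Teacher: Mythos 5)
Your proposal is correct and follows exactly the route the paper intends: the paper derives Corollary~\ref{strong} by combining Proposition~\ref{spatialblowup} (cospatial implies blow-up-like), Lemma~\ref{weakblowupgood} (the collapsed family sits inside a tree-like family), Lemma~\ref{blowupstrong} (lifts of tree-like families are strongly collapsible), and Lemma~\ref{subfamilystrong} (subfamilies of strongly collapsible families are strongly collapsible). The bookkeeping points you flag at the end are precisely the ones the paper leaves implicit, and your handling of them is fine.
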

\begin{cor}
\label{subtreelike}
    Any marking-preserving collapse between spatial cube complexes is isomorphic to the collapse of a family of hyperplanes labelled by partitions in some blow-up $\SS^\mathbf{\Pi}$ of $\SS$.
\end{cor}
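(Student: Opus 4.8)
The plan is to chain together the four results quoted just above the statement. Let $c\colon X\to X'$ be a marking-preserving collapse between spatial cube complexes, and let $\overline{\H}$ denote the family of hyperplanes of $X$ that $c$ collapses, so that $c$ is the restriction quotient of $X$ associated with the hyperplanes not in $\overline{\H}$. Since $X$ and $X'$ are spatial, they are quotients of cospatial actions of $A_\Gamma$, which by Proposition~\ref{spatialblowup} are blow-up-like; thus $X$ is combinatorially isomorphic to a blow-up $\SS^{\mathbf{\Pi}}$ and $X'$ to a blow-up $\SS^{\mathbf{\Pi}'}$ of the Salvetti complex $\SS$ of $A_\Gamma$. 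With $X\cong\SS^{\mathbf{\Pi}}$ and $X'\cong\SS^{\mathbf{\Pi}'}$ fixed, Lemma~\ref{weakblowupgood} applies to $c$ and produces a tree-like family $\overline{\K}$ of hyperplanes of $\SS^{\mathbf{\Pi}}$ with $\overline{\H}\subseteq\overline{\K}$.

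The main step is then to re-present $X$ as a blow-up in which $\overline{\H}$ becomes a family of partition-labelled hyperplanes. To do this I would collapse the whole family $\overline{\K}$: by Theorem~\ref{treelikecompatible} the collapse $c_{\overline{\K}}\colon X\to\SS$ has range combinatorially isomorphic to $\SS$, and by Lemma~\ref{blowupstrong} it is the quotient of a strong collapse of the universal cover $\uc{X}$, whose $A_\Gamma$-action is cospatial. Since that strong collapse has Salvetti-like range, Proposition~\ref{spatialblowup} yields a combinatorial isomorphism $\varphi\colon\SS^{\widehat{\mathbf{\Pi}}}\to X$ with $c_{\overline{\K}}\circ\varphi$ equal to the canonical collapse $c_{\widehat{\mathbf{\Pi}}}$. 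A hyperplane of $\SS^{\widehat{\mathbf{\Pi}}}$ is collapsed by $c_{\widehat{\mathbf{\Pi}}}$ exactly when it is labelled by a partition of $\widehat{\mathbf{\Pi}}$, and exactly when its $\varphi$-image lies in $\overline{\K}$; hence $\varphi$ identifies the partition-labelled hyperplanes of $\SS^{\widehat{\mathbf{\Pi}}}$ with $\overline{\K}$. Consequently $\varphi^{-1}(\overline{\H})$ is a family of partition-labelled hyperplanes of $\SS^{\widehat{\mathbf{\Pi}}}$, and $c\circ\varphi$ is precisely the collapse of $\varphi^{-1}(\overline{\H})$; thus $c$ is isomorphic to a collapse of partition-labelled hyperplanes in the blow-up $\SS^{\widehat{\mathbf{\Pi}}}$, as required.

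Since every move in this argument is a direct appeal to an already-proved statement, the corollary is essentially a bookkeeping exercise; the only points demanding attention are the verifications that make the cited results applicable, namely that $X$ and $X'$ can be written as blow-ups of one and the same $\SS$ (so that Lemma~\ref{weakblowupgood} applies) and that the strong collapse of $\uc{X}$ underlying $c_{\overline{\K}}$ has cospatial domain and Salvetti-like range (so that Proposition~\ref{spatialblowup} applies to it). The single conceptual input is the observation that collapsing a tree-like family re-exhibits a spatial cube complex as a blow-up whose partition-labelled hyperplanes are exactly that family, which is what upgrades ``$\overline{\H}$ sits inside a tree-like family'' to ``$\overline{\H}$ is partition-labelled in a suitable blow-up''.
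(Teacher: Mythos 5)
Your proposal is correct and follows exactly the chain the paper intends: Proposition~\ref{spatialblowup} to exhibit domain and range as blow-ups, Lemma~\ref{weakblowupgood} to enlarge $\overline{\H}$ to a tree-like family $\overline{\K}$, and then Theorem~\ref{treelikecompatible}, Lemma~\ref{blowupstrong} and a second application of Proposition~\ref{spatialblowup} to re-present the domain as a blow-up $\SS^{\widehat{\mathbf{\Pi}}}$ whose partition-labelled hyperplanes are precisely $\overline{\K}$ (this last step is in effect the paper's Corollary~\ref{renamepartition}). The paper gives no further detail beyond citing these four results, so your write-up is a faithful expansion of its argument.
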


\begin{cor}
\label{renamepartition}
    Let $\overline{\H}$ be a tree-like family of hyperplanes in a blow-up $\SS^\mathbf{\Pi}$. There exists a family $\mathbf{\Pi}'$ of pairwise compatible Whitehead partitions and relabeling of the hyperplanes giving an isomorphism $\SS^\mathbf{\Pi}\to \SS^{\mathbf{\Pi}'}$ mapping $\overline{\H}$ to $\mathbf{\Pi}'$.
\end{cor}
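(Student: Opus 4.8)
The plan is to recognize the tree-like collapse of $\overline{\H}$ as a strong collapse with spatial domain, and then read off the new family $\mathbf{\Pi}'$ from Proposition~\ref{spatialblowup}. First I would set $X:=\SS^\mathbf{\Pi}$: by Remark~\ref{blowupspatial} this is a spatial cube complex, so the standard action of $A_\Gamma$ on its universal cover $\uc{X}$ is cospatial. By Lemma~\ref{blowupstrong} the family of lifts of the hyperplanes of $\overline{\H}$ is strongly collapsible in $\uc{X}$, and the associated strong collapse $\uc{c}\colon\uc{X}\to\uc{\SS}$ descends, in the quotient, to the tree-like collapse $c\colon X\to\SS$ of $\overline{\H}$; by Theorem~\ref{treelikecompatible} its range is combinatorially isomorphic to $\SS$, so $\uc{c}$ has Salvetti-like range.

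Next I would apply Proposition~\ref{spatialblowup} to $\uc{c}$: having cospatial domain and Salvetti-like range, it is isomorphic to a canonical collapse. Concretely, there is a family $\mathbf{\Pi}'$ of pairwise compatible Whitehead partitions and a combinatorial isomorphism $\alpha\colon X\to\SS^{\mathbf{\Pi}'}$ with $c_{\mathbf{\Pi}'}\circ\alpha=c$, up to the identification of the two copies of $\SS$. It then remains to check that $\alpha$ sends $\overline{\H}$ onto the partition-labelled hyperplanes of $\SS^{\mathbf{\Pi}'}$. On the one hand, $c$ collapses exactly $\overline{\H}$ by definition; on the other hand, $c_{\mathbf{\Pi}'}$ is a homotopy equivalence from $\SS^{\mathbf{\Pi}'}$, whose hyperplanes are labelled by $V\sqcup\mathbf{\Pi}'$, onto $\SS$, whose hyperplanes are labelled by $V$, so it collapses precisely the partition-labelled hyperplanes $\mathbf{\Pi}'$ and keeps the generator-labelled ones. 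Finally, a combinatorial isomorphism intertwining two collapses carries the hyperplanes collapsed by one onto those collapsed by the other: for a hyperplane $H$ of $X$ and an edge $e$ dual to $H$, the edge $\alpha(e)$ is dual to $\alpha(H)$ and $c(e)=c_{\mathbf{\Pi}'}(\alpha(e))$, so $H\in\overline{\H}$ if and only if $\alpha(H)$ is collapsed by $c_{\mathbf{\Pi}'}$. Hence $\alpha$ restricts to a bijection from $\overline{\H}$ onto the partition-labelled hyperplanes of $\SS^{\mathbf{\Pi}'}$, and transporting the labelling of $\SS^{\mathbf{\Pi}'}$ back along $\alpha$ is the required relabeling.

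All of the substance is concentrated in Proposition~\ref{spatialblowup}; the only thing that needs care here is the bookkeeping that identifies the collapsed families on the two sides of the isomorphism, together with the observation that a canonical collapse collapses exactly the partition-labelled hyperplanes. I do not expect a genuine obstacle beyond that: the chain \emph{tree-like $\Rightarrow$ strongly collapsible} (Lemma~\ref{blowupstrong}) and \emph{blow-up $\Rightarrow$ spatial} (Remark~\ref{blowupspatial}) is exactly what puts us in a position to invoke Proposition~\ref{spatialblowup} verbatim.
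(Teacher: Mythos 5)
Your proposal is correct and follows exactly the route the paper intends: the paper states this corollary as a direct consequence of Theorem~\ref{treelikecompatible}, Lemma~\ref{blowupstrong}, and Proposition~\ref{spatialblowup}, and your argument is a careful spelling-out of that derivation, including the (correct) bookkeeping that the intertwining isomorphism must carry the family collapsed by $c$ onto the family collapsed by $c_{\mathbf{\Pi}'}$, i.e.\ the partition-labelled hyperplanes.
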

The latter corollary was used implicitly in \cite{twistedos}, Definition~2.16.

\bigskip

In view of Remark~\ref{collapseredundant}, it could be possible in general to have a locally $\cat$ cube complex $X$ marked by $A_\Gamma$ with two distinct marking-preserving strong collapses with ranges isomorphic as marked complexes. However, the following lemma proves that it is not the case for spatial cube complexes.
\begin{lm}
\label{nobigons}
    Let $(X,m)$ be a marked spatial cube complex. Let $c_1\colon (X,m)\to (Y_1,m_1)$, $c_2\colon (X,m)\to (Y_2,m_2)$ be marking-preserving collapse maps with spatial range, collapsing distinct families of hyperplanes of $X$. Then, there is no marking-preserving isomorphism $(Y_1,m_1)\to (Y_2,m_2)$.
\end{lm}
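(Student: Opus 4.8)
The plan is to reduce the statement to a rigidity property of blow-ups and their collapses. By Corollary~\ref{subtreelike}, both $c_1$ and $c_2$ are, up to isomorphism, collapses of subfamilies of partition-labelled hyperplanes in blow-ups of $\SS$. More precisely, choose a strong collapse $X\to\SS$ to a Salvetti-like range (possible since $X$ is spatial), giving by Proposition~\ref{spatialblowup} an identification $X\simeq\SS^{\mathbf{\Pi}}$ under which the vertex preimage $C$ is the maximal subtree spanned by the partition-labelled edges; each hyperplane $H$ of $C$ carries the Whitehead partition $\mathbf{P}_H$ constructed in that proof. Then $c_1$ collapses a subfamily $\overline{\H}_1$ and $c_2$ a subfamily $\overline{\H}_2$ of the partition-labelled hyperplanes, with $\overline{\H}_1\neq\overline{\H}_2$, and the ranges are again blow-ups $Y_i\simeq\SS^{\mathbf{\Pi}_i}$ with $\mathbf{\Pi}_i$ obtained from $\mathbf{\Pi}$ by discarding the partitions labelling $\overline{\H}_i$. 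The key point I would isolate first: the marking $m$ on $X=\SS^{\mathbf{\Pi}}$, via the canonical collapse $c_{\mathbf{\Pi}}\colon\SS^{\mathbf{\Pi}}\to\SS$, equips $X$ with its $V$-labelling on non-partition hyperplanes, and this labelling is preserved by any marking-preserving isomorphism between blow-ups (by \cite{untwistedos}, Corollary~4.13, and the discussion in Definition~\ref{defspine} on the independence of the untwisted condition from the choice of canonical collapse). Consequently a marking-preserving isomorphism $\phi\colon Y_1\to Y_2$ is in particular label-preserving on the $V$-labelled hyperplanes, and matches up the $\mathbf{\Pi}_1$-labelled hyperplanes of $Y_1$ with the $\mathbf{\Pi}_2$-labelled hyperplanes of $Y_2$.

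Next I would show that a marking-preserving isomorphism $\phi\colon Y_1\to Y_2$ together with the two collapse maps produces a marking-preserving self-map of $X$ that is forced to be the identity. Concretely, suppose for contradiction such a $\phi$ exists. Lift everything to universal covers: $\uc{c}_1\colon\uc X\to\uc Y_1$, $\uc{c}_2\colon\uc X\to\uc Y_2$, and $\uc{\phi}\colon\uc Y_1\to\uc Y_2$ an $A_\Gamma$-equivariant isomorphism (after adjusting $\uc{\phi}$ by an inner automorphism so that it is genuinely equivariant for the given action, using that $\phi$ is marking-preserving). Since $X$ is spatial, by Proposition~\ref{spatialblowup} applied to both $Y_1$ and $Y_2$ (which are spatial by hypothesis), each $\mathbf{P}_H$ for $H$ a hyperplane of the vertex preimage of $Y_i$ is determined intrinsically by the action together with the choice of $V$-labelling, exactly as in the proof of that proposition. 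The isomorphism $\uc{\phi}$ carries the Whitehead partition attached to a hyperplane $\overline H$ of the vertex preimage of $\uc Y_1$ to the one attached to $\uc{\phi}(\overline H)$; so $\uc\phi$ induces a bijection between $\mathbf{\Pi}_1$ and $\mathbf{\Pi}_2$ preserving each partition as a partition of $V^\pm$. But $\mathbf{\Pi}_1$ and $\mathbf{\Pi}_2$ are both subsets of the same family $\mathbf{\Pi}$ of pairwise distinct Whitehead partitions of $X$, and distinct elements of $\mathbf{\Pi}$ are distinct as partitions (the claim established in the proof of Proposition~\ref{spatialblowup}). Hence the bijection $\mathbf{\Pi}_1\to\mathbf{\Pi}_2$ must be the identity on the underlying set of partitions, forcing $\mathbf{\Pi}_1=\mathbf{\Pi}_2$ as subsets of $\mathbf{\Pi}$, i.e.~$\overline{\H}_1=\overline{\H}_2$, contradicting the hypothesis that the two collapsed families are distinct.

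The main obstacle I anticipate is making precise the statement that the Whitehead partition $\mathbf{P}_H$ attached to a hyperplane of the vertex preimage depends only on the spatial action and the $V$-labelling, and not on the particular strong collapse to $\SS$ chosen — so that it is genuinely transported by a marking-preserving isomorphism. One has to check that the recipe in the proof of Proposition~\ref{spatialblowup} (orient the halfspaces of $H$ in the $\cat$ vertex preimage $C$; for $a\in V^\pm$ look at where oriented edges dual to $H_a$ terminate) refers only to: the hyperplane $H$ and its halfspaces inside $Z/A_\Gamma$ (equivalently inside $\uc X$), which are intrinsic; and the $V$-labelled hyperplanes $H_a$, which are pinned down by the marking via \cite{untwistedos}, Corollary~4.13. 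Since neither ingredient uses the auxiliary collapse, the assignment $H\mapsto\mathbf{P}_H$ is canonical, and equivariant isomorphisms respect it. Once that is in hand, the rest is the bookkeeping above: reduce to blow-ups via Corollary~\ref{subtreelike}, observe the $V$-labelling is preserved, deduce $\mathbf{\Pi}_1=\mathbf{\Pi}_2$ from the injectivity of $H\mapsto\mathbf{P}_H$ on hyperplanes of $C$, and conclude $\overline{\H}_1=\overline{\H}_2$.
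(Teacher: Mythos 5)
Your reduction breaks down at its first step, and the case it discards is the entire content of the lemma. Corollary~\ref{subtreelike} lets you identify $X$ with a blow-up $\SS^{\mathbf{\Pi}}$ in which the hyperplanes collapsed by $c_1$ are partition-labelled, but there is no reason the hyperplanes collapsed by $c_2$ are partition-labelled \emph{for that same identification}: the ``partition-labelled'' hyperplanes are exactly those dual to edges of the vertex preimage $C$ of a chosen strong collapse $X\to\SS$, and different choices of collapse give different $C$'s. The basic example is $X=\SS^{\mathbf{P}}$ with $c_1$ the canonical collapse of $H_{\mathbf{P}}$ and $c_2$ the collapse of $H_b$ for $b$ a basepoint of $\mathbf{P}$ (both singletons are tree-like by Remark~\ref{blowupspatial} and Theorem~\ref{treelikecompatible}, so both ranges are Salvetti-like, hence spatial). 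Here $\overline{\H}_2=\{H_b\}$ is not contained in the partition-labelled family $\{H_{\mathbf{P}}\}$, so your bookkeeping never gets started; and this ``Whitehead move'' configuration is precisely what must be ruled out, since the two ranges are isomorphic as unmarked complexes and one has to show their markings differ by a non-inner automorphism.

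The same example defeats the intrinsicness claim you yourself flag as the main obstacle. The assignment $H\mapsto\mathbf{P}_H$ in the proof of Proposition~\ref{spatialblowup} uses the oriented hyperplanes $H_a$ ($a\in V^{\pm}$), which are obtained by pulling back labels through a chosen strong collapse $X\to\SS$; they are not determined by the marking alone. In $\SS^{\mathbf{P}}$ the shortest loop in the free homotopy class that $m$ assigns to $b$ is a length-two cycle crossing each of $H_b$ and $H_{\mathbf{P}}$ exactly once, so nothing in the marked complex singles out one of them as ``the hyperplane labelled $b$'': which one plays that role is exactly the choice of collapse. (\cite{untwistedos}, Corollary~4.13 says that changing the collapse changes the induced identification of $\pi_1$ by an untwisted automorphism; it does not say the labelling is preserved.) The paper's proof takes a different, homotopical route: if the isomorphism $\iota$ existed, $\iota_*\circ(c_2)_*\circ(c_1)_*^{-1}$ would be inner and hence preserve lengths of shortest edge cycles in free homotopy classes; after reducing to $Y_1=\SS$ and to $c_2$ collapsing only generator-labelled hyperplanes, this length-preservation forces $single(\mathbf{P})=\{b,b^{-1}\}$ for every $\mathbf{P}\in\mathbf{\Pi}$ and then produces a forbidden square from elements of $double(P)$ and $double(P^*)$. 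Some argument of that kind, exploiting the inner-ness of the induced automorphism rather than a purported rigidity of labels, seems unavoidable here.
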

\begin{proof}
Assume for the sake of contradiction that there exists a marking-preserving isomorphism $\iota \colon (Y_2,m_2)\simeq (Y_1,m_1)$. By Corollary~\ref{subtreelike}, there exists an isomorphism between $X$ and some blow-up $\SS^\mathbf{\Pi}$ of $\SS$ such that all hyperplanes collapsed by $c_1$ are labelled by partitions. Let $d\colon (Y_1,m_1)\to (\SS,m_3)$ be a marking preserving collapse corresponding to the collapse of all remaining hyperplanes labelled by partitions. Up to replacing $c_1$ by $d\circ c_1$ and $c_2$ by $d\circ \iota \circ c_2$, assume $Y_1 = \SS$. Thus, $c_1\colon \SS^\mathbf{\Pi}\to \SS$ is the standard collapse. Up to replacing $X$ with the range of the collapse of all hyperplanes collapsed by $c_2$ labelled by partitions (with a marking making all collapses involved marking-preserving), assume $c_2$ only collapses hyperplanes labelled by generators. The set $\H$ of hyperplanes collapsed by $c_2$ forms a tree-like family by Theorem~\ref{treelikecompatible}. By assumption, $\iota_*\circ (c_2)_*\circ (c_1)_*^{-1}$ is an inner automorphism of $\pi_1Y_1$. In particular, for every edge cycle $\gamma$ in $X$, shortest edge cycles in the homotopy classes of $c_1\circ \gamma$ and $c_2\circ \gamma$ have the same length. Note that by Lemma~\ref{collapse1}, $(2.)$, if $\gamma$ is shortest in its homotopy class, so are $c_1\circ \gamma$ and $c_2\circ \gamma$.  Let $C\subset \SS^\mathbf{\Pi}$ be the $\cat$ subcomplex given by the only vertex preimage under $c_1$.

Let $\mathbf{P}\in \mathbf{\Pi}$ and let $H$ be the corresponding hyperplane of $C$. Let $b\in single(\mathbf{P})$, and let $\gamma$ be a shortest edge cycle in $\SS^\mathbf{\Pi}$ projecting via $c_1$ to the edge loop of $Y_1 = \SS$ labelled by $b$: $\gamma$ has one edge labelled $b$ and all of its remaining edges are labelled by partitions. Since $c_2\circ \gamma$ must have only one edge, $|\gamma| = 2$. Hence, $c_2$ must collapse the edge labelled $b$, and the endpoints of the edge labelled $b$ in $\gamma$ are only separated by $H$ in $C$. By construction of $\SS^\mathbf{\Pi}$, this means that $b$ is not split by any partition in $\mathbf{\Pi}$ other than $\mathbf{P}$.

Now assume $single(\mathbf{P})$ contains $b'\notin \{b,b^{-1}\}$, with, say $b,b'\in P$. Let $\gamma'$ be a shortest edge cycle in $\SS^\mathbf{\Pi}$ projecting via $c_1$ to the length $2$ edge cycle of $Y_1 = \SS$ labelled by $bb'$: $\gamma'$ has one edge $e$ labelled $b$, one edge $e'$ labelled $b'$, and all of its remaining edges are labelled by partitions. Write $\gamma' = epe'p'$, with $p,p'$ only labelled by partitions. Since $c_2\circ \gamma'$ must be of length $2$, $|p| + |p'| = 2$. Since the terminal endpoint of $e$ and the initial endpoint of $e'$ are on opposite halfspaces of $H$ in $C$, $p$ has at least one edge labelled $\mathbf{P}$. Symmetrically, $p'$ has at least one edge labelled $\mathbf{P}$. Hence both $p$ and $p'$ are single edges labelled $\mathbf{P}$. However, by the argument above, $b$ and $b'$ are not split by any partition other than $\mathbf{P}$. Thus, the two endpoints of $e$ are separated only by $H$ in $C$, and the same holds for the two endpoints of $e'$. Therefore, by Lemma~\ref{square}, $p$ joins the endpoints of $e$ and $p'$ joins the endpoints of $e'$. Hence, $e^{-1}e'$ forms a closed length $2$ path in $\SS^\mathbf{\Pi}$, projecting via $c_1$ to a path labelled $b^{-1}b'$ in $\SS$, and via $c_2$ to the trivial path. This contradicts the fact that $c_1$ and $c_2$ are homotopy equivalences. Thus, $single(\mathbf{P})$ contains only two elements: the unique basepoint $b$ of $\mathbf{P}$ and its inverse. This means that the hyperplanes of $\mathbf{\Pi}$ have pairwise distinct unique basepoints. Since $|\H| = |\mathbf{\Pi}|$, $\H$ is exactly the family of hyperplanes labelled by such basepoints.

Then, since $P$ and $P^*$ each have at least two elements, choose any $a\in P\setminus\{b\}$, $c\in P^*\setminus\{b^{-1}\}$. Since $a,c\notin single(\mathbf{P})$, $a\in double(P)$ and $c\in double(P^*)$. Let $\gamma''$ be a shortest edge cycle in $\SS^\mathbf{\Pi}$ projecting via $c_1$ to the length $2$ edge cycle $ac$. By similar arguments as before, $\gamma''$ decomposes as $epe'p'$, where $e$ is labelled $a$, $e'$ is labelled $c$, and $p,p'$ are paths only labelled by partitions, each having one edge labelled $\mathbf{P}$. Since $c_2\circ \gamma''$ has length $2$, $p$ and $p'$ are single edges labelled $\mathbf{P}$ and $a,c$ are collapsed by $c_2$, i.e.~are basepoints of partitions $\mathbf{Q},\mathbf{Q}'\in \mathbf{\Pi}$. These partitions are distinct because they have distinct unique basepoints. As before, the endpoints of $e$ (resp. $e'$) are only separated in $C$ by the hyperplane dual to $\mathbf{Q}$ (resp. $\mathbf{Q}'$). Hence, these endpoints are joined by some edge $e_\mathbf{Q}$ (resp. $e_{\mathbf{Q}'}$) labelled $\mathbf{Q}$ (resp. $\mathbf{Q}'$). The path of length $4$ $e_\mathbf{Q}pe_{\mathbf{Q}'}p'$ is closed in $C$, hence bounds a square by Lemma~\ref{square}. This is a contradiction because the hyperplanes dual to $e_\mathbf{Q}$ and $e_{\mathbf{Q}'}$ are distinct, proving the result.
\end{proof}

\begin{defi}
\label{defuntwisted}
    Let $A_\Gamma$ act freely and cocompactly on a $\cat$ cube complex $Z$ by combinatorial isometries. Consider also the standard action of $A_\Gamma$ on the universal cover $\uc{\SS}$. The action of $A_\Gamma$ is \emph{untwisted}, or \emph{coarse-median-preserving}, when there exists vertices $x\in \uc{\SS}$, $z\in Z$ and a constant $N$ such that for $a,b,c\in A$, the unique $g\in A$ such that $gx = \mu_{\uc{\SS}}(ax,bx,cx)$ satisfies $d_Z(gz, \mu_Z(az,bz,cz))\leq N$. This is equivalent to asking that both actions of $A_\Gamma$ on $Z$ and $\uc{\SS}$ induce the same coarse median structure on $A_\Gamma$.
\end{defi}

\begin{defi}
\label{defcat}
    Let $\mathcal{C}$ be the category whose objects are cospatial untwisted actions of $A_\Gamma$ and whose arrows are given by equivariant collapse maps, or, equivalently, given by strong collapse maps, thanks to Corollary~\ref{strong}.

    Let $\overline{\mathcal{C}}$ be the category of cospatial untwisted actions of $A_\Gamma$ and equivariant collapse maps, considered modulo $A_\Gamma$-equivariant isomorphism (the \emph{skeleton} of $\mathcal{C}$ in the categorical sense). Equivalently, it is the category of spatial cube complexes with coarse-median-preserving markings, and marking-preserving hyperplane collapses, considered modulo marking-preserving isomorphism. By Proposition~\ref{spatialblowup}, Lemma~\ref{nobigons} and the finiteness of the set of blow-ups of $\SS$, $\overline{\mathcal{C}}$ has countably many objects, finitely many arrows with fixed domain (resp. range), and at most one arrow with fixed domain and range.

    Let $\mathcal{N}$ be the \emph{nerve} of $\overline{\mathcal{C}}$ i.e.~the simplicial complex having one $0$-cell per object of $\overline{\mathcal{C}}$, one $1$-cell per non-identity arrow of $\overline{\mathcal{C}}$, and more generally one $k$-simplex per sequence of $k$ composable non-identity arrows of $\overline{\mathcal{C}}$. Equivalently, it is the simplicial complex of the poset of cospatial actions of $A_\Gamma$ (up to $A_\Gamma$-equivariant combinatorial isomorphism) ordered by equivariant collapse maps.
\end{defi}

\begin{thm}
\label{newspineth}
    The group $\unt(A_\Gamma)$ acts on untwisted cospatial actions by precomposition. It acts on $\mathcal{N}$ by combinatorial isomorphisms.

    There exists an $\unt(A_\Gamma)$-equivariant, order-preserving, combinatorial isomorphism between the spine of untwisted outer space $K_\Gamma$ and $\mathcal{N}$.
\end{thm}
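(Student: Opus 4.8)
The plan is to build an explicit bijection $F$ from the vertex set of $K_\Gamma$ onto the vertex set of $\mathcal N$, to check that $F$ and $F^{-1}$ preserve the poset relations and that $F$ is $\unt(A_\Gamma)$-equivariant, and then to conclude: both $K_\Gamma$ and $\mathcal N$ are the simplicial complex associated to the respective poset (equivalently, the nerve of that poset viewed as a category), so an equivariant isomorphism of posets on vertices extends uniquely to an equivariant combinatorial isomorphism. The map $F$ sends a marked blow-up $(\SS^{\mathbf{\Pi}},m)$ to the action $\rho_m$ of $A_\Gamma$ on $\widetilde{\SS^{\mathbf{\Pi}}}$ in which $a$ acts as the deck transformation of $m^{-1}(a)\in\pi_1\SS^{\mathbf{\Pi}}$. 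Since the conditions defining cospatiality (Definition~\ref{defspatial}) refer only to $G$-orbits, $G$-invariant hyperplane families and equivariant collapses, never to a marking, Remark~\ref{blowupspatial} gives that $\rho_m$ is cospatial for every $m$; conversely Proposition~\ref{spatialblowup} shows every cospatial action has a blow-up as its quotient, and the covering data then realizes it as some $\rho_m$, so $F$ is onto. Well-definedness on isomorphism classes and injectivity are routine covering bookkeeping: a marking-preserving combinatorial isomorphism of blow-ups lifts to an $A_\Gamma$-equivariant isomorphism of universal covers, and an $A_\Gamma$-equivariant isomorphism $\rho_m\cong\rho_{m'}$ descends to a marking-preserving isomorphism, the translation ``marking-preserving $\leftrightarrow$ $A_\Gamma$-equivariant'' being exactly the relation $m=m'\circ(\,\cdot\,)_*$, up to inner automorphisms, which is harmless at the level of classes.

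The substantial step is to match the two notions of being untwisted: I would show $\rho_m$ is coarse-median-preserving (Definition~\ref{defuntwisted}) if and only if $m$ is an untwisted marking, i.e.\ $[m\circ(c_{\mathbf{\Pi}})_*^{-1}]\in\unt(A_\Gamma)$. By Remark~\ref{blowupspatial} there is a tree-like family whose collapse $t\colon\SS^{\mathbf{\Pi}}\to\SS$ is a strong collapse (Lemma~\ref{blowupstrong}); its induced map on $\pi_1$ agrees with $(c_{\mathbf{\Pi}})_*$ up to an inner automorphism, and untwistedness of a marking depends only on the outer class, so I may use $t$ in place of $c_{\mathbf{\Pi}}$. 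Its lift $\widetilde t\colon\widetilde{\SS^{\mathbf{\Pi}}}\to\widetilde\SS$ is a collapse with finitely many hyperplane orbits, hence a quasi-isometry (Lemma~\ref{quasiisometry}), preserving medians exactly (Lemma~\ref{collapse1}(2)), and it intertwines $\rho_m$ with the twist $\sigma\circ(m\circ t_*^{-1})^{-1}$ of the standard action $\sigma$ of $A_\Gamma$ on $\widetilde\SS$. Transporting the basepoint of Definition~\ref{defuntwisted} through $\widetilde t$ and using both quasi-isometry inequalities of Lemma~\ref{quasiisometry} shows that $\rho_m$ is coarse-median-preserving if and only if the action $\sigma\circ(m\circ t_*^{-1})^{-1}$ on $\widetilde\SS$ induces the standard coarse median structure on $A_\Gamma$; by Fioravanti's Theorem~\ref{CMP} this last condition is $[(m\circ t_*^{-1})^{-1}]\in\unt(A_\Gamma)$, equivalently $[m\circ(c_{\mathbf{\Pi}})_*^{-1}]\in\unt(A_\Gamma)$ since $\unt(A_\Gamma)$ is a subgroup. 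Hence $F$ restricts to a bijection $K_\Gamma^{(0)}\to\mathcal N^{(0)}$.

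For the poset relations I would argue that $(\SS^{\mathbf{\Pi}},m)$ dominates $(\SS^{\mathbf{\Pi}'},m')$ in $K_\Gamma$ exactly when there is a marking-preserving hyperplane collapse $\SS^{\mathbf{\Pi}}\to\SS^{\mathbf{\Pi}'}$, which lifts to an $A_\Gamma$-equivariant hyperplane collapse $\rho_m\to\rho_{m'}$ — again the marking-preserving condition is precisely equivariance of the lift — which is strong by Corollary~\ref{strong}, hence an arrow of $\overline{\mathcal C}$; conversely any $A_\Gamma$-equivariant collapse between these actions descends (by Corollary~\ref{subtreelike} it collapses partition-labelled hyperplanes in a blow-up) to a marking-preserving collapse. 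Thus $F$ and $F^{-1}$ are order-preserving and $F$ is the desired combinatorial isomorphism. Finally, for equivariance: $[\psi]\in\unt(A_\Gamma)$ acts on $K_\Gamma^{(0)}$ by $m\mapsto\psi\circ m$, and one computes $\rho_{\psi\circ m}=\rho_m\circ\psi^{-1}$; letting $[\psi]$ act on $\mathcal N^{(0)}$ by $\rho\mapsto\rho\circ\psi^{-1}$ (the inversion makes it a left action — this is the ``precomposition'' of the statement) gives a well-defined action, because cospatiality is insensitive to retwisting, untwistedness is preserved as $\unt(A_\Gamma)$ is a subgroup, and retwisting by an inner automorphism $\mathrm{ad}_h$ is realized by the equivariant isomorphism $\rho(h)$; this action visibly preserves the collapse order (a collapse $\rho_m\to\rho_{m'}$ is equally equivariant for $\rho_m\circ\psi^{-1}$, $\rho_{m'}\circ\psi^{-1}$), so $\unt(A_\Gamma)$ acts on $\mathcal N$ by combinatorial isomorphisms, and $F$ intertwines the two actions by the identity $\rho_{\psi\circ m}=\rho_m\circ\psi^{-1}$.

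The hard part is the second paragraph. CSV's notion of an untwisted marking is expressed through the canonical collapse and the Laurence--Servatius generating set defining $\unt(A_\Gamma)$, while our notion is coarse-median preservation of the $A_\Gamma$-action on the cube complex; reconciling the two requires Fioravanti's characterization (Theorem~\ref{CMP}) together with the fact that a tree-like collapse lifts to an equivariant, median-preserving quasi-isometry (Lemmas~\ref{blowupstrong}, \ref{collapse1} and~\ref{quasiisometry}). Everything else is the dictionary between marked blow-ups and cospatial actions established in Sections~3 and~4, applied with the usual care about basepoints and inner automorphisms.
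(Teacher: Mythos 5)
Your proposal is correct and follows essentially the same route as the paper: both reduce the matching of the two notions of ``untwisted'' to the Salvetti level via a strong (tree-like) collapse, using that such a collapse is a median-preserving quasi-isometry (Lemmas~\ref{collapse1} and~\ref{quasiisometry}) together with Fioravanti's characterization (Theorem~\ref{CMP}), and both identify the posets through the dictionary of Lemmas~\ref{blowupstrong}, \ref{weakblowupgood} and Proposition~\ref{spatialblowup}. The only quibble is your claim that a tree-like collapse agrees with $(c_{\mathbf{\Pi}})_*$ up to an \emph{inner} automorphism --- in general they differ by an untwisted (Whitehead-type) automorphism --- but this is harmless since, as you note, $\unt(A_\Gamma)$ is a subgroup.
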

In particular, using the results of \cite{untwistedos}, $\mathcal{N}$ is finite-dimensional and contractible, and the action of $\unt(A_\Gamma)$ on $\mathcal{N}$ is properly discontinuous and cocompact.

\begin{proof}
    Let $A_\Gamma$ act on $Z$ cospatially, and consider a strong collapse $c\colon Z\to S$ to a Salvetti-like action. Since $Z$ has finitely many orbits of hyperplanes, $c$ is quasi-isometric and preserves medians (Lemmas~\ref{collapse1} and \ref{quasiisometry}). Thus, the action on $S$ is untwisted if and only if the action on $Z$ is.
    
    Moreover, fixing a combinatorial isomorphism $\iota \colon S/A_\Gamma \to \SS$ and a vertex $x\in S$ induces a map $A_\Gamma \to A_\Gamma\cdot x \to \pi_1(S/A_\Gamma) \to \pi_1\SS = A_\Gamma$ which is an automorphism $\varphi$ of $A_\Gamma$. The action of $A_\Gamma$ on $S$ is then equivariantly isomorphic to the standard action of $A_\Gamma$ on $\SS$ precomposed with $\varphi$. By Theorem~\ref{CMP}, the action on $S$ is untwisted if and only if $[\varphi]\in \unt(A_\Gamma)$.

    Then, letting $[\psi]\in \unt(A_\Gamma)$, precomposing the action on $S$ by $\psi$ is equivariantly isomorphic to precomposing the action on $\SS$ by $\psi\circ\varphi$. Finally, the actions on $Z$ and $S$ precomposed by $\psi$ still differ by the equivariant strong collapse $c$. This proves that the action on $Z$ precomposed by $\psi$ is untwisted if and only if the original action on $Z$ was untwisted. This action is clearly cospatial and does not depend on the representative of $[\psi]$, up to equivariant automorphism of $Z$. Therefore, $\unt(A_\Gamma)$ acts on the vertices of $\mathcal{N}$ and the action is adjacency-preserving, i.e.~extends to an action on $\mathcal{N}$ by combinatorial isomorphisms.
    
    By Lemma~\ref{blowupstrong}, Lemma~\ref{weakblowupgood}, Proposition~\ref{spatialblowup}, and Remark~\ref{blowupspatial}, an action of $A_\Gamma$ is blow-up-like if and only if it is cospatial, and a map between cospatial actions is an equivariant collapse if and only if it lifts the collapse of a subfamily of a tree-like family of hyperplanes in a blow-up of $\SS$. Finally, it is clear that untwisted markings on the Salvetti complex in the sense of \cite{untwistedos} correspond to precompositions of the action on $\uc{\SS}$ by elements of $\unt(A_\Gamma)$, i.e.~untwisted Salvetti-like actions. This proves that the poset defining $K_\Gamma$ is $\unt(A_\Gamma)$-equivariantly isomorphic to the poset defining $\mathcal{N}$.
\end{proof}

\bibliographystyle{plain}
\bibliography{Spatial_cube_complexes}

\begin{thebibliography}{10}

\bibitem{twistedos}
Corey Bregman, Ruth Charney, and Karen Vogtmann.
\newblock Outer space for {RAAGs}.
\newblock {\em Duke Mathematical Journal}, 172(6):1033--1108, April 2023.

\bibitem{bridsonhaefliger}
Martin~R. Bridson and André Haefliger.
\newblock {\em Metric {Spaces} of {Non}-{Positive} {Curvature}}, volume 319 of
  {\em Grundlehren der mathematischen {Wissenschaften}}.
\newblock Springer Berlin Heidelberg, Berlin, Heidelberg, 1999.

\bibitem{rankrigidity}
Pierre-Emmanuel Caprace and Michah Sageev.
\newblock Rank {Rigidity} for {Cat}(0) {Cube} {Complexes}.
\newblock {\em Geometric and Functional Analysis}, 21(4):851, August 2011.

\bibitem{introraags}
Ruth Charney.
\newblock An introduction to right-angled {Artin} groups.
\newblock {\em Geometriae Dedicata}, 125(1):141--158, March 2007.

\bibitem{untwistedos}
Ruth Charney, Nathaniel Stambaugh, and Karen Vogtmann.
\newblock Outer space for untwisted automorphisms of right-angled {Artin}
  groups.
\newblock {\em Geometry \& Topology}, 21(2):1131--1178, March 2017.

\bibitem{median}
Victor Chepoi.
\newblock Graphs of {Some} {CAT}(0) {Complexes}.
\newblock {\em Advances in Applied Mathematics}, 24(2):125--179, February 2000.

\bibitem{cullervogtmann}
Marc Culler and Karen Vogtmann.
\newblock Moduli of graphs and automorphisms of free groups.
\newblock {\em Inventiones mathematicae}, 84(1):91--119, February 1986.

\bibitem{daypresentation}
Matthew~B. Day.
\newblock Peak reduction and finite presentations for automorphism groups of
  right-angled {Artin} groups.
\newblock {\em Geometry \& Topology}, 13(2):817--855, January 2009.

\bibitem{coarsemedian}
Elia Fioravanti.
\newblock Coarse-median preserving automorphisms.
\newblock {\em Geometry \& Topology}, 28(1):161--266, February 2024.

\bibitem{genevoisbook}
Anthony Genevois.
\newblock {\em Algebraic properties of groups acting on CAT(0) cube complexes}.
\newblock in preparation, 2024.
\newblock https://sites.google.com/view/agenevois/books.

\bibitem{gromov}
M.~Gromov.
\newblock Hyperbolic {Groups}.
\newblock In S.~M. Gersten, editor, {\em Essays in {Group} {Theory}}, pages
  75--263. Springer, New York, NY, 1987.

\bibitem{wallspace}
Frederic Haglund and Frederic Paulin.
\newblock Simplicité de groupes d'automorphismes d'espaces à courbure
  négative.
\newblock {\em The Epstein Birthday Schrift}, pages 181--248, December 1998.

\bibitem{special}
Frédéric Haglund and Daniel~T. Wise.
\newblock Special {Cube} {Complexes}.
\newblock {\em Geometric and Functional Analysis}, 17(5):1551--1620, January
  2008.

\bibitem{laurence}
Michael~R. Laurence.
\newblock A {Generating} {Set} for the {Automorphism} {Group} of a {Graph}
  {Group}.
\newblock {\em Journal of the London Mathematical Society}, 52(2):318--334,
  October 1995.

\bibitem{ccc}
Michah Sageev.
\newblock Ends of {Group} {Pairs} and {Non}-{Positively} {Curved} {Cube}
  {Complexes}.
\newblock {\em Proceedings of the London Mathematical Society},
  s3-71(3):585--617, 1995.

\bibitem{servatius}
Herman Servatius.
\newblock Automorphisms of graph groups.
\newblock {\em Journal of Algebra}, 126(1):34--60, October 1989.

\bibitem{richestoraags}
Daniel~T Wise.
\newblock From {Riches} to {Raags}: 3-{Manifolds}, {Right}-{Angled} {ArGroups},
  and {Cubical} {Geometry}.
\newblock {\em American Mathematical Society}, September 2022.

\end{thebibliography}

\end{document}